\begin{document}

 \makeatletter
 	\newcommand{\sumprime}{\if@display\sideset{}{'}\sum%
 		\else\sum'\fi}
 	\makeatother
 	
 	\numberwithin{equation}{section}
 	
 	\newtheorem{theorem}{Theorem}[section]
 	\newtheorem{proposition}[theorem]{Proposition}
 	\newtheorem{corollary}[theorem]{Corollary}
 	\newtheorem{lemma}[theorem]{Lemma}

 	\theoremstyle{definition}
 	\newtheorem{definition}[theorem]{Definition}

 	\newtheorem{remark}[theorem]{Remark}
 	\newtheorem{question}{Question}
 	\def\thequestion{\unskip}
 	\newtheorem{example}{Example}
 	\def\theexample{\unskip}
 	\newtheorem{problem}{Problem}

\newcommand\bound{{\rm b}\,}
\newcommand\CC{{\mathbf C}}
\newcommand\Chi{\mathcal X}
\newcommand\dist{{\rm dist}\,}
\newcommand{\db}{\ensuremath{\overline\partial}} 
\newcommand\Dom {{\rm Dom\,}}
\newcommand\eps{\varepsilon}
\newcommand\Ker{{\rm Ker\,}}
\newcommand\length{{\ell}\,}
\newcommand\NN{{\mathbf N}}
\def\p{\partial}
\newcommand\Range{{\rm Range\,}}
\newcommand\RR{{\mathbf R}}
\newcommand\T{{\mathbf T}}
\newcommand\ZZ{{\mathbf Z}}

\title[Extendability and the $\db$ operator on the Hartogs triangle]
{Extendability and the $\db$ operator \\ on the Hartogs triangle} 
\author{Almut Burchard, Joshua Flynn, 
Guozhen Lu and Mei-Chi Shaw}
\address{Department of Mathematics, University of Toronto, Toronto, Ontario, Canada M5S 2E4}
 	\email{almut@math.toronto.edu}
	\address{Department of Mathematics, University of Connecticut, Storrs, Connecticut 06290}
 	\email{JOSHUA.FLYNN@UCONN.EDU}
	\address{Department of Mathematics, University of Connecticut, Storrs, Connecticut 06290}
 	\email{guozhen.lu@uconn.edu}
 	\address{Department of Mathematics, University of Notre Dame, Notre Dame, IN 46556}
 	\email{Mei-Chi.Shaw.1@nd.edu} 
 
\thanks
 	{A. Burchard  is partially supported by an NSERC discovery grant.  G. Lu and J. Flynn  are partially supported by the Simons foundation.
	M.-C. Shaw  is partially supported  by NSF grants.
 A. Burchard and M.-C. Shaw would like to  thank
the Banff International Research Station 
for  its kind  hospitality during a 2019 workshop  
which facilitated  this collaboration. We would also like to thank Christine Laurent-Thi\'ebaut for her helpful comments.
}

\begin{abstract} 
  In this paper it is shown that the Hartogs triangle $\T$ in $\CC^2$ is a uniform domain.
  This implies that the Hartogs triangle is a Sobolev extension domain.
  Furthermore, the weak and strong maximal extensions of the Cauchy-Riemann operator agree on the Hartogs triangle.
  These results have numerous  applications. Among other things, they  are used  to study the Dolbeault 
  cohomology groups with Sobolev coefficients 
  on the complement of $\T$.  
\end{abstract} 	


\newpage 
\maketitle

 \noindent{{\sc Mathematics Subject Classification} (2020): 32W05, 35N15.}
 
\section{\bf Introduction} 

  The Hartogs triangle $\T=\{(z,w)\in \CC^2 \mid |z|<|w|<1\}$  
is an important example in several complex variables. 
It is biholomorphic to the product of the 
unit disc with the punctured disc, hence a pseudoconvex 
domain and also a domain of holomorphy. 
However, it admits 
neither a Stein neighborhood basis nor a bounded plurisubharmonic 
exhaustion function.   The Hartogs triangle
plays an important role in our understanding of 
function theory for pseudoconvex domains (see the survey paper \cite{Shaw15}). 
There has been considerable interest in
the Bergman projection (see \cite{ChakrabartiShaw13, Shaw15})  
and the $\db$ problem on the Hartogs  triangle   
 (see e.g.,  \cite{ChakrabartiShaw13, ChakrabartiZeytuncu16, ChenMcNeal20, CC,  GGLV20,   
LaurentShaw13, LaurentShaw19, Sibony}), 
but  many fundamental  questions remain to be answered 
for this important model domain.

The Hartogs triangle is not a Lipschitz domain since
it is not the graph of any function near $(0,0)$. 
This presents  a substantial obstacle to the study of function 
theory on $\T$.  In this paper we show that $\T$ enjoys a number of 
properties generally associated with Lipschitz 
domains (see \cite{Stein70,Evans-Gariepy}).  Our first result
is that the Hartogs triangle is an  extension domain 
for  Sobolev spaces (Theorem \ref{th:extension}).
Consequently, smooth functions on $\CC^2$ are dense in 
the Sobolev spaces on $\T$, and the Sobolev embedding theorems hold.

Our main result concerns the Cauchy-Riemann operator
on the Hartogs triangle.  A fundamental tool for solving
the $\db$-problem for forms on any pseudoconvex domain
is H\"ormander's $L^2$ existence theory, where
the $\db$ operator is  defined in the weak maximal sense.
Specifically, for every weakly closed form $f$
on $\T$ with coefficients in $L^2$, the 
equation $\db u=f$ admits a weak solution in $L^2$.
The $L^2$-theory does not easily yield
information about the regularity of $u$,
even when $f$ is smooth up to the boundary of the domain.

There is another closed extension of the Cauchy-Riemann operator,
known as the strong maximal extension~$\db_s$ 
(see Definition \ref{de:dbar}) which is the 
closure of forms smooth up to the boundary 
in the $L^2$ graph norm.  The strong maximal extension
was used  by Kohn \cite{Kohn63, Kohn64} and 
Morrey \cite{Morrey66} in their approach to 
the $\db$-Neumann problem. If the domain is smooth and 
strongly pseudoconvex, and $f$ is $\db$-closed and  smooth up to the boundary, 
the Morrey-Kohn approach yields solutions that
are   smooth up to the boundary.  The strong extension $\db_s$ 
and its dual operator have many applications.
For  bounded domains with 
Lipschitz boundary, the equality of $\db$ and $\db_s$
was proved by H\"ormander 
from the Friedrichs lemma (see  \cite[Chapter 1]{Hormander65}). 

In this paper we prove that the weak and strong maximal
extensions agree on the Hartogs triangle. This is a first step
towards understanding the regularity of
solutions of the $\db$-problem on~$\T$. 
Since the Hartogs triangle is not a Lipschitz domain, 
the classical Friedrichs lemma does not apply, and
the relationship between the weak and strong extensions
of $\db$ is more subtle. 
Our results are based on the $L^2$ Serre duality that
relates the $\db$-Neumann problem to 
the $\db$-Cauchy problem.

\smallskip
 The plan of the paper is as follows: In Section 2, we 
study the Sobolev space $W^1(\T)$, consisting of $L^2$ 
functions with weak derivatives in $L^2$.
In the past few decades, there  has been tremendous 
progress in harmonic analysis on domains which are 
not Lipschitz, yet share some of their properties
(see   \cite{JK1982}  or the more recent paper \cite{ChordArc2017}).  
We show   that the Hartogs 
triangle is such a domain. Specifically we  prove 
(Theorem \ref{th:uniform}) that the Hartogs triangle 
is  a uniform domain in the sense 
of \cite{GO1979, MS79}. By a result of Jones~\cite{Jones81}, 
any uniform domain is a Sobolev extension domain. 
It follows that smooth functions are dense in 
the Sobolev spaces $W^k(\T)$ (Corollary~\ref{co:Sobolev}),
which in turn yields the Rellich compactness lemma and 
the Poincar\'e inequality on $W^1(\T)$.
Furthermore, we show that the Hartogs triangle has 
Ahlfors-David regular boundary
(Lemma~\ref{le:ADR}), and  the trace theorem  holds 
(Corollary \ref{co:trace}).  
These properties are used 
in our study of the $\db$ operator in later sections.

Section~3 is dedicated to the proof that
$\db=\db_s$ on $\T$ (Theorem~\ref{th:W=S}).
We take advantage of a number of recent
results on the properties of $\db_s$ on $\T$.
Following~\cite{LaurentShaw13}, we use $L^2$ Serre duality to
relate $\db$ and $\db_s$ to two
other closures of $\db$, namely the strong minimal 
closure $\db_c$,
and the weak minimal closure $\db_{\tilde c}$ 
(see Definition \ref{de:dbar c}). 
It was proved in~\cite{LaurentShaw13}  that 
for a rectifiable domain, the dual of $\db_s$ is  
$\db_{\tilde c}$.
Solving $\db_{\tilde c}$ amounts to solving 
$\db$ with prescribed support.
This is known as the $\db$-Cauchy problem and has 
numerous applications (see \cite{Shaw10, ChenShaw01}).  

The results from Section~2  are used 
to analyze the operators $\db_s$ and $\db_{\tilde c}$
on functions. It turns out that the kernel of $\db_s$ 
equals the kernel of $\db$, given by
the Bergman space of square integrable harmonic functions
on $\T$ (Proposition \ref{prop:Ker}).
In the proof, we explicitly estimate the terms
in the Laurent expansion on the Bergman space
from~\cite{ChakrabartiShaw13, Shaw15}.
Furthermore, $\db_c$ and $\db_{\tilde c}$ agree
on functions (Proposition~\ref{prop:W=S 2}). 

For $(0,1)$-forms, it is difficult to prove directly
that the kernel of $\db$ and kernel  $\db_s$  are the same 
since no simple Laurent expansion is available.
Instead, we use duality to show that
$\db_s$ has closed range (Theorem  \ref{th:closed strong}) and the relevant cohomology group is trivial.
Using duality once more, we conclude that 
$\db_s=\db$ (see Theorem~\ref{th:W=S}).

 In Section 4, we  study   solutions of $\db$ on  an  annular
domain between a pseudoconvex domain and the Hartogs triangle $\T$. 
Using the Sobolev extension theorem, we prove  that 
the $W^1$ Dolbeault cohomology on the annulus is isomorphic to the Bergman space on $\T$ (see Theorem \ref{th:W1 L2}).
This is in contrast with  the non-Hausdorff property   for 
the classical Dolbeault cohomology group for (0,1)-forms 
on the annulus  between a pseudoconvex  and the Hartogs
triangle   obtained earlier (see \cite{Trapani86} or \cite{LaurentShaw13}). 

There  remain many  open problems related to the Hartogs triangle 
which are  yet to be explored.  In Section 5 we 
present a number of problems.
It is still not known if the $L^2$ Dolbeault  cohomology 
on the annulus is  Hausdorff (Problem 1). Using the 
Sobolev extension theorem, this problem  is equivalent to asking 
if one can solve $\db$ in the Sobolev space $W^1(\T)$. One can 
ask more generally if one can solve 
$\db$ in any Sobolev space $W^s(\T)$ (Problem 2).  
The  Hodge theorem for the de Rham complex $d$ on $\T$ 
is also unknown for forms (Problem 3). Finally, one 
would also like to understand  the spectrum of the 
$\db$-Laplacian (Problem 4).  We also refer the 
reader to the many open problems for the Hartogs triangle
in $\mathbf{CP}^2$ in \cite{LaurentShaw19}. The
results in this paper  are only the beginning of 
understanding function theory 
on non-smooth domains.

\section{\bf Sobolev spaces on $\T$}

In this section, we establish some basic facts about the Sobolev
spaces $W^{1,p}$ on the Hartogs triangle
\begin{equation}
\label{eq:def-T}
\T:=\left\{(z,w)\in \CC^2\ \big\vert\ 
|z|<|w|<1\right\}\,.
\end{equation}
We show that these spaces have many useful properties, 
including bounded extension
and trace operators, smooth approximation,
Sobolev embeddings, and the Poincar\'e inequality.
The key to these properties are two geometric regularity results:
$\T$ is a uniform domain (Theorem~\ref{th:uniform}), 
whose boundary is Ahlfors-David regular
(Lemma~\ref{le:ADR}). 

\subsection{Uniform domain} 

\begin{definition}\label{de:uniform} 
Let $\Omega$ be a 
domain in $\RR^n$. The domain $\Omega$ 
is called an {\em $(\epsilon,\delta)$ domain} if 
for every $p_1,\ p_2\in \Omega$ and $|p_1-p_2|<\delta$, there 
exists a rectifiable curve $\gamma\in \Omega$ joining $x$ and $y$  such that 
$$\ell(\gamma)\le \frac 1\eps |p_1-p_2|$$
and 
$${\rm dist}(p, b\Omega) \ge\frac{ \eps  |p-p_1|  |p-p_2|} {|p_1-p_2|}\quad  
\text{for all } p\in \gamma.$$ 
where $\ell(\gamma)$ denotes the Euclidean  length of $\gamma$ 
and ${\dist}(p,b\Omega)$ denotes the distance from $p$ to $b\Omega$.

When $\delta=\infty$, $\Omega$  is called a {\em uniform domain}. 
 \end{definition}

Uniform domains were first introduced in \cite{MS79} 
and  \cite{GO1979}, while the notion of a $(\eps,\delta)$ 
domain  was introduced in \cite{Jones81}.  It 
turns out that for  bounded domains, they are 
equivalent~(see \cite{Vai1988}).


We will prove by direct computation that the Hartogs triangle
$\T$ is a uniform domain.  Following \cite{Jones81},  
it suffices  to show that 
there exists a constant $c>0$ such that
  every pair of points $p_1\ne\ p_2\in \T$ can be joined
  by a rectifiable curve $\gamma$ in $\T$ with
  \begin{equation}
    \label{eq:length}
    \length(\gamma)\le c|p_1-p_2|\,,
  \end{equation}
such that
\begin{equation} 
    \label{eq:dist}
    \min \{|p-p_1|, |p-p_2|\} \le c\, 
    {\rm dist}\, (p, b\T)
    \quad  \text{for all } p\in \gamma\,.
  \end{equation}
 
We begin with an elementary inequality.
\begin{lemma} [Distance in $\CC^2$]
\label{le:polar}
Let $p_1,p_2\in \CC^2$ be given by
$p_j=(r_{j}e^{i\alpha_j},s_{j}e^{i\beta_j})$,
where $r_{j},s_{j}\ge 0$ and $\alpha_j,\beta_j\in\RR$, j=1,2. 
If
$|\alpha_1\!-\!\alpha_2|\le \pi$ and $|\beta_1\!-\!\beta_2|\le \pi$, 
then
$$ 
|r_1\!-\!r_2|+|s_1\!-\!s_2|
+\min\{r_1,r_2\}|\alpha_1\!-\!\alpha_2| +\min\{s_1,s_2\}
|\beta_1\!-\!\beta_2|\ \le \ 3|p_1-p_2|\,.
$$ 
\end{lemma}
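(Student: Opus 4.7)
The statement decomposes naturally into a one-dimensional claim applied twice. The plan is first to establish, for $z_j = r_j e^{i\alpha_j}$ in $\CC$ with $|\alpha_1-\alpha_2|\le\pi$, the estimate
\begin{equation*}
|r_1-r_2| + \min\{r_1,r_2\}\,|\alpha_1-\alpha_2| \;\le\; K\,|z_1-z_2|
\end{equation*}
for an explicit constant $K$, and then to combine this bound with the analogous bound for the $w$-coordinates. Passing from the pair of one-dimensional inequalities to the full inequality on $\CC^2$ is a matter of summing and invoking the Cauchy--Schwarz inequality $|z_1-z_2|+|w_1-w_2|\le \sqrt{2}\,|p_1-p_2|$.

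For the one-dimensional estimate, I would start from the law of cosines
\begin{equation*}
|z_1-z_2|^2 \;=\; (r_1-r_2)^2 + 4\, r_1 r_2 \sin^2\bigl(\tfrac{\alpha_1-\alpha_2}{2}\bigr).
\end{equation*}
The angle hypothesis $|\alpha_1-\alpha_2|\le\pi$ places the half-angle in $[-\pi/2,\pi/2]$, where Jordan's inequality $|\sin\theta|\ge (2/\pi)|\theta|$ is valid. Combined with the elementary $\sqrt{r_1 r_2}\ge \min\{r_1,r_2\}$, this yields
\begin{equation*}
|z_1-z_2|^2 \;\ge\; (r_1-r_2)^2 + \bigl(\tfrac{2}{\pi}\bigr)^{2}\min\{r_1,r_2\}^{2}\,(\alpha_1-\alpha_2)^{2}.
\end{equation*}
A single Cauchy--Schwarz inequality with weights $1$ and $\pi/2$, chosen precisely to absorb the factor $2/\pi$, then produces the one-dimensional bound with $K=\sqrt{1+\pi^{2}/4}$.

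Adding the $z$- and $w$-estimates and applying Cauchy--Schwarz a second time gives the final bound $\text{LHS}\le \sqrt{2+\pi^{2}/2}\,|p_1-p_2|$, and $\sqrt{2+\pi^{2}/2}<3$ since $\pi^{2}<14$. The only nontrivial input is Jordan's inequality, and this is where the angle restriction of the lemma enters: without the hypothesis $|\alpha_1-\alpha_2|,|\beta_1-\beta_2|\le\pi$ the left-hand side can be made arbitrarily large by adding multiples of $2\pi$ to one of the angles while the right-hand side is unaffected, so the hypothesis is genuinely necessary. No real obstacle is anticipated beyond keeping track of the constants to confirm that they fit below $3$.
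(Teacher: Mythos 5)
Your proposal is correct and follows essentially the same route as the paper's proof: both rest on the polar expansion of $|p_1-p_2|^2$, the chord--arc comparison $|\alpha_1-\alpha_2|\le\tfrac{\pi}{2}|e^{i\alpha_1}-e^{i\alpha_2}|$ (Jordan's inequality), the bound $\min\{r_1,r_2\}\le\sqrt{r_1r_2}$, and a weighted Cauchy--Schwarz step, arriving at the identical constant $\sqrt{2+\pi^{2}/2}<3$. The only difference is organizational: you factor through a one-dimensional estimate and apply Cauchy--Schwarz twice, whereas the paper applies a single four-term Cauchy--Schwarz.
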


\begin{proof} 
We compare the right hand side of the inequality to
the squared distance
$$
|p_1-p_2|^2 
= |r_1-r_2|^2 + |s_1-s_2|^2 
+ r_1r_2|e^{i\alpha_1}-e^{i\alpha_2}|^2 
+ s_1s_2 |e^{i\beta_1}-e^{i\beta_2}|^2\,.
$$
Since $|\alpha_1-\alpha_2|\le \pi$,
their difference is comparable
to the distance of the corresponding unit vectors in $\CC$, 
$$
|\alpha_1-\alpha_2|\ \le \ 
\tfrac{\pi}{2} |e^{i\alpha_1}-e^{i\alpha_2}|\,,
$$
and correspondingly for $|\beta_1-\beta_2|$.
By Schwarz' inequality,
\begin{align*}
&
\Bigl(|r_1-r_2|+|s_1-s_2|
+\min\{r_1,r_2\} |\alpha_1-\alpha_2|
+ \min\{s_1,s_2\} |\beta_1-\beta_2|\Bigr)^2\\
&\qquad \le 
\Bigl(|r_1-r_2|+|s_1-s_2|+ \tfrac{\pi}{2}
\sqrt{r_1r_2}|e^{i\alpha_1}-e^{i\alpha_2}|
+\tfrac{\pi}{2} \sqrt{s_1s_2}|e^{i\beta_1}-e^{-\beta_2}|\Bigr)^2\\
&\qquad \le \left(2+\tfrac{\pi^2}{2}\right)
|p_1-p_2|^2\,.
\end{align*}
Since $\sqrt{2+\frac{\pi^2}{2}}<3$, this proves the claim.
\end{proof}

In order to understand the role of the singularity of $\T$
the origin, we consider the infinite Hartogs triangle
\begin{equation}
\label{eq:def-T infty}
\T_\infty:= \left\{(z,w)\in\CC^2\ \big\vert\ |z|<|w|\right\}\,.
\end{equation}

\begin{lemma} 
  \label{le:T-infty-uniform} $\T_\infty$ is a uniform domain.
\end{lemma}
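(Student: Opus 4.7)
The plan is to verify the simpler criterion (\ref{eq:length})--(\ref{eq:dist}) by an explicit construction. A direct minimization, using that $b\T_\infty=\{|z|=|w|\}$, gives $\dist\bigl((z,w),b\T_\infty\bigr)=(|w|-|z|)/\sqrt 2$ for every $(z,w)\in\T_\infty$. Using the toric action $(z,w)\mapsto(e^{i\sigma}z,e^{i\tau}w)$ by isometries of $\T_\infty$, I normalize $p_1=(r_1,s_1)$ with $0\le r_1<s_1$ real and $p_2=(r_2e^{i\alpha},s_2e^{i\beta})$ with $|\alpha|,|\beta|\le\pi$. Write $\delta=|p_1-p_2|$ and $d_j=(s_j-r_j)/\sqrt 2=\dist(p_j,b\T_\infty)$. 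If $\delta\le\tfrac12\min(d_1,d_2)$, the Euclidean segment $[p_1,p_2]$ stays in $\T_\infty$ (since $\dist(p,b\T_\infty)\ge d_j-|p-p_j|\ge d_j/2$ on the segment) and both conditions hold trivially with $c=2$.

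In the complementary case $\delta>\tfrac12\min(d_1,d_2)$, I build $\gamma$ in three consecutive pieces. The outer pieces deform $p_j$ to an auxiliary point $p_j^\ast$ with $\dist(p_j^\ast,b\T_\infty)\ge\delta$ by shrinking $|z_j|$ radially while keeping $w_j$ fixed; on each such segment, a direct computation using the formula for $\dist$ shows that $|\gamma(t)-p_j|/\dist(\gamma(t),b\T_\infty)\le\sqrt 2$, so (\ref{eq:dist}) holds. The middle piece is the linear interpolation in the polar coordinates $(r_z,\theta_z,r_w,\theta_w)$ between $p_1^\ast$ and $p_2^\ast$. Its length is bounded by $|r_1^\ast-r_2^\ast|+|s_1^\ast-s_2^\ast|+\max(r_1^\ast,r_2^\ast)|\alpha|+\max(s_1^\ast,s_2^\ast)|\beta|$, each term $\lesssim\delta$ via Lemma~\ref{le:polar} (since $\max-\min\le\delta$ and $|\alpha|,|\beta|\le\pi$); meanwhile $\dist(\gamma(t),b\T_\infty)=\bigl((1-t)(s_1^\ast-r_1^\ast)+t(s_2^\ast-r_2^\ast)\bigr)/\sqrt 2\ge\delta$, and $|\gamma(t)-p_j^\ast|$ is at most the total length $\lesssim\delta$, giving (\ref{eq:dist}) with a universal constant.

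The main obstacle is the sub-case in which both endpoints lie near the singular point $0\in b\T_\infty$, i.e., $|w_1|,|w_2|\lesssim\delta$: shrinking $|z_j|$ alone cannot achieve $\dist(p_j^\ast,b\T_\infty)\ge\delta$ (for that one would need $|w_j|\ge\delta$), and the middle interpolation may pass close to $0$, where $b\T_\infty$ fails to be Lipschitz. Here I exploit the scaling invariance $(z,w)\mapsto\lambda(z,w)$ of $\T_\infty$: after rescaling so that $\delta=1$, the problem is confined to a bounded neighborhood of $0$ in which $|p_1|,|p_2|\lesssim 1$, and I route the curve through a deep intermediate point $p^\ast=(0,c_0)$ with $c_0$ a fixed constant of order $1$, via a composition of radial segments in the $z$-plane (from $p_j$ to $(0,w_j)$) and rectifiable arcs in the $w$-plane avoiding the origin (from $(0,w_j)$ to $p^\ast$). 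The length and distance-to-boundary bounds on each sub-piece follow from computations analogous to those of the previous paragraph, and the scale invariance of the conditions (\ref{eq:length})--(\ref{eq:dist}) guarantees that the universal constant $c$ does not depend on the original scale.
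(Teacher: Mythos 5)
Your construction is correct, and it follows the same basic strategy as the paper --- join the points through a ``safe corridor'' at distance $\gtrsim|p_1-p_2|$ from $b\T_\infty$, using the explicit formula $\dist\bigl((z,w),b\T_\infty\bigr)=(|w|-|z|)/\sqrt2$, polar interpolation of the angles, radial connecting segments along which the boundary distance grows linearly, and Lemma~\ref{le:polar} for the length --- but with one structural difference. The paper reaches the corridor by pushing the $w$-radius \emph{outward} to $s^*=\max\{s_1,s_2\}+|p_1-p_2|$; since $\T_\infty$ is an unbounded cone this always yields $\dist=\tfrac1{\sqrt2}(s^*-r_*)\ge\tfrac1{\sqrt2}|p_1-p_2|$ no matter how close the points are to the origin, so a single construction handles every pair and the singular point never enters. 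You push the $z$-radius \emph{inward}, which caps the attainable distance at $|w_j|/\sqrt2$ and therefore fails when the endpoints lie within $O(\delta)$ of the origin; you correctly isolate that subcase and repair it by the scale invariance of $\T_\infty$, routing through a fixed deep point $(0,c_0)$ after normalizing $\delta=1$. Your three cases all check out: the ratio bound $\le\sqrt2$ on the radial pieces, the linear formula for $\dist$ along the polar interpolation, the passage from the $\min$ in Lemma~\ref{le:polar} to your $\max$ at a cost of only $O(\delta)$, and the $O(1)$ length and distance estimates in the rescaled picture. What the paper's choice buys is economy --- one case-free construction with explicit constants; what yours buys is that it makes visible exactly where the non-Lipschitz point would cause trouble and how homogeneity neutralizes it, at the price of a case analysis and a final subcase that is only sketched and whose length and distance bounds you should still write out in full.
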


\begin{proof} We will join any given pair
of points $p_1\ne p_2\in \T_\infty$
by a curve $\gamma$ in $\T_\infty$ that satisfies
 Eqs.~\eqref{eq:length} and~\eqref{eq:dist} with 
$c=5+2\pi<12$.
The curve consists of an arc $\gamma_0$ that maintains
a constant distance from the boundary, and
a pair of line segments $\gamma_1$ and $\gamma_2$
attached at the ends.

For $j=1,2$, write the points in polar coordinates
as $p_j=\bigl(r_j e^{i\alpha_j}, s_j e^{i\beta_j}\bigr)$
with $0\le r_j<s_j$, $|\alpha_1\!-\!\alpha_2|\le \pi$,
  and $|\beta_1\!-\!\beta_2|\le \pi$.
Choose $\gamma_0$ as the arc parametrized by 
  $ p=\bigl(r_* e^{i\alpha}, s^* e^{i\beta}\bigr)$, where
\begin{equation}
\label{eq:rs}
  r_*:= \min\{r_1,r_2\}\,,\qquad
  s^*:=\max\{s_1,s_2\}+ |p_1-p_2|\,,
\end{equation}
and the angles vary linearly from $\alpha_1,\beta_1$ to 
$\alpha_2,\beta_2$. Its endpoints 
\begin{equation}
\label{eq:qj}
q_j:=\bigl(r_* e^{i\alpha_j}, s^* e^{i\beta_j}\bigr)\,, \quad
j=1,2
\end{equation}
are joined to the corresponding 
points $p_j$ by line segments $\gamma_j$.

  \smallskip\noindent {\em Length of the curve.}\ 
  Since $s^*\le \min\{s_1,s_2\}+ 2|p_1-p_2|$,
  we obtain for the arc
  $$
  \length(\gamma_0)
  \ \le \ \min\{r_1,r_2\}|\alpha_1-\alpha_2|
  + \min\{s_1,s_2\}|\beta_1-\beta_2|
  +2\pi |p_1-p_2| \,.
  $$
  The initial and final segments satisfy
  \begin{align*}
    \length(\gamma_1) 
    & \le \ (r_1-r_2)_+ + (s_1-s_2)_- +|p_1-p_2|\,,\\
    \length(\gamma_2) 
    & \le \ (r_1-r_2)_- + (s_1-s_2)_+ +|p_1-p_2|\,,
  \end{align*}
  where $x_{+}$ and $x_{-}$ denote, respectively, the positive and negative parts of a number $x \in \RR$.
Adding the three inequalities yields by
Lemma~\ref{le:polar} that 
$\length(\gamma) \le \bigl(5+2\pi\bigr)|p_1-p_2|$.

  \small\noindent{\em Distance from the boundary.}\ 
Let $p=(re^{i\alpha}, se^{i\beta})\in \T_\infty$.
Since the ball of radius $\frac{1}{\sqrt2}|r-s|$ about $p$
is contained in $\T_\infty$, and its boundary
meets $b\T_\infty$ in the single point 
$\tfrac{r+s}{2}(e^{i\alpha},e^{i\beta})$, 
  $$
  {\rm dist}\, (p,b\T_\infty)= \left|p-\tfrac{r+s}{2}(e^{i\alpha},e^{i\beta})
  \right|=\tfrac1{\sqrt2}|s-r|\,.
$$
On the central arc, the distance from the boundary
is constant,
$$
{\rm dist}\,(p,b\T_\infty) \ =\ \tfrac1{\sqrt2}(s^*-r_*)
  \ \ge \ \tfrac1{\sqrt2} |p_1-p_2|\,,\quad p\in \gamma_0\,.
$$
Since $\min\{|p-p_1|,|p-p_2|\}\le \frac12 \ell(\gamma)$,
it follows that
$$
\frac{\min\{|p-p_1|,|p-p_2|\}} {{\rm dist}\,(p,b\T_\infty)}
\ \le \ \frac{5+2\pi}{\sqrt2}\,,
\quad p\in \gamma_0\,.
$$
This yields Eq.~\eqref{eq:dist} on $\gamma_0$.
On the segments $\gamma_j$, the distance from the boundary increases
linearly (in the arclength parametrization) 
from $p_j$ to $q_j$, and therefore 
$$
\frac{|p-p_j|}{
{\rm dist}\, (p,b\T_\infty)}
\ \le \ \frac{|q_j-p_j|}{{\rm dist}\, (q_j,b\T_\infty)}\,,
\qquad p\in\gamma_j,\ j=1,2\,.
$$
Since $q_1,q_2$ lie on the arc $\gamma_0$, this completes the proof
of Eq.~\eqref{eq:dist}.
\end{proof}

  \begin{theorem}\label{th:uniform} The Hartogs triangle ${\T}$  
is a uniform domain.
 \end{theorem}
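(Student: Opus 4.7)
I would prove Theorem~\ref{th:uniform} by modifying the construction in Lemma~\ref{le:T-infty-uniform} so that the curve $\gamma$ stays inside $\T$. Writing $p_j=(r_je^{i\alpha_j},s_je^{i\beta_j})$ with $0\le r_j<s_j<1$, the obstruction is twofold: the height $s^*=\max\{s_1,s_2\}+|p_1-p_2|$ of the arc $\gamma_0$ can exceed $1$, and one must additionally verify \eqref{eq:dist} against the outer boundary component $\{|w|=1\}$.

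In the \emph{interior} regime $|p_1-p_2|\le\tfrac14(1-\max\{s_1,s_2\})$, the construction from Lemma~\ref{le:T-infty-uniform} applies verbatim: one checks that $s^*\le\tfrac34\max\{s_1,s_2\}+\tfrac14<1$ and that $1-|w|\ge\tfrac34(1-\max\{s_1,s_2\})\ge 3|p_1-p_2|$ along $\gamma$, which controls the distance to $\{|w|=1\}$; the inner-cone estimate to $\{|z|=|w|\}$ is unchanged.

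In the complementary \emph{boundary} regime $|p_1-p_2|>\tfrac14(1-\max\{s_1,s_2\})$, both $s_j$ lie within $5|p_1-p_2|$ of $1$, so I would replace $s^*$ by a smaller height $s^\sharp=1-K|p_1-p_2|$ for a constant $K>5$. The arc $\gamma_0$ then sits strictly below both $p_j$, and the segments $\gamma_j$ descend from $p_j$ to $q_j=(r_*e^{i\alpha_j},s^\sharp e^{i\beta_j})$. By construction the distance from $\gamma_0$ to $\{|w|=1\}$ equals $K|p_1-p_2|$, and provided $s^\sharp-r_*$ is comparable to $|p_1-p_2|$, the distance to $\{|z|=|w|\}$ is handled as in Lemma~\ref{le:T-infty-uniform}, with total length still $O(|p_1-p_2|)$.

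The main obstacle is the edge subcase where $r_*=\min\{r_1,r_2\}$ is also close to $1$, so that $s^\sharp-r_*$ fails to have the needed lower bound; then both points cluster near the circle $\{|z|=|w|=1\}$. Near this circle $\T$ is locally a Lipschitz wedge cut out by the two smooth transversal hypersurfaces $\{|z|=|w|\}$ and $\{|w|=1\}$, and I would finish with a direct argument: shift $p_1,p_2$ slightly into the interior of the wedge by decreasing both $r_j$ and $s_j$ by a common constant multiple of $|p_1-p_2|$, connect the shifted points by a short polygonal path whose angular contribution is controlled by Lemma~\ref{le:polar}, and verify by direct calculation that the distance to each boundary component stays $\gtrsim|p_1-p_2|$. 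Taking the largest of the constants produced in the three regimes yields a single $c$ for which \eqref{eq:length} and \eqref{eq:dist} hold throughout $\T$.
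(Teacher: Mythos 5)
Your interior regime is correct, and the overall strategy (adapt the construction of Lemma~\ref{le:T-infty-uniform} and then control the extra boundary component $\{|w|=1\}$) is reasonable, but the boundary regime contains a genuine gap. The condition $|p_1-p_2|>\tfrac14(1-\max\{s_1,s_2\})$ is satisfied by \emph{every} pair of points with $|p_1-p_2|>\tfrac14$, including pairs that are far from the outer boundary and far from the corner circle; for such pairs $s^\sharp=1-K|p_1-p_2|$ with $K>5$ is negative, so the lowered arc does not exist. Concretely, take $p_1=(0,\tfrac12)$ and $p_2=(0,-\tfrac12)$: then $|p_1-p_2|=1>\tfrac14(1-\tfrac12)$, so you are in the boundary regime and $s^\sharp<-4$, yet $r_*=0$ and neither point is anywhere near $\{|z|=|w|=1\}$, so your ``edge subcase'' wedge argument does not apply either. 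Your diagnosis that $s^\sharp-r_*$ can only fail to have the needed lower bound when $r_*$ is close to $1$ is therefore incorrect; you would need a further case for well-separated pairs (e.g.\ route both points through a fixed interior point and verify the cigar condition directly, or prove only the $(\eps,\delta)$ condition for small $\delta$ and invoke the equivalence with uniformity for bounded domains recorded after Definition~\ref{de:uniform}). A secondary, sketch-level issue is that in the corner subcase, decreasing $r_j$ and $s_j$ by a \emph{common} multiple of $|p_1-p_2|$ increases the distance to $\{|w|=1\}$ but leaves $s_j-r_j$ unchanged, so points very close to the cone $\{|z|=|w|\}$ stay close to it; the shift must be anisotropic.

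For comparison, the paper avoids all case distinctions with a single multiplicative rescaling: it keeps the curve of Lemma~\ref{le:T-infty-uniform} but replaces the arc $\gamma_0'$ by $\gamma_0'/(1+2|p_1-p_2|)$. Since $s^*<1+|p_1-p_2|$, the rescaled arc satisfies $1-|w|\ge |p_1-p_2|/(1+2|p_1-p_2|)$, which is bounded below by a constant multiple of $|p_1-p_2|$ because $|p_1-p_2|\le\operatorname{diam}\T=2\sqrt2$, while the cone distance $\tfrac1{\sqrt2}(s^*-r_*)\ge\tfrac1{\sqrt2}|p_1-p_2|$ is only shrunk by the same bounded factor; the segments are then handled exactly as in $\T_\infty$. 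Adopting that rescaling in place of your additive lowering $s^\sharp=1-K|p_1-p_2|$ would close the gap and eliminate the need for the boundary and corner cases altogether.
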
 

\begin{proof} 
  Given two points $p_1,p_2\in \T$, define
the radii $r_*$, $s^*$ by Eq.~\eqref{eq:rs},
and let $\gamma'=\gamma'_0\cup\gamma'_1\cup\gamma'_2$ 
 be the curve constructed in the proof of 
Lemma~\ref{le:T-infty-uniform}.
We rescale the arc to
$$
\gamma_0:=\frac{1}{1\!+\!2|p_1-p_2|}\gamma'_0\,,
$$ 
and then join its endpoints $q_j$  
to $p_j$ by line segments $\gamma_j$, for $j=1,2$.
  Since $r_*<s^*<1\!+\!|p_1-p_2|$,
  the curve $\gamma:= \gamma_0\cup \gamma_1\cup \gamma_2$
  lies in $\T$. We will show that $\gamma$ satisfies
  Eqs.~\eqref{eq:length} and~\eqref{eq:dist}
  with $c=(1+4\sqrt{2})(5+2\pi +4\sqrt{2})/\sqrt{2}<80$.

\smallskip\noindent{\em Length of the curve.}\ 
By construction, $\length(\gamma_0) \le \length(\gamma'_0)$.
For $j=1,2$, we write the
endpoints of $\gamma_0$
as $q_j= (1+2|p_1-p_2|)^{-1} q_j'$, where
$q_j'$ is given by Eq.~\eqref{eq:qj}.  
Since 
$$|q_j'-q_j|\le 2|q_j|\,|p_1-p_2|\quad\text{and}\quad |q_j|\le \sqrt{2}\,,
$$
by the triangle inequality the lengths of the line segments satisfy
$$
\ell(\gamma_j) \le|p_j-q_j'|+ |q_j'-q_j| 
\le \length(\gamma')+ 2\sqrt{2}|p_1-p_2|\,,  \quad j=1,2\,.
$$
Adding these inequalities yields, by
Lemma~\ref{le:T-infty-uniform},
$$
\length(\gamma) \le \ell(\gamma')+ 4\sqrt{2}|p_1-p_2|
\le (5+ 2\pi + 4\sqrt{2})\, |p_1-p_2|\,.
$$

  \smallskip\noindent{\em Distance from the boundary.}\ 
For $p=(re^{i\alpha}, se^{i\beta}) \in \T$, we have
  $$
  {\rm dist}\, (p,b\T)= \min\left\{
  \tfrac1{\sqrt2} (s-r), 1\!-\!s\right\}\,,\quad p\in\T\,.
  $$
On the central arc $\gamma_0$, this distance constant,
\begin{align*}
  {\rm dist}\, (p,b\T)
  &=\min \left\{ \frac1{\sqrt2} \left(\frac {s^*-r_*} 
  {1\!+\!2|p_1-p_2|}\right), 
  1- \frac{s^*}{1\!+\!2|p_1-p_2|}\right\}\\
& \ge\frac{|p_1-p_2|}{\sqrt{2}(1+4\sqrt{2})}\,,\qquad p\in\gamma_0\,.
\end{align*}
In the second step we have used that
$|p_1-p_2| \le {\rm diam}\, (\T)=2\sqrt{2}$ 
to bound the denominators from above,
and that $s^*<1+|p_1-p_2|$ to bound the second
fraction from below.

Since $\min\{|p-p_1|,|p-p_2|\}\le\frac12\ell(\gamma)$ 
for every $p\in\gamma$, it follows that
$$
\frac{\min\{|p-p_1|,|p-p_2|\}}
{{\rm dist}\, (p,b\T)}
\ \le \ \frac{(1+4\sqrt{2})(5+2\pi + 4\sqrt{2})}{\sqrt{2}}
  \,,\quad p\in\gamma_0\,.
$$
This yields Eq.~\eqref{eq:dist} on $\gamma_0$.
  On $\gamma_j$, we argue as in the proof of 
Lemma~\ref{le:T-infty-uniform}
that the distances from the parts of the boundary at $\{z=w\}$ and 
$|w|=1$ change linearly along the segments to obtain
  $$
\frac{|p-p_j|}{{\rm dist}\, (p,b\T)} 
\ \le \ \frac{|q_j-p_j|}{{\rm dist}\, (q_j,b\T)} \,,\qquad
p\in\gamma_j, \ j=1,2\,.
$$
Since $q_1,q_2\in\gamma_0$, this completes the proof.
\end{proof}

For later use, we briefly discuss domains in the
complement of $\T$.

\begin{lemma}
\label{le:complement} 
Let $\Omega\subset \CC^2$ be a bounded Lipschitz domain
with $\Omega\supset \overline{\T}$. 
Then $\Omega\setminus\overline\T$ is a uniform domain.
\end{lemma}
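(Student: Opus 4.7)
The plan is to verify Definition~\ref{de:uniform} directly for $D := \Omega \setminus \overline{\T}$ by constructing admissible curves between arbitrary $p_1, p_2 \in D$, combining a local analysis near the singular point $0$ with the Lipschitz structure elsewhere on $bD$. Since $\Omega \supset \overline{\T}$ and $\Omega$ is open and bounded, the gap $\eta := \dist (\overline{\T}, b\Omega)$ is strictly positive, so the only singular boundary point of $D$, namely the origin, is separated from $b\Omega$.

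The key local step is that the \emph{exterior cone} $\mathcal{E} := \{(z,w) \in \CC^2 : |z| > |w|\}$ is itself a uniform domain. This is immediate from Lemma~\ref{le:T-infty-uniform} applied under the linear isometry $(z,w) \mapsto (w,z)$ of $\CC^2$, which maps $\T_\infty$ onto $\mathcal{E}$. In particular, within $B_{\eta/2}(0) \cap D = B_{\eta/2}(0) \cap \mathcal{E}$, any two points can be joined by an admissible curve, which by a Theorem~\ref{th:uniform}-style rescaling of its arc portion can be forced to remain inside $B_{\eta/2}(0)$, hence inside $\Omega$.

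Away from the origin, $bD$ is assembled from smooth hypersurfaces -- the cone $\{|z|=|w|\}$ and the cap $\{|w|=1\}$ meeting transversally along the torus $\{|z|=|w|=1\}$, together with the Lipschitz piece $b\Omega$ at distance $\geq \eta$ from the origin. Hence $D \setminus B_{\eta/4}(0)$ is a bounded Lipschitz domain, and in particular uniform. By the equivalence of uniformity with the $(\eps,\delta)$ condition for bounded domains~\cite{Vai1988}, it suffices to produce admissible curves for $p_1,p_2$ with $|p_1-p_2|$ small compared to $\eta$, and the three natural cases are: both in $B_{\eta/2}(0)$ (use the exterior-cone construction); both at distance $\geq \eta/4$ from the origin (use the Lipschitz uniformity); or one near and one far (concatenate).

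The main obstacle, as in the proof of Theorem~\ref{th:uniform}, lies in verifying the distance estimate~\eqref{eq:dist} along a concatenated curve in the mixed case. This is controlled by placing the bridging point at distance comparable to $|p_1-p_2|$ from the point nearer the origin, so that both sub-arcs have length and boundary-distance bounded by fixed multiples of $|p_1-p_2|$. Combined with the constants produced by Lemma~\ref{le:T-infty-uniform}, Theorem~\ref{th:uniform}, and the Lipschitz uniformity, this yields a global uniform constant $c = c(\Omega)$ and completes the proof.
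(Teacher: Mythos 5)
Your proposal is correct and follows essentially the same route as the paper: split $\Omega\setminus\overline\T$ into a neighborhood of the origin, where the complement coincides with the reflected cone $\{|w|<|z|\}$ and the constructions of Lemma~\ref{le:T-infty-uniform} and Theorem~\ref{th:uniform} apply, and a Lipschitz (hence uniform) remainder, then invoke the $(\eps,\delta)$ characterization for small $\delta$. The only caveats are cosmetic: the near-origin radius must also be taken less than $1$ so that $B_r(0)\cap(\Omega\setminus\overline\T)=B_r(0)\cap\mathcal{E}$ actually holds (the paper uses a bidisk $D^2_\rho$ with $\rho<1$, which makes that piece exactly a scaled copy of $\T$ after swapping coordinates), and your mixed case is vacuous once $\delta<\eta/4$, so the concatenation step is not needed.
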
 

\begin{proof} 
It suffices to prove that there exist constants 
$\delta>0$ and $c>0$ such that
any pair of points $p_1,p_2\in \Omega\setminus \T$ 
with $|p_1-p_2|<\delta$ can be joined by a curve 
$\gamma$ in $\Omega\setminus \T$ that satisfies
Eqs.~\eqref{eq:length} and~\eqref{eq:dist}.

For $\rho>0$, denote by 
$D^2_{\rho}:= \{(z,w)\in\CC^2: |z|<\rho, |w|<\rho\}$
the bidisk of radius~$\rho$.
Using that $\Omega$ contains the origin, 
we choose $\rho\in (0,1)$ so small that
$D^2_{\rho}\subset\Omega$, and write
$\Omega= O_1\cup O_2$,
where
$$
O_1:= D^2_{\rho}\,,
\quad O_2:= \Omega\setminus \overline{D^2_{\rho/2}}\,.
$$
Then $O_1\setminus\overline{\T}= 
\{(z,w)\in\CC^2\ \big\vert\ |w|<|z|<\rho\}$ 
is a scaled copy of $\T$, and
$O_2\setminus\overline\T$ has Lipschitz boundary 
since $b D^2_{\rho/2}$ intersects
$b\T$ transversally. Thus both are uniform
domains.  For $\delta>0$ sufficiently small,
any pair of points in $\Omega\setminus\overline{T}$
with $|p_1-p_2|<\delta$ is contained
in one of the sets $\Omega_j$, in such a
way that the distance of both  points from
$b\T$ is comparable to the distance from 
$\Omega\setminus \overline\T$.  Therefore, a 
connecting curve satisfying Eq.~\eqref{eq:dist} can be constructed
in either $O_1\setminus\overline\T$
or $O_2\setminus\overline\T$.
\end{proof}

\subsection{Boundary regularity}

\begin{definition}\label{de:ADR} Let $\sigma$ denote
the $(d-1)$-dimensional Hausdorff measure on $\RR^d$.
A~closed subset
$S\subset\RR^d$ is {\em Ahlfors-David regular}, if
there exists a constant $c>0$ such that
\begin{equation}
\label{eq:ADR}
c^{-1}\rho^{d-1}\ \le\  \sigma \bigl(B_\rho(p)\cap S)\bigr) 
\ \le \ c \rho^{d-1}
\end{equation}
for all $p\in S$ and all $\rho<{\rm diam}\, S$. Here, $B_\rho(p)$ denotes
the ball of radius $\rho$ about $p$.
\end{definition}

We first consider the unbounded Hartogs triangle from
Eq.~\eqref{eq:def-T infty}. Clearly, $b\T_\infty$ is rectifiable
with respect to the three-dimensional Hausdorff
measure, $\sigma$. In particular, the restriction of $\sigma$
to $b\T_\infty$ agrees with the natural surface measure.

\begin{lemma} 
\label{le:sigma-continuous}
$\sigma \bigl(B_\rho(p)\cap b\T_\infty)\bigr)$ 
depends continuously on $\rho\ge 0$ and $p\in b\T_\infty$.
\end{lemma}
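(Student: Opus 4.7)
The strategy is to write $F(\rho,p):=\sigma(B_\rho(p)\cap b\T_\infty)$ as the integral of a characteristic function against $\sigma$ on $b\T_\infty$ and to apply dominated convergence. The only issue is that $\chi_{B_\rho(p)}$ is discontinuous on the sphere $\p B_\rho(p)$, so the key point is to check that
\begin{equation}
\label{eq:sphere-null}
\sigma\bigl(\p B_\rho(p)\cap b\T_\infty\bigr)\ =\ 0\qquad\text{for every $\rho\ge 0$ and every $p\in b\T_\infty$.}
\end{equation}

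First I would make the measure explicit. Away from the origin, the map $\Phi(r,\alpha,\beta):=(re^{i\alpha},re^{i\beta})$ is a smooth immersion of $(0,\infty)\times \RR^2$ onto $b\T_\infty\setminus\{0\}$, and a direct Gram-matrix computation gives $d\sigma = \sqrt{2}\,r^2\,dr\,d\alpha\,d\beta$. In particular $\{0\}$ is $\sigma$-null, and $\sigma(B_R(0)\cap b\T_\infty)$ is finite for every $R>0$, so $\sigma$ is a Radon measure on $b\T_\infty$.

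Next I would verify \eqref{eq:sphere-null}. Writing $p=(r_0 e^{i\alpha_0},r_0 e^{i\beta_0})\in b\T_\infty$ and $q=\Phi(r,\alpha,\beta)$, one computes
$$
|q-p|^2-\rho^2\ =\ 2r^2+2r_0^2-2rr_0\bigl[\cos(\alpha-\alpha_0)+\cos(\beta-\beta_0)\bigr]-\rho^2\,,
$$
which is a nonconstant real-analytic function of $(r,\alpha,\beta)$ on $(0,\infty)\times\RR^2$ (it tends to $+\infty$ as $r\to\infty$). Its zero set therefore has Lebesgue measure zero in $\RR^3$, so the corresponding subset of $b\T_\infty\setminus\{0\}$ has $\sigma$-measure zero, proving \eqref{eq:sphere-null}.

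With \eqref{eq:sphere-null} in hand, continuity is now a standard dominated-convergence argument. If $(\rho_n,p_n)\to(\rho,p)$ in $[0,\infty)\times b\T_\infty$, then for every $q\in b\T_\infty$ with $|q-p|\ne \rho$ one has $\chi_{B_{\rho_n}(p_n)}(q)\to\chi_{B_\rho(p)}(q)$; by \eqref{eq:sphere-null} this convergence holds $\sigma$-a.e. For $n$ large enough, $B_{\rho_n}(p_n)\subset B_{\rho+1}(p)$, and the integrable dominant $\chi_{B_{\rho+1}(p)}$ has finite $\sigma$-measure by the parametrization above. Dominated convergence then yields $F(\rho_n,p_n)\to F(\rho,p)$. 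The only potentially subtle step is \eqref{eq:sphere-null}, and this is disposed of by the analyticity observation above; no other obstacle arises.
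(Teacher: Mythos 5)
Your proof is correct, but it reaches the key null-set fact by a different route than the paper. Both arguments ultimately rest on showing that $\sigma\bigl(bB_\rho(p)\cap b\T_\infty\bigr)=0$ for every sphere. The paper gets this geometrically: it observes that the normal line to $b\T_\infty$ at a point $(z,w)\neq 0$ never meets $b\T_\infty$ again, so a sphere can be tangent to $b\T_\infty$ only in degenerate situations; by the Implicit Function Theorem the intersection is then a $2$-dimensional submanifold (with a separate remark for the case $\rho=|p|$, where the origin is the only non-transversal point), hence $\sigma$-null. It then deduces continuity in $\rho$ from monotonicity (a non-decreasing, left-continuous function whose jumps all vanish is continuous) and continuity in $p$ from a sandwich $B_{\rho-\delta}(p)\subset B_\rho(q)\subset B_{\rho+\delta}(p)$. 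You instead compute $|q-p|^2-\rho^2$ explicitly in the polar parametrization, note that it is a nonconstant real-analytic function of $(r,\alpha,\beta)$, and invoke the fact that the zero set of such a function is Lebesgue-null; combined with the explicit density $\sqrt{2}\,r^2\,dr\,d\alpha\,d\beta$ this gives the null-set claim in all cases at once, including $\rho=|p|$, with no tangency analysis. Your dominated-convergence step then yields joint continuity in $(\rho,p)$ directly, rather than treating the two variables separately. What the paper's approach buys is extra geometric information (the intersection is a genuine $2$-submanifold away from the origin) and independence from any particular parametrization; what yours buys is uniformity of the argument and fewer case distinctions, at the cost of relying on the (standard, but nontrivial) measure-zero property of real-analytic zero sets. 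Both are complete proofs of the lemma.
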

\begin{proof} Consider first the dependence on the radius.
For every fixed $p\in b\T_\infty$, the
function $g(\rho):= \sigma \bigl(B_\rho(p)\cap b\T_\infty)\bigr) =0$
is non-decreasing in $\rho$. Hence $g$ has at most countably 
many discontinuities, given by jumps. Since $g$ is left continuous,
the size of the jump at $\rho$ equals
$$
g(\rho_+)-g(\rho)=
\sigma \bigl(bB_\rho(p)\cap b\T_\infty\bigr) \,. 
$$ 

For $p\in b\T_\infty$ and $\rho\ne |p|$,
the sphere $bB_\rho(p)$ intersects
$b\T_\infty$ transversally.
Indeed, if a sphere were to touch $b\T_\infty$ at a point $q=(z,w)\ne 0$, 
then its center would lie on the line through $q$ 
normal to the surface. This normal line has the form
$\{(z,w)+s(z,-w): s\in\RR\}$, which does
not intersect $b\T_\infty$ again. 
By the Implicit Function Theorem,
the transversal intersection 
$bB_\rho(p)\cap b\T_\infty$ is a submanifold of dimension 2.
On the other hand, for $\rho=|p|>0$ the intersection
$bB_\rho(p)\cap b\T_\infty$ contains the singular point
at the origin; away from the origin the intersection 
is again transversal.
In any case, the three-dimensional Hausdorff measure of 
the intersection vanishes, establishing continuity in $\rho$.

We turn to the dependence on $p$. For $q\in b\T_\infty$
with $|p-q|<\delta$, we have that
$$
B_{\rho-\delta}(p)\ \subset \ B_\rho(q)\ \subset \ B_{\rho+\delta}(p)\,,
$$
and hence, by the positivity of $\sigma$,
$$
\sigma(B_{\rho-\delta}(p)\cap b\T_\infty) 
\ \le\ \sigma(B_\rho(q)\cap b\T_\infty)
\ \le \ \sigma(B_{\rho+\delta}(p)\cap b\T_\infty)\,.
$$
Continuity in $p$ thus follows from continuity in $\rho$.
\end{proof}

\begin{lemma} 
  \label{le:T-infty-ADR}
$\T_\infty$ has Ahlfors-David regular boundary.
\end{lemma}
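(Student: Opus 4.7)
My plan is to reduce the Ahlfors--David inequality~\eqref{eq:ADR} for $b\T_\infty$ (with $d=4$) to the behavior of a single continuous function on $[0,\infty)$, using the two symmetries of $\T_\infty$: the scaling $x\mapsto\lambda x$, under which $\T_\infty$ is a cone and the $3$-dimensional Hausdorff measure transforms by $\lambda^3$, and the torus action $(z,w)\mapsto(e^{i\theta_1}z,e^{i\theta_2}w)$, which is an isometry of $\CC^2$ that preserves $b\T_\infty$.

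First, applying the scaling identity with $\lambda=1/\rho$ gives
\[
\sigma\bigl(B_\rho(p)\cap b\T_\infty\bigr)\ =\ \rho^3\,h(p/\rho)\,,\qquad
h(q)\ :=\ \sigma\bigl(B_1(q)\cap b\T_\infty\bigr)\,.
\]
Since the torus action is an isometry, $h(q)$ depends only on $|q|$, so $h(q)=\tilde h(|q|)$ for some function $\tilde h:[0,\infty)\to[0,\infty)$; by Lemma~\ref{le:sigma-continuous}, $\tilde h$ is continuous. The estimate~\eqref{eq:ADR} is thus equivalent to $\tilde h$ being bounded above and below by positive constants on $[0,\infty)$.

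Next, I would verify that $\tilde h$ has positive limits at both endpoints. At $t=0$, the scaling identity applied at the origin yields $\sigma(B_r(0)\cap b\T_\infty)=r^3\tilde h(0)$; since $b\T_\infty=\bigcup_{r>0}(B_r(0)\cap b\T_\infty)$ is a nontrivial $3$-rectifiable set, the left side must be positive for some (hence every) $r>0$, forcing $\tilde h(0)>0$. For $t\to\infty$, reverse the scaling: for any fixed unit vector $q_0\in b\T_\infty$,
\[
\tilde h(t)\ =\ t^3\,\sigma\bigl(B_{1/t}(q_0)\cap b\T_\infty\bigr)\,.
\]
Since $q_0$ is a smooth point of $b\T_\infty\setminus\{0\}$, a tangent-plane approximation gives $\sigma(B_{1/t}(q_0)\cap b\T_\infty)\sim V_3 t^{-3}$ as $t\to\infty$, where $V_3=4\pi/3$ is the volume of a unit ball in $\mathbb{R}^3$. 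Thus $\tilde h(t)\to V_3$, and boundedness of $\tilde h$ on $[0,\infty)$ above and below by positive constants follows from continuity together with the positive limits at the two endpoints.

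The main obstacle is making the $t\to\infty$ limit precise. To do so I would parametrize the smooth part of $b\T_\infty$ by $\Phi(r,\alpha,\beta)=(re^{i\alpha},re^{i\beta})$, whose Jacobian yields area element $\sqrt{2}\,r^2\,dr\,d\alpha\,d\beta$, and then use the distance estimate in Lemma~\ref{le:polar} to bracket $\Phi^{-1}(B_{1/t}(q_0))$ between coordinate boxes. A dominated-convergence argument on the resulting integrals confirms $t^3\sigma(B_{1/t}(q_0)\cap b\T_\infty)\to V_3$, which is the only nontrivial ingredient in the chain.
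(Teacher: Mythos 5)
Your proposal follows essentially the same route as the paper: both reduce the claim, via the dilation and torus symmetries of $\T_\infty$, to showing that the single continuous function $t\mapsto \sigma\bigl(B_1(q)\cap b\T_\infty\bigr)$ (for $|q|$ corresponding to $t$) is bounded above and below by positive constants, using Lemma~\ref{le:sigma-continuous} for continuity, an evaluation at $t=0$ (the paper computes $f(0)=2\pi^2/3$ explicitly where you argue positivity abstractly), and the flat-tangent-plane limit $4\pi/3$ as $t\to\infty$, which the paper justifies by exactly the parametrization-and-expansion computation you sketch. The one logical slip is your final sentence: continuity plus positive limits at the two endpoints does \emph{not} by itself exclude a zero of $\tilde h$ at an interior point $t_0\in(0,\infty)$, so you must also note (as the paper does by asserting strict positivity) that every unit ball centered at a point of $b\T_\infty$ contains a relatively open piece of the smooth $3$-manifold $b\T_\infty\setminus\{0\}$ and hence has positive $\sigma$-measure; with that one-line addition your argument is complete.
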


\begin{proof} 
We parametrize the boundary $b\T_\infty$ by
$$
p(r,\alpha,\beta):= \tfrac{1}{\sqrt{2}}(re^{i\alpha},re^{i\beta})\,,
\qquad r\ge 0\,, \alpha,\beta\in (-\pi,\pi]\,.
$$
The Jacobian of the parametrization
equals $\frac12r^2$.

For $t\ge 0$, let
$$
f(t) := \ \sigma \bigl(B_1(p(t,0,0))\cap b\T_\infty\bigr) 
$$
be the surface measure of the
intersection of the boundary with a ball of 
unit radius centered at $\frac{1}{\sqrt{2}}(t,t)\in b\T_\infty$.
By the rotation and dilation invariance of $\T_\infty$,
$$
\sigma \bigl(B_\rho(p)\cap b\T_\infty)\bigr) = \rho^3 
    f\left( \rho^{-1}|p|\right)\,,
\qquad \rho>0\, , p\in b\T_\infty\,.
$$
In the chosen parametrization,
$$
f(t)\ = \ \frac12 \int_{-\pi}^\pi \int_{-\pi}^\pi\int_0^\infty
\mathbf{1}_{B_1(p(t,0,0))}\bigl(p(r,\alpha,\beta)\bigr)\,
r^2 \, dr d\alpha d\beta\,.
$$
The  intersection is described by the inequality
$$ 
p(r,\alpha,\beta)\in B_1(p(t,0,0)) \ 
\Leftrightarrow\ (r-t)^2+2rt(\sin^2\tfrac{\alpha}{2}
+\sin^2 \tfrac{\beta}{2})< 1\,.
$$ 
For $t=0$, the condition simplifies to $r^2<1$, and
we find that
$$
f(0) = \frac12 \int_{-\pi}^\pi \int_{-\pi}^\pi\int_0^1
r^2 \, dr d\alpha d\beta = \frac{2\pi^2}{3}\,.
$$
For larger $t$, the condition is  
$\sin^2\tfrac{\alpha}{2} +\sin^2\tfrac{\beta}{2}
< \frac{1-(r-t)^2}{2tr}$,
and hence $\alpha, \beta = O(t^{-1})$ as $t\to\infty$.
Taylor expansion of the sine yields
$$
\alpha^2+\beta^2\le \frac{2(1-(r-t)^2)}{tr} + O(t^{-4})\,.
$$
For any $r>0$ with $|r-t|<1$, this inequality
defines an approximate disk in the $\alpha$-$\beta$ variables.
After integrating out these variables,
we are left with
$$
f(t)  \ = \ 
\pi \int_{t-1}^{t+1}\frac{1-(r-t)^2}{tr} r^2\, dr + O(t^{-2})
\quad \to \quad \frac{4\pi}{3} \qquad (t\to\infty)\,.
$$ 
In the last step, we have evaluated the integral explicitly.
Since $f$ is continuous by Lemma~\ref{le:sigma-continuous},
as well as strictly positive,
it follows that
$$
\inf_{t\ge 0} f(t)>0\,,\qquad \sup_{t\ge 0} f(t)<\infty\,,
$$
proving the claim.
\end{proof}

\begin{lemma} 
  \label{le:ADR}
The Hartogs triangle $\T$ has Ahlfors-David regular boundary.
\end{lemma}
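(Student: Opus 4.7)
The plan is to decompose the boundary as $b\T=\Sigma_1\cup\Sigma_2$, where $\Sigma_1:=\{(z,w)\in\CC^2 : |z|=|w|,\ |w|\le 1\}$ is the truncated cone and $\Sigma_2:=\{(z,w)\in\CC^2 : |z|\le 1,\ |w|=1\}$ is the ``lid'', and to treat the two pieces separately. Since $\Sigma_1\subset b\T_\infty$, Lemma~\ref{le:T-infty-ADR} will be directly applicable on $\Sigma_1$. On the other hand $\Sigma_2$ is a compact smooth $3$-submanifold of $\CC^2$ with boundary the $2$-torus $\Sigma_1\cap\Sigma_2=\{|z|=|w|=1\}$, so its intrinsic surface measure, which coincides with the induced $3$-dimensional Hausdorff measure, is Ahlfors-David regular by standard differential-geometric considerations.

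For the upper bound I would use subadditivity and bound $\sigma(B_\rho(p)\cap\Sigma_j)$ for $j=1,2$ separately. For $\Sigma_1$, if the intersection with $B_\rho(p)$ is nonempty I pick any $p'$ in it; then $p'\in b\T_\infty$ and $B_\rho(p)\subset B_{2\rho}(p')$, so Lemma~\ref{le:T-infty-ADR} applied at $p'$ gives a bound of order $\rho^3$. For $\Sigma_2$ the bi-Lipschitz parametrization $(z,\theta)\mapsto(z,e^{i\theta})$ with $|z|\le 1$ and $\theta\in[0,2\pi)$ shows that the induced surface measure on $\Sigma_2$ is the product of planar area on the disc $\{|z|\le 1\}$ with arc length on the unit circle, from which $\sigma(B_\rho(p)\cap\Sigma_2)\le C\rho^3$ is immediate.

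For the lower bound I would split into two cases. If $p\in\Sigma_1$ satisfies $|w_p|\le 1-\rho/8$, then every $p''\in B_{\rho/8}(p)$ has $|w_{p''}|<1$, so $B_{\rho/8}(p)\cap b\T=B_{\rho/8}(p)\cap b\T_\infty$, and Lemma~\ref{le:T-infty-ADR} applied at $p\in b\T_\infty$ supplies the lower bound of order $\rho^3$. Otherwise $p$ lies within distance of order $\rho$ from the edge $\Sigma_1\cap\Sigma_2$; in that regime I would locate an auxiliary point $q$ in the interior of $\Sigma_2$ at distance of order $\rho$ from the edge, with $|p-q|\le\rho/2$, so that $B_{\rho/2}(q)\subset B_\rho(p)$. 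The Ahlfors-David regularity of $\Sigma_2$ then yields $\sigma(B_\rho(p)\cap b\T)\ge\sigma(B_{\rho/2}(q)\cap\Sigma_2)\ge c\rho^3$.

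The main technical obstacle I anticipate is the bookkeeping in the transition regime near the edge $\{|z|=|w|=1\}$: the auxiliary point $q$ must be chosen so that $B_{\rho/2}(q)$ both fits inside $B_\rho(p)$ and remains at a definite distance from the edge of $\Sigma_2$, in order that the Ahlfors-David constant on $\Sigma_2$ not degenerate as $\rho\uparrow{\rm diam}\,\T=2\sqrt{2}$. For very small $\rho$ the geometry is essentially local and the two pieces decouple cleanly, so the real difficulty is confined to a fixed compact range of radii where uniformity can be secured by direct estimation.
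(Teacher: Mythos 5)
Your decomposition of $b\T$ into the truncated cone $\Sigma_1\subset b\T_\infty$ and the lid $\Sigma_2=\{|z|\le 1\}\times\{|w|=1\}$, the reduction of the cone estimates to Lemma~\ref{le:T-infty-ADR}, and the use of the product structure of the lid are exactly the paper's strategy, and your upper-bound argument matches the paper's essentially verbatim. The one defect is in the lower-bound case split: the ``otherwise'' clause asserts that every $p$ not covered by the first case lies within distance of order $\rho$ of the edge $\{|z|=|w|=1\}$, which is false for points in the interior of the lid (e.g.\ $p=(0,1)$ with $\rho$ small), and for such $p$ there is no auxiliary point $q$ with $|p-q|\le\rho/2$ whose distance to the edge is of order $\rho$. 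This is only an organizational slip rather than a missing idea: for those points you apply the Ahlfors--David regularity of $\Sigma_2$ directly at $p$ (you have already established it via the product parametrization, and it holds uniformly up to the boundary circle of the disk factor), so the argument closes with the tools you have on the table. Incidentally, your treatment of cone points --- shrinking the radius to $\rho/8$ so that the ball avoids the lid entirely and invoking Lemma~\ref{le:T-infty-ADR} at $p$ itself --- is arguably cleaner than the paper's, which instead exhibits an explicit annular subset of the cone below $p$ and appeals to homogeneity and continuity as in the proof of that lemma.
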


\begin{proof}
We need to  verify Eq.~\eqref{eq:ADR}  
for all $p\in b\T$ and all $\rho\in (0,2]$.
By rotational symmetry, we may assume that $p=(r,s)$
with $0\le r\le s\le 1$. 

We partition the boundary as 
$$ b\T
\ =\ \bigl(b\T_\infty \cap\{|w|\le 1\}\bigr) \ \cup \ 
\bigl(\T_\infty \cap\{|w|=1\}\bigr)\,.
$$
For the upper bound on the first term, if
$B_\rho(p)$ meets $b\T_\infty$ in a point 
$q\in b\T_\infty$ with $|p-q|<\rho$,
then $B_{\rho}(p)\cap b\T_\infty$ is contained in 
$B_{2\rho}(q)\cap b\T_\infty$. The measure of this ball
satisfies the desired bound by 
Lemma~\ref{le:T-infty-ADR}. Similarly, if $q\in B_\rho(p)\cap \{|w|=1\}$,
then we use that
$B_\rho(p)\cap \{|w|=1\} \subset B_{2\rho}(q)\cap\{|w|=1\}$,
which is contained in the product of a disk of radius $2\rho$
with a spherical arc of diameter at most $2\rho$. 
The surface measure of this intersection is at most 
comparable to $\rho^3$.

For the lower bound, we distinguish two cases.
If $p=(r,r)$ with $r\le 1$, then
$$
B_\rho(p)\cap b\T\  \supset \ 
\left\{(z,w)\in b\T_\infty\ \Big\vert\ 
r-\tfrac{\rho}{\sqrt{2}}< |z|=|w|<r  \right\} \,.
$$
The right hand side is homogeneous of
order three, as well as continuous and strictly positive.
As in the proof of Lemma~\ref{le:T-infty-ADR},
this implies  a lower bound of order $\rho^{3}$.
If, on the other hand, $p=(r,1)$ with $r<1$ then
\begin{align*}
B_\rho(p)\cap b\T
&\ \supset \ \left\{(z,w)\in\T_\infty \ \Big\vert\ 
|w|=1, |z-r|^2+|w-1|^2<\rho^2\right\} \\
& \ \supset \ 
\Bigl\{ |z|<1, |z-r|<\tfrac{\rho}{\sqrt{2}}
\Bigr\}
\ \times \ 
\Bigl\{|w|=1, |w-1|<\tfrac{\rho}{\sqrt{2}}\Bigr\}\,.
\end{align*}
Its surface measure is bounded from below
by a constant multiple of $\rho^3$.
\end{proof}

Our results can be summarized as follows.

\begin{corollary}\label{co:chord-arc} The Hartogs triangle $\T$
is a chord-arc domain.
\end{corollary}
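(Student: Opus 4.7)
The plan is straightforward: a chord-arc domain is, by the definition used in the paper's cited references (see \cite{ChordArc2017}), a uniform domain whose boundary is Ahlfors-David regular. Both of these structural conditions have already been established for $\T$ earlier in this section, so the corollary reduces to a direct citation of prior results, packaged with a reminder of the definition being used.

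Concretely, I would first recall (with a sentence or so of exposition) that in the harmonic-analytic literature on non-Lipschitz domains, "chord-arc" means uniform plus ADR boundary; this is the notion relevant to the subsequent applications to $\db$ on $\T$, and is consistent with the discussion in the introduction which pairs uniformity and ADR regularity as the two geometric ingredients driving the boundary-value theory. I would then invoke Theorem~\ref{th:uniform} to supply the uniform-domain property (with an explicit constant $c<80$) and Lemma~\ref{le:ADR} to supply the Ahlfors-David regularity of $b\T$ with exponent $d-1=3$ in $\RR^4\cong \CC^2$. The corollary follows at once.

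There is essentially no obstacle to overcome in this proof beyond fixing the terminology: the only potentially delicate point is that "chord-arc domain" is used with several slightly different meanings in the literature (ranging from the classical planar definition of a Jordan domain whose boundary is a chord-arc curve, to the higher-dimensional synonym for NTA-plus-ADR or uniform-plus-ADR). For the applications in later sections of the paper—trace theorems, Sobolev embeddings, and the analysis of $\db_s$—the working definition is uniform-plus-ADR, and this is what I would make explicit in a single opening sentence of the proof so that the statement has unambiguous content. After that, the proof is a one-line combination of Theorem~\ref{th:uniform} and Lemma~\ref{le:ADR}.
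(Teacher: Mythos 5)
Your proof establishes only half of what the paper's proof does. The definition of a chord-arc domain in the cited reference \cite{ChordArc2017} is an NTA domain with Ahlfors-David regular boundary, and NTA requires an \emph{exterior} accessibility condition (exterior corkscrew points) in addition to the interior corkscrew and Harnack chain conditions that make up uniformity. The combination ``uniform plus ADR boundary'' that you take as your working definition is what that literature calls a \emph{one-sided} chord-arc domain; indeed, the whole point of \cite{ChordArc2017} is to characterize when a one-sided chord-arc domain is actually a chord-arc domain, so the two notions are deliberately kept distinct there. The paper's one-line proof reads ``Both $\T$ and $\CC^2\setminus\overline\T$ are uniform domains, and $b\T$ is Ahlfors-David regular'': the uniformity of the complement (which in particular supplies exterior corkscrew points for $\T$) is the ingredient your argument omits, and it does not follow from Theorem~\ref{th:uniform} and Lemma~\ref{le:ADR} alone.

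To close the gap you would need to verify the exterior condition, e.g.\ by showing that $\CC^2\setminus\overline\T$ is itself a uniform domain. This is plausible and is essentially what Lemma~\ref{le:complement} provides locally (the complement near the singular point at the origin contains a reflected copy $\{|w|<|z|<\rho\}$ of a scaled Hartogs triangle, and away from the origin and the outer edge the boundary is Lipschitz), but it is a separate geometric fact that must be stated and invoked, not a consequence of the interior uniformity of $\T$. As written, your proof proves that $\T$ is a one-sided chord-arc domain, which is a strictly weaker statement under the definition the paper is using.
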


\begin{proof} Both $\T$ and $\CC^2\setminus\overline\T$
are uniform domains, and $b\T$ is Ahlfors-David regular.
\end{proof}

\subsection{Sobolev theorems} 

Let $\Omega$ be a domain in $\RR^d$.  
For $k\in \NN$ and $1\le p\le \infty$,  denote
by $W^{k,p}(\Omega)$ the Sobolev  space of functions whose weak derivatives 
of order up to $k$ lie in $L^p$. 
When $p=2$, we also use $W^k(\Omega)$ to denote $W^{k,2}(\Omega)$.

 \begin{definition}\label{de:extension} A domain
$\Omega\subset \RR^d$ is called a {\em (Sobolev) extension domain},
if for each $k\in \NN$ 
and $1\le p\le \infty$, there exists a bounded linear operator 
$$\eta_k : W^{k,p} (\Omega)\to W^{k,p}(\RR^d)$$
such that $\eta_kf\big\vert_{\Omega} = f$ for all $f\in W^{k,p}(\Omega)$. 
 \end{definition}
 
It is well known that every bounded Lipschitz domain 
in $\RR^d$ is an extension domain. 
 Our main result in this section is the following:
 \begin{theorem}\label{th:extension} 
The Hartogs triangle ${\T}$ is a Sobolev extension domain.
\end{theorem}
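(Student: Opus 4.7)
The plan is to derive Theorem~\ref{th:extension} as a direct consequence of the two substantive results already established, namely Theorem~\ref{th:uniform} (that $\T$ is a uniform domain) and the Jones extension theorem~\cite{Jones81}, which asserts that every $(\eps,\delta)$ domain in $\RR^d$ admits bounded linear extension operators $W^{k,p}(\Omega)\to W^{k,p}(\RR^d)$ for all $k\in\NN$ and $1\le p\le \infty$. Once these are in place, Theorem~\ref{th:extension} is essentially a citation chain.

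First I would invoke the equivalence noted after Definition~\ref{de:uniform}: for bounded domains, the uniform and $(\eps,\delta)$ notions coincide~\cite{Vai1988}. Since $\T$ is bounded and is uniform by Theorem~\ref{th:uniform}, there exist constants $\eps,\delta>0$ such that $\T$ is an $(\eps,\delta)$ domain in the sense of Definition~\ref{de:uniform}. Applying the Jones extension theorem to $\T$ then produces, for each $k\in\NN$ and each $1\le p\le\infty$, a bounded linear operator $\eta_k\colon W^{k,p}(\T)\to W^{k,p}(\RR^4)$ with $\eta_k f\big\vert_{\T}=f$, which is exactly the content of Definition~\ref{de:extension}.

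The genuine obstacle in this section is not the present theorem but Theorem~\ref{th:uniform} itself, whose proof required the explicit construction in polar coordinates of a connecting curve consisting of a boundary-parallel arc and two line segments, with length and distance-to-boundary estimates tuned precisely to Definition~\ref{de:uniform}. After that work, what Jones's theorem supplies is the construction of the extension operator via a Whitney decomposition of $\T$ and of $\RR^4\setminus\overline\T$, a reflection pairing interior Whitney cubes with exterior ones of comparable size, polynomial approximation on each interior cube, and a smooth partition of unity; the $(\eps,\delta)$ condition is exactly what makes the reflection map well-defined and bi-Lipschitz on scales comparable to the size of the cubes. I would cite this machinery rather than reproduce it, and simply record Theorem~\ref{th:extension} as a corollary of Theorem~\ref{th:uniform} and~\cite{Jones81}.
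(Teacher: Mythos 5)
Your proposal is correct and follows the paper's proof exactly: both deduce the result from Theorem~\ref{th:uniform} together with Jones's extension theorem for $(\eps,\delta)$ domains, the only cosmetic difference being that the paper notes directly that a uniform domain is an $(\eps,\delta)$ domain with $\delta=\infty$ rather than routing through the V\"ais\"al\"a equivalence. The additional sketch of Jones's Whitney-decomposition machinery is accurate but not needed.
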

 
\begin{proof} By Theorem \ref{th:uniform}, $\T$ is a 
uniform domain. Hence $\T$ is an $(\eps,\delta)$ domain with $\delta=\infty$. 
Using \cite[Theorem 1]{Jones81}, every $(\eps, \delta)$ 
domain is an extension domain.
\end{proof}

For later use, we state the corresponding result for
the complement of $\T$.

\begin{lemma}
[Extension from the complement of $\T$]
\label{le:extension annuli} Let 
$\Omega\subset \CC^2$ be a domain
with $\Omega\supset \overline{\T}$.
There exists a bounded linear operator 
$\eta  : W^1(\Omega\setminus\overline{\T})\to W^1(\Omega)$
such that $\eta f\big\vert_{\Omega\setminus\overline\T}=f$.
If, moreover, $\Omega$ is a Lipschitz domain, 
then $\Omega\setminus\overline{\T}$ is an extension domain. In particular, 
$C^\infty(\CC^2)$ is dense in $W^1(\Omega\setminus\overline\T)$.
\end{lemma}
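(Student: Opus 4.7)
The plan is to reduce both claims to Jones's Sobolev extension theorem via Lemma~\ref{le:complement}. For the first assertion, the obstruction to extending $f\in W^1(\Omega\setminus\overline\T)$ across $\overline\T$ lies only in a neighborhood of $\overline\T$, so I would first interpose between $\overline\T$ and $\Omega$ a bounded Lipschitz domain $\Omega'$. For $\rho>1$ sufficiently close to~$1$, the bidisk $\Omega':=\{|z|<\rho,\ |w|<\rho\}$ is Lipschitz, contains $\overline\T$, and satisfies $\overline{\Omega'}\subset\Omega$ (since $\overline\T$ is compact and $\Omega$ is open). Applying Lemma~\ref{le:complement} to the pair $(\Omega',\T)$ in place of $(\Omega,\T)$, the set $\Omega'\setminus\overline\T$ is a uniform domain, hence by \cite{Jones81} a Sobolev extension domain. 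Let $E:W^1(\Omega'\setminus\overline\T)\to W^1(\RR^4)$ denote the corresponding bounded extension operator.

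Given $f\in W^1(\Omega\setminus\overline\T)$, I would set $\tilde f:=E\bigl(f|_{\Omega'\setminus\overline\T}\bigr)\in W^1(\RR^4)$ and define
\[
(\eta f)(x):=\begin{cases}f(x),&x\in \Omega\setminus\overline\T,\\[2pt]
\tilde f(x),&x\in\overline\T.\end{cases}
\]
To verify that $\eta f\in W^1(\Omega)$ I would use the open cover $\Omega=\Omega'\cup(\Omega\setminus\overline\T)$: on~$\Omega'$ the function $\eta f$ coincides with $\tilde f\in W^1(\Omega')$, on $\Omega\setminus\overline\T$ it coincides with~$f$, and the two descriptions agree on the overlap $\Omega'\setminus\overline\T$ because $E$ is an extension. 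Locality of weak derivatives then yields $\eta f\in W^1(\Omega)$, and boundedness of~$\eta$ is immediate from boundedness of~$E$ together with the elementary estimate $\|\eta f\|_{W^1(\Omega)}\le \|f\|_{W^1(\Omega\setminus\overline\T)}+\|E\|\,\|f\|_{W^1(\Omega'\setminus\overline\T)}$.

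For the second assertion, the Lipschitz hypothesis on~$\Omega$ permits Lemma~\ref{le:complement} to be applied to $\Omega$ itself, so that $\Omega\setminus\overline\T$ is directly a uniform domain and therefore a Sobolev extension domain by~\cite{Jones81}. Density of $C^\infty(\CC^2)$ in $W^1(\Omega\setminus\overline\T)$ then follows by a standard argument: extend an arbitrary $f\in W^1(\Omega\setminus\overline\T)$ to $\tilde f\in W^1(\RR^4)$, mollify to obtain $\varphi_n\in C^\infty(\RR^4)$ converging to $\tilde f$ in $W^1(\RR^4)$, and restrict back to $\Omega\setminus\overline\T$.

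The only delicate point, and what I view as the main (though mild) obstacle, is ruling out a singular contribution to the weak derivative of~$\eta f$ along the cut~$b\T$ in the first part. This is precisely what the intermediate Jones extension delivers: because $\tilde f$ arises from a single element of $W^1(\RR^4)$, the two one-sided traces of~$\eta f$ on~$b\T$ automatically coincide, so the piecewise definition introduces no jump across~$b\T$.
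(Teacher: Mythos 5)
Your overall strategy is the same as the paper's: interpose a bounded Lipschitz domain $\Omega'$ between $\overline\T$ and $\Omega$, apply Lemma~\ref{le:complement} together with Jones's theorem to get an extension operator for $\Omega'\setminus\overline\T$, and glue the resulting extension over $\overline\T$ to $f$ on $\Omega\setminus\overline\T$; the gluing and boundedness details you supply, and your treatment of the Lipschitz case and of smooth density, are all fine and match the paper (which is terser on the gluing).

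There is, however, one concrete error: your choice $\Omega'=\{|z|<\rho,\ |w|<\rho\}$ with $\rho>1$ does \emph{not} satisfy $\overline{\Omega'}\subset\Omega$ for a general domain $\Omega\supset\overline\T$. Compactness of $\overline\T$ and openness of $\Omega$ only guarantee that a thin neighborhood of $\overline\T$ lies in $\Omega$, whereas the bidisk of radius $\rho>1$ contains points far from $\overline\T$: for example ${\rm dist}\bigl((1,0),\overline\T\bigr)=\tfrac1{\sqrt2}$, since any $(z,w)\in\overline\T$ has $|z|\le|w|$, so $|z-1|^2+|w|^2\ge(1-|w|)^2+|w|^2\ge\tfrac12$. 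Taking $\Omega$ to be, say, the $0.01$-neighborhood of $\overline\T$ shows your $\Omega'$ need not sit inside $\Omega$, and then $f$ is not even defined on all of $\Omega'\setminus\overline\T$, so the restriction step fails. The fix is minor and is what the paper does: choose $\Omega'$ to be \emph{any} bounded Lipschitz domain with $\overline\T\subset\Omega'$ and $\overline{\Omega'}\subset\Omega$ (such a domain always exists, e.g.\ a regular sublevel set of a smooth function separating the compact set $\overline\T$ from $b\Omega$); Lemma~\ref{le:complement} applies to any such $\Omega'$, not only to bidisks, and the rest of your argument then goes through unchanged.
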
 

\begin{proof}  If $\Omega$ is a bounded Lipschitz domain,
then $\Omega\setminus\overline\T$ is
a uniform domain by Lemma~\ref{le:complement},
and hence a Sobolev extension domain.

For an  arbitrary domain $\Omega\supset\overline\T$, we choose 
a Lipschitz domain $\Omega'$ with
$\Omega\supset \overline{\Omega'}$ such that
$\Omega'\supset \overline\T$. By the first part of the proof,
there exists a bounded linear extension operator
$\eta':W^1(\Omega'\setminus\overline\T)\to W^1(\Omega')$.
For $f\in W^1(\Omega\setminus\overline\T)$, we define
$\eta f$ on $\overline{\T}$ by first restricting
$f$ to $\Omega'\setminus\T$ and then applying $\eta'$.
\end{proof}

From Theorem \ref{th:extension}, we  have the following 
results easily. 

\begin{corollary}\label{co:Sobolev} Let  $W^1(\T)$ denote the Sobolev 
space of $L^2$-functions on $\T$
with weak first-order derivatives 
in $L^2$.  Then the following statements hold:
\begin{enumerate}
\item {\em (Smooth approximation).}\ 
$C^\infty(\overline{\T})$ is dense in $W^1(\T)$.

\item {\em (Sobolev embedding).} \ $W^1(\T)\subset L^4(\T)$,
and the inclusion map is bounded.

\item {\em (Rellich lemma).}\ 
The inclusion $W^1(\T)\subset L^2(\T)$  is compact. 
\end{enumerate}
\end{corollary}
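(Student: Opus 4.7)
The plan is to deduce all three statements from Theorem~\ref{th:extension} by transferring standard facts about $W^1(\RR^4)$ through the extension operator $\eta:=\eta_1\colon W^1(\T)\to W^1(\RR^4)$. As a preparatory step I would fix once and for all a cutoff $\chi\in C_c^\infty(\RR^4)$ that equals $1$ on a neighborhood of $\overline{\T}$ and is supported in some open ball $B\subset\RR^4$. For any $f\in W^1(\T)$, the function $Ef:=\chi\cdot\eta f$ then lies in $W_0^1(B)$, agrees with $f$ on $\T$, and the map $f\mapsto Ef$ is bounded from $W^1(\T)$ into $W_0^1(B)$. All three parts reduce to a standard argument applied to $Ef$.

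For part (1), let $\{\rho_\eps\}_{\eps>0}$ be a standard mollifier on $\RR^4$, and set $f_\eps:=(\rho_\eps*Ef)\big|_{\overline{\T}}$. Each $f_\eps$ is the restriction to $\overline{\T}$ of a smooth, compactly supported function on $\RR^4$, hence belongs to $C^\infty(\overline{\T})$. Since $\rho_\eps*Ef\to Ef$ in $W^1(\RR^4)$ as $\eps\to 0$, restriction to $\T$ yields $f_\eps\to f$ in $W^1(\T)$.

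For part (2), I would invoke the classical Gagliardo--Nirenberg--Sobolev inequality in dimension $d=4$: the critical exponent is $2^*=2d/(d-2)=4$, so the inclusion $W^1(\RR^4)\hookrightarrow L^4(\RR^4)$ is bounded. Chaining with the extension,
\[
\|f\|_{L^4(\T)}\ \le\ \|Ef\|_{L^4(\RR^4)}\ \le\ C\,\|Ef\|_{W^1(\RR^4)}\ \le\ C'\,\|f\|_{W^1(\T)}.
\]

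For part (3), let $\{f_n\}\subset W^1(\T)$ be bounded. Then $\{Ef_n\}$ is a bounded sequence in $W_0^1(B)$, so by the classical Rellich--Kondrachov theorem on the bounded open set $B\subset\RR^4$ it admits a subsequence converging in $L^2(B)$. Restricting to $\T$ produces a subsequence of $\{f_n\}$ convergent in $L^2(\T)$. There is no genuine obstacle here once Theorem~\ref{th:extension} is in hand; the only technical point worth flagging is the need to truncate $\eta f$ by the cutoff $\chi$ before mollifying or before invoking Rellich, so as to land in $W_0^1(B)$ for a fixed bounded ball.
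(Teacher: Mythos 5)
Your proposal is correct and follows essentially the same route as the paper: both deduce all three statements from Theorem~\ref{th:extension} by passing through the extension operator, then mollifying for density, applying the Sobolev inequality on $\RR^4$ (critical exponent $4$) for the embedding, and invoking the classical Rellich lemma on a bounded set containing $\T$ for compactness. Your explicit cutoff $\chi$ to land in $W^1_0(B)$ is a small technical refinement of the paper's appeal to "a ball $\Omega_0\supset\T$," but it does not change the argument.
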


\begin{proof} 
Let $\eta:W^1(\T)\to W^1(\RR^4)$ be the bounded linear
extension operator provided by Theorem~\ref{th:extension}.
Given $f\in W^1(\T)$, set $f_0:=\eta f\in W^1(\CC^2)$.
We regularize $f$ by  convolution   $f_\eps=f_0\ast \phi_\eps$,
where $\{\phi_\eps\}$ is an approximate identity 
such that each $\phi_\eps$ is a smooth function of
compact support with $\int\phi_\eps=1$. 
The restrictions of the smooth 
functions $f_\eps$ to $\T$ converge to $f$ in $W^1(\T)$, 
proving the first claim. 
By the Sobolev inequality on $\CC^2$,
$\|\eta f\|_{L^4(\CC^2)}\le C \|\eta f\|_{W^1(\CC^2)}$,
where $C$ is the Sobolev constant.
Since $\eta f $ agrees with $f$ on $\T$, and
$\eta:W^1(\T)\to W^1(\CC^2)$ is bounded,
this implies the second claim.  Similarly, the Rellich lemma 
holds on $\T$ because it holds on a ball 
$\Omega_0\supset \T$.
\end{proof}

\begin{corollary}[\textbf{Trace}]\label{co:trace}
There  exists a bounded linear operator
$\tau: W^1(\T)\to L^2(b\T)$ with the property that
$\tau f =f\big\vert_{b\T}$ for
every continuous function $f$ on $\overline{\T}$.
\end{corollary}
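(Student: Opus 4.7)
The plan is to construct $\tau$ as $T\circ\eta$, where $\eta:W^1(\T)\to W^1(\RR^4)$ is the Sobolev extension operator of Theorem \ref{th:extension} and $T:W^1(\RR^4)\to L^2(b\T,\sigma)$ is an $L^2$ trace onto $b\T$, with $\sigma$ the $3$-dimensional Hausdorff measure on $b\T$. The geometric input needed for $T$ is the Ahlfors--David regularity furnished by Lemma \ref{le:ADR}.

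The heart of the argument is the a priori estimate $\|F\|_{L^2(b\T,\sigma)}^2\le C\|F\|_{W^1(\RR^4)}^2$ for $F\in C_c^\infty(\RR^4)$. Fix $p\in b\T$ and a smooth radial cutoff $\chi$ supported in $B_1(0)$ with $\chi(0)=1$. Applying the fundamental theorem of calculus to $\chi(t\omega)F(p+t\omega)$ along each ray $\omega\in S^3$ and averaging in Cartesian coordinates yields
\[
F(p) = \int_{B_1(p)}\bigl[\Phi_p(x)F(x)+\Psi_p(x)\cdot\nabla F(x)\bigr]\,dx,
\]
with $|\Phi_p(x)|+|\Psi_p(x)|\le C|x-p|^{-3}$. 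Squaring and applying Cauchy--Schwarz with the splitting $|x-p|^{-3}=|x-p|^{-5/4}\cdot|x-p|^{-7/4}$ gives the pointwise bound $|F(p)|^2\le C\int_{B_1(p)}(|F|^2+|\nabla F|^2)|x-p|^{-5/2}\,dx$, where the $|x-p|^{-7/4}$ factor is integrated out using $\int_{B_1}|y|^{-7/2}\,dy<\infty$ in $\RR^4$. Integrating over $p\in b\T$ and exchanging the order of integration reduces the trace inequality to the uniform Schur-type check
\[
\sup_{x\in\RR^4}\int_{b\T\cap B_1(x)}\frac{d\sigma(p)}{|x-p|^{5/2}}<\infty,
\]
which follows from a dyadic decomposition of $b\T$ around $x$ together with the bound $\sigma(B_r(x)\cap b\T)\le Cr^3$ of Lemma \ref{le:ADR}: the resulting geometric series $\sum_{k\ge 0} 2^{(5/2-3)k}$ converges because $5/2$ is strictly less than the AD dimension $3$. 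Density of $C_c^\infty(\RR^4)$ in $W^1(\RR^4)$ then promotes restriction to a bounded operator $T$, and $\tau:=T\circ\eta$ is bounded.

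To verify $\tau f=f|_{b\T}$ for $f\in C(\overline\T)\cap W^1(\T)$, approximate $f$ by $f_k\in C^\infty(\overline\T)$ in $W^1(\T)$ using Corollary \ref{co:Sobolev}(1). The Jones extension of \cite{Jones81}, being built from averages of $f$ over Whitney cubes of $\T$ and patched via a partition of unity in the complement, preserves continuity; thus $\eta f_k\in C(\RR^4)\cap W^1(\RR^4)$, and by approximating $\eta f_k$ in $W^1(\RR^4)$ by compactly supported smooth functions and passing to a $\sigma$-a.e.\ convergent subsequence of the traces, one identifies $T(\eta f_k)$ with the pointwise restriction $f_k|_{b\T}$. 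Passing to the limit in $k$ gives $\tau f=f|_{b\T}$. The main obstacle is the trace estimate in the second step: the singularity $|x-p|^{-3}$ is at the edge of integrability in $\RR^4$ and the AD bound $\sigma(B_r)\le Cr^3$ is correspondingly sharp, so the estimate closes up only for Cauchy--Schwarz weight exponents in the narrow open interval $(3,4)$, a window capturing exactly the codimension~$1$ of $b\T$ in $\RR^4$ and the single derivative of regularity available.
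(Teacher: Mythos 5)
Your proof is correct, but it takes a genuinely different route from the paper. The paper's proof is a one-line citation: since $\T$ is a uniform domain (Theorem \ref{th:uniform}) with Ahlfors--David regular boundary (Lemma \ref{le:ADR}), the trace operator exists by Jonsson's theorem \cite[Theorem 3]{Jonsson79}. You instead reprove the relevant special case of that theorem from scratch: extend by Jones, derive the representation formula $|F(p)|\le C\int_{B_1(p)}(|F|+|\nabla F|)|x-p|^{-3}\,dx$ by averaging the fundamental theorem of calculus over rays, and close the estimate with a Cauchy--Schwarz splitting plus a dyadic Schur test against $\sigma$. The exponent bookkeeping is right (the splitting needs $2a\in(2,3)$ for the Schur sum and $2b\in(3,4)$ for integrability in $\RR^4$, with $a+b=3$, so the window is nonempty), and it is a nice feature of your argument that only the \emph{upper} AD bound $\sigma(B_r\cap b\T)\le Cr^3$ enters the boundedness; the paper's citation uses the full two-sided regularity. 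What the paper's route buys is brevity and the extra information in Jonsson's theorem (the trace actually lands in a Besov space on $b\T$, not merely $L^2$); what yours buys is a self-contained, elementary proof whose only geometric input is Lemma \ref{le:ADR}. The one soft spot is the consistency step: you assert that the Jones extension maps $C(\overline\T)\cap W^1(\T)$ into $C(\RR^4)$. This is true (the $W^{1,p}$ Jones operator assigns to each exterior Whitney cube the average of $f$ over a reflected cube whose diameter and distance to the boundary are comparable, so these averages converge to $f(p)$ as the cubes shrink toward $p\in b\T$), but it is a structural property of the construction in \cite{Jones81} rather than of the abstract extension operator of Theorem \ref{th:extension}, and deserves a sentence of justification; alternatively one can sidestep it by noting that $T F(p)$ coincides $\sigma$-a.e.\ with the limit of averages of $F$ over $B_r(p)$, and estimating the exterior contribution directly.
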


\begin{proof}  
Since
$\T$ is a uniform domain with Ahlfors-David regular boundary,
the existence of the trace operator 
follows from~\cite[Theorem 3]{Jonsson79}.
\end{proof}

\begin{corollary}[\textbf{Poincar\'e inequality}]
\label{co:Poincare}
There exists a constant $C>0$ such that
$$ 
\|f\|^2 \le C \|d f \|^2 
$$ 
for all $f\in W^1(\T)$ with $(f,1)=0$, where $\|\ \|$ denotes the $L^2$-norm on $T$. 
\end{corollary}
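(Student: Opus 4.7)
The plan is to argue by contradiction using a standard Rellich-Kondrachov compactness argument, which has now been made available on $\T$ by Corollary~\ref{co:Sobolev}.

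Suppose the stated inequality fails. Then for every $n\in\NN$ there exists $f_n\in W^1(\T)$ with $(f_n,1)=0$ and
$$
\|f_n\|_{L^2(\T)}=1,\qquad \|df_n\|_{L^2(\T)}\le \tfrac1n.
$$
The sequence $\{f_n\}$ is bounded in $W^1(\T)$, so by weak compactness a subsequence (still denoted $f_n$) converges weakly in $W^1(\T)$ to some $f\in W^1(\T)$. By the Rellich lemma (Corollary~\ref{co:Sobolev}(3)), the embedding $W^1(\T)\hookrightarrow L^2(\T)$ is compact, so along a further subsequence $f_n\to f$ strongly in $L^2(\T)$. In particular $\|f\|_{L^2(\T)}=1$ and, by continuity of the $L^2$ pairing against the constant function $1$ (note that $|\T|<\infty$), $(f,1)=0$.

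Next I would identify the weak limit of the derivatives. Since $df_n\to 0$ strongly in $L^2$, in particular $df_n\to 0$ in the sense of distributions on $\T$. On the other hand, weak $W^1$ convergence of $f_n$ to $f$ gives $df_n\rightharpoonup df$ in $L^2$, so $df=0$ a.e.\ on $\T$. Because $\T$ is open and connected, a standard mollification argument (or the de\,Rham/DuBois–Reymond lemma applied componentwise) shows that $f$ is a.e.\ constant on $\T$. Combined with $(f,1)=0$ and $|\T|<\infty$, this forces $f\equiv 0$ a.e., contradicting $\|f\|_{L^2(\T)}=1$.

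The only nontrivial ingredient beyond elementary functional analysis is the Rellich compactness of $W^1(\T)\hookrightarrow L^2(\T)$; this is precisely where the non-Lipschitz geometry of $\T$ could have been an obstacle, but it has already been resolved via the extension theorem (Theorem~\ref{th:extension}) and its consequence Corollary~\ref{co:Sobolev}(3). Once that is in hand the argument above is routine.
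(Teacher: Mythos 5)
Your proof is correct and follows essentially the same route as the paper's: both rest on the Rellich compactness of $W^1(\T)\hookrightarrow L^2(\T)$ from Corollary~\ref{co:Sobolev} together with the observation that a $W^1$ function with vanishing differential on the connected domain $\T$, orthogonal to constants, must vanish. The paper phrases the argument as a direct minimization of the Rayleigh quotient $\lambda=\inf\|du\|^2$ rather than as a proof by contradiction, but the content is identical.
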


\begin{proof} 
The proof is the same as for smooth  
domains.  Let
$$
\lambda:=\inf\left \{\|d u\|^2 \ \big\vert\ 
u\in W^{1}(\T), \|u\|=1, (u,1)=0\right\}\,
$$
and consider a minimizing sequence $\{u_\nu\}$.
By the Rellich lemma, we may assume
(after passing to a subsequence) that 
$u_\nu$ converges strongly in $L^2(\T)$, as well as weakly 
in $W^1(\T)$, to some limit $v\in W^1(\T)$.
The function $v$ is non-constant, because
$$
\|v\|=\lim \|u_\nu\|=1\,,\qquad (v,1) =\lim (u_\nu, 1)=0\,.
$$ 
Since $\|d v\|^2\le \lim \|d u_\nu\|^2=\lambda$
by the weak lower semicontinuity of the Dirichlet integral,
$v$ is a minimizer. 
Therefore $\lambda=\|d v\|^2>0$,
and the Poincar\'e inequality holds with $C=\lambda^{-1}$.
\end{proof}

\section{\bf Identity of weak and strong extensions of $\db$}

We collect some basic facts on 
unbounded operators in Hilbert spaces which will be used later. 
	
\subsection{Basic facts from functional analysis}	
We recall the definition of an unbounded linear  operator from a Hilbert space to another.  
By an  {\em operator} $A$ from a Hilbert space $ {H_1}$ to another Hilbert space
$\ {H_2}$ we mean a $\CC$-linear map from a linear subspace $\Dom(A)$ 
of $ {H_1}$ into ${H_2}$. We use the notation 
$$A: {H_1}\dashrightarrow {H_2},$$
to denote the fact that $A$ is defined on a subspace of $ {H_1}$.  Recall that  an
operator is said to be {\em closed} if its graph is 
closed as a subspace of the product
Hilbert space $ {H_1}\times  {H_2}$.
Suppose that $A$ is defined    on all of $ {H_1}$,
then we write $A: {H_1}\rightarrow  {H_2}$. Notice that  if $A$ is defined on the  whole Hilbert space $H_1$, then $A$ has to be a bounded  operator from the closed  graph theorem.

    Let $A$ be a closed densely 
defined operator from $H_1$ to $H_2$. 
Let   $A^*: H_2\dashrightarrow H_1$   be the (Hilbert space) 
adjoint of $A$, defined as follows: 
An element $g\in \Dom(A^*)$ if and only if there 
exists an element $g^*\in H_1$ such that
$$(Af, g)=(f, g^*)\qquad \text{for all }f\in \Dom(A)\,.$$
In this case, $A^*g:=g^*$. Then
$A^*$ is also a densely defined closed operator, and 
$A^{**}=A$.

  We refer the reader to the book of  Riesz-Nagy for 
Hilbert space adjoints (see page 305 in \cite{RieszNagy}).  
We will also  need  the following  lemma
    (see \cite[Theorem 1.1.1]{Hormander65}). 
    
    \begin{lemma}
\label{le:closed equivalent} 
Let $A$ be a closed densely defined operator from one Hilbert space $H_1$ to another $H_2$. Then the following 
    conditions are equivalent:
     \begin{enumerate}
    \item The range of $A$ is closed.
  \item The range of $A^*$ is closed.
  \item $H_1= \Range(A^*)\oplus \Ker(A).$
  \item $H_2= \Range(A)\oplus \Ker(A^*).$
  \end{enumerate}
  \end{lemma}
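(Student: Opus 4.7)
The plan is to deduce the four-way equivalence from two classical ingredients: the orthogonality identities linking the kernel and range of $A$ and $A^*$, and Banach's closed range theorem. Since $A$ is closed and densely defined, $A^*$ is also closed and densely defined, and $A^{**}=A$. I would begin by recording the basic identities
$$\Ker(A^*) = \Range(A)^\perp, \qquad \Ker(A) = \Range(A^*)^\perp.$$
The first is immediate from the defining relation $(Af,g) = (f,A^*g)$: a vector $g \in H_2$ lies in $\Ker(A^*)$ iff $(Af,g)=0$ for all $f\in\Dom(A)$. The second is the same identity applied to $A^*$ in place of $A$, using $A^{**}=A$. Combined with the Hilbert space decomposition $H = \overline{M} \oplus M^\perp$ valid for any linear subspace $M$, these yield
$$H_1 = \overline{\Range(A^*)} \oplus \Ker(A), \qquad H_2 = \overline{\Range(A)} \oplus \Ker(A^*).$$
Dropping the closure bars is equivalent to the corresponding range being closed, so this step already establishes (1) $\Leftrightarrow$ (4) and (2) $\Leftrightarrow$ (3).

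The substantive content is the equivalence (1) $\Leftrightarrow$ (2). I would reduce to the bijective case by introducing the restriction
$$A_0 := A\big\vert_{\Ker(A)^\perp \cap \Dom(A)}: \Ker(A)^\perp \dashrightarrow \overline{\Range(A)},$$
which inherits from $A$ the properties of being closed and densely defined, and which is moreover injective with dense range, and satisfies $\Range(A_0) = \Range(A)$. A direct computation using the orthogonal splittings above identifies the Hilbert space adjoint of $A_0$ (between the Hilbert spaces $\overline{\Range(A)}$ and $\Ker(A)^\perp$) as the analogous restriction of $A^*$, so $\Range(A_0^*) = \Range(A^*)$. Thus closedness of $\Range(A)$ (resp. $\Range(A^*)$) amounts to surjectivity of $A_0$ (resp. $A_0^*$), and since the inverses $A_0^{-1}$ and $(A_0^*)^{-1}$ are closed, this is further equivalent to those inverses being everywhere defined and bounded by the closed graph theorem.

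The main obstacle is the bijective form of the closed range theorem: $A_0^{-1}$ is bounded iff $(A_0^*)^{-1}$ is bounded. I would handle this through the graph identity $(A_0^{-1})^* = (A_0^*)^{-1}$, valid because $A_0$ is closed, densely defined, and injective with dense range, together with the elementary fact that a densely defined operator between Hilbert spaces is bounded iff its Hilbert space adjoint is, with equal operator norms. Chaining these two equivalences closes the loop and delivers (1) $\Leftrightarrow$ (2), completing the proof.
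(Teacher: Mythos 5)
Your argument is correct, but it takes a genuinely different route on the substantive point. The paper handles (1)$\Leftrightarrow$(4) and (2)$\Leftrightarrow$(3) exactly as you do, via $\Ker(A^*)=\Range(A)^\perp$ and $\Ker(A)=\Range(A^*)^\perp$ (the latter using $A^{**}=A$); for the hard equivalence (1)$\Leftrightarrow$(2) it simply cites H\"ormander's Theorem~1.1.1, whose proof runs through the a priori estimate $\|g\|\le C\|A^*g\|$ on $\Dom(A^*)\cap\overline{\Range(A)}$. You instead give a self-contained proof by passing to the injective part $A_0$ of $A$ on $\Ker(A)^\perp$ and invoking the identity $(A_0^{-1})^*=(A_0^*)^{-1}$ together with the closed graph theorem and the fact that a densely defined operator is bounded iff its adjoint is. All the intermediate claims check out: $\Dom(A)\cap\Ker(A)^\perp$ is dense in $\Ker(A)^\perp$ because $\Ker(A)\subset\Dom(A)$ lets one project $\Dom(A)$ into it; $A_0$ is closed and injective with $\Range(A_0)=\Range(A)$; and the computation identifying $A_0^*$ with the corresponding restriction of $A^*$ gives $\Range(A_0^*)=\Range(A^*)$, so closedness of each range is exactly surjectivity of $A_0$ or $A_0^*$. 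What your approach buys is a complete proof from first principles of Hilbert space theory, at the cost of some bookkeeping about domains of restricted operators and the (standard but nontrivial) graph identity for the adjoint of an inverse; what the paper's citation buys is brevity, and H\"ormander's estimate formulation is the form actually used elsewhere in the $L^2$ theory.
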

\begin{proof}
  For a proof that  (1) and (2) are equivalent, see \cite{Hormander65}.
    By definition of the adjoint operator, the range of $A$ is the closure 
of the orthogonal complement of 
    the kernel of $A^*$. Thus  if the range of $A$ is closed, then
it is the orthogonal complement of the kernel of $A^*$.  
Thus  (1) and (4) are equivalent. 
Similarly,  (2) and (3) are equivalent.
\end{proof}

\subsection{Maximal extensions and minimal closures of $\db$}

Let $\Omega$ be a bounded  domain in $\CC^n$.
Let 
\begin{equation}\label{eq:CR smooth} 
\db :C^\infty_{p,q-1}
(\overline\Omega) \to C^\infty_{p,q}(\overline \Omega) \,,
\qquad 0\le p\le n, 1\le q\le n
\end{equation}
be the classical Cauchy-Riemann operator on smooth forms. 
We will use the same symbol, $\db$, to denote
the weak Cauchy-Riemann operator acting on currents.
Since the index $p$ plays no role on $\CC^n$, we will 
make convenient choices for the value of $p$ in our arguments.

Let $L^2_{p,q}(\Omega)$   be the space of square-integrable 
$(p,q)$-forms on $\Omega$. The classical
$\db$ operator on smooth forms can be 
extended to a closed densely defined unbounded 
operator from $L^2_{p,q-1}({\Omega})$ to $L^2_{p,q}({\Omega})$
in several different ways.

\begin{definition} \label{de:dbar}
Let $\Omega$ be a bounded  domain in  $\CC^n$.  
\begin{enumerate}
\item The   {\it weak maximal} extension of $\db$, denoted by
$$
\db: L^2_{p,q-1}({\Omega}) \dashrightarrow L^2_{p,q}({\Omega})\,,
$$
is defined by $f\in \Dom(\db)\cap L^2_{p,q-1}(\Omega)$  
if and only if $\db f\in L^2_{p,q}(\Omega)$ in the distribution sense.  
\item 
 The   {\it strong maximal extension} of $\db$, denoted by
$$\db_s:L^2_{p,q-1}({\Omega})\dashrightarrow L^2_{p,q}({\Omega})\,,$$ 
is the closure in the graph norm of the restriction 
of $\db$ to the smooth forms in $C^\infty(\overline\Omega)$. 
In other words, $f\in \Dom(\db_s)$ if and only if there 
exists a sequence of smooth forms  $f_\nu$ 
in $C^\infty_{p,q-1}(\overline\Omega)$  
such that  $f_\nu\to f$ and $\db f_\nu\to \db f$ in $L^2$.  
\end{enumerate}
\end{definition}

It is easy to check (by smooth approximation) 
that if $f\in \Dom(\db_s)$, then 
$f\in \Dom(\db)$ and $\db  f=\db_s f$.
Hence $\db$ is a closed extension of $\db_s$.
On any bounded Lipschitz domain $\Omega$,
the Friedrichs lemma implies that $\db=\db_s$,
see H\"ormander  \cite{Hormander61,Hormander65} 
(or Lemma 4.3.2 in the book by Chen-Shaw \cite{ChenShaw01}).  

In addition to the maximal extensions, we will 
consider the following minimal closures of the 
Cauchy-Riemann operator.

\begin{definition}
\label{de:dbar c} 
Let $\Omega$ be a bounded  domain in  $\CC^n$.  
 \begin{enumerate}
\setcounter{enumi}{2}
\item The   {\it strong  minimal} closure of $\db$, denoted by
$$\db_c:L^2_{p,q-1}({\Omega})\dashrightarrow L^2_{p,q}({\Omega})$$ 
is the closure in the graph norm of the restriction of $\db$ 
to the smooth compactly supported forms. In other words,
  $f\in \Dom(\db_c)$ if and only if there is a 
sequence of smooth forms  $f_\nu$ in $C^\infty_{p,q-1}(\Omega)$ 
compactly supported in $\Omega$, 
such that  $f_\nu\to f$ and $\db f_\nu\to \db f$ in $L^2$.
 \item  The {\it weak minimal} closure, denoted by
$$\db_{\tilde c}:L^2_{p,q-1}({\Omega})\dashrightarrow L^2_{p,q}({\Omega}) $$
is defined by $f\in \Dom(\db_{\tilde c})\cap L^2_{p,q-1}(\Omega)$  
if and only if $\db f^0$ is in $L^2_{p,q-1}(\CC^2)$, 
where $f^0$ is the extension of $f$ to zero outside $\Omega$ and $\db f^0$ 
is defined in sense of distribution on $\CC^2$.   
\end{enumerate}   
   \end{definition}

As above, it is easy to check 
that $\db_{\tilde c}$ is a closed extension of $\db_c$.
In fact, 
$$ 
\Dom(\db)\ \supset\ \Dom(\db_s)\ \supsetneq\ \Dom(\db_{\tilde c})
\ \supset\ \Dom(\db_c)\,,
$$ 
and the corresponding inclusions hold for the ranges and the
kernels.
The middle inclusion is proper, because
non-zero constant functions lie in the domain
of $\db_s$, but not in the domain of $\db_{\tilde c}$.

\subsection{The operators $\db_s$ and $\db_{\tilde c}$ on functions}

We start with some simple observations about the
Sobolev space $W^1$. Since $\db$ is a first-order differential
operator, $\Dom(\db)\supset W^1$,
and $\Dom(\db_c)\supset W^1_0$, the closure 
of $C_0^\infty(\Omega)$ in $W^1$.

\begin{lemma} \label{le:W1}
For any bounded domain 
$\Omega\subset\CC^n$, we have $\Dom(\db_{\tilde c})\subset 
W^1(\Omega)$.  If, moreover, $\Omega$ is a Sobolev extension domain, 
then $\Dom(\db_s)\supset W^1(\Omega)$.
\end{lemma}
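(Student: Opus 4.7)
The plan is to reduce both inclusions to the elementary observation that, on the whole space $\CC^n$, the Cauchy-Riemann operator on functions is elliptic:
\begin{equation*}
\|\db u\|_{L^2(\CC^n)}^2 \ = \ \tfrac14\, \|\nabla u\|_{L^2(\CC^n)}^2
\end{equation*}
for every $u \in W^1(\CC^n)$. This is immediate from Plancherel: the Fourier symbol of $\p/\p\bar z_j$ is $\tfrac12(i\xi_{x_j} - \xi_{y_j})$, and summing over $j$ gives $\tfrac14 |\xi|^2$.

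For the first inclusion, let $f\in \Dom(\db_{\tilde c})$. By definition the zero extension $f^0$ lies in $L^2(\CC^n)$ and satisfies $\db f^0 \in L^2_{0,1}(\CC^n)$ in the distributional sense. Applying Plancherel to the relation $\widehat{\p f^0/\p\bar z_j}(\xi) = \tfrac12(i\xi_{x_j} - \xi_{y_j})\widehat{f^0}(\xi)$ componentwise, summing over $j$, and using that the left-hand sides belong to $L^2$, gives $|\xi|\,\widehat{f^0}(\xi) \in L^2(\CC^n)$. Hence $f^0 \in W^1(\CC^n)$, and restricting yields $f \in W^1(\Omega)$.

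For the second inclusion, assume $\Omega$ is a Sobolev extension domain and take $f\in W^1(\Omega)$. Extend to $\eta f\in W^1(\CC^n)$, multiply by a smooth cutoff to make it compactly supported, and mollify by convolution with a standard approximate identity $\phi_\eps$. The resulting sequence lies in $C^\infty_0(\CC^n)$ and converges to $\eta f$ in $W^1(\CC^n)$; in particular the restrictions to $\overline\Omega$ belong to $C^\infty(\overline\Omega)$ and approximate $f$ in $W^1(\Omega)$. Since $\db\colon W^1\to L^2$ is bounded, this is also a convergence in the $\db$-graph norm, so $f \in \Dom(\db_s)$.

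The only potentially surprising point is the first inclusion: on a bounded domain, $\db$ alone does not control the full $W^1$ norm. The resolution is that the $\db_{\tilde c}$ hypothesis lives on all of $\CC^n$ through the zero extension, where there is no boundary to produce obstructive terms and ellipticity is restored. The second inclusion is routine once the Sobolev extension operator is in hand.
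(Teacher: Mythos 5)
Your proof is correct and follows essentially the same route as the paper: both inclusions rest on the ellipticity of $\db$ on all of $\CC^n$ applied to the zero extension, and on smooth approximation via the extension operator. The only cosmetic difference is that you establish the identity $\|\db u\|^2=\tfrac14\|\nabla u\|^2$ by Plancherel, whereas the paper mollifies $f^0$ and integrates by parts to get $(\db f_\eps,\db f_\eps)=(\partial f_\eps,\partial f_\eps)$ — the same fact in Fourier versus physical space.
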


\begin{proof}
Let $f\in \Dom(\db_{\tilde c})$, and let $f^0$ be its
trivial extension. By definition,
$f\in L^2(\Omega)$ and $\db f^0 \in L^2_{0,1}(\CC^n)$
in the sense of distributions.
Let $\{\phi_\eps\}$ be an approximate identity such that
$\phi_\eps\in C^\infty_0(B_\eps(0))$,
$\phi_\eps\ge 0$  and $\int \phi_\eps =1$.  
We regularize $f$ by  convolution  
$f_\eps=f^0\ast \phi_\eps$. 
Then $f_\eps \in C_0^\infty(\CC^2)$,
and
\begin{equation}\label{eq:f db} f_\eps\to f^0\,,\quad
\db f_\eps \to \db f^0 \qquad \text{in } L^2(\CC^2).
\end{equation}
Using integration by parts, we have  
\begin{equation}\label{eq:f partial} 
\begin{aligned} (\db f_\eps, \db f_\eps) 
&=-\sum_{j=1}^n  \bigg(\frac {\partial^2  
f_\eps}{\partial z_j\partial \overline z_j}, f_\eps \bigg)
=(\partial f_\eps,\partial f_\eps).
\end{aligned}\end{equation}
It follows that $f_\eps\to f^0$ in $W^1(\CC^2)$,
and hence $f\in W^{1}(\Omega)$.

For the second claim, let $f\in W^1(\Omega)$, where
$\Omega$ is an extension domain.
Then there exists a sequence of smooth functions $f_\nu$
on $\overline{\Omega}$ such that $f_\nu$ converges to $f$
in $W^1(\Omega)$. In particular, $f_\nu\to f$ and $\db f_\nu\to\db f$ 
in $L^2(\Omega)$, that is, $f\in\Dom(\db_s)$ and $\db_sf=\db f$.
\end{proof}

\begin{definition}\label{de:Bergman} The {\em Bergman space} 
$\mathcal H(\T)$ is  the closed  subspace  of $L^2(\T)$ consisting of the
holomorphic functions on $\T$, i.e., $\mathcal H(\T)= \Ker(\db) .$ 
The orthogonal projection $$B:L^2(\T)\to \mathcal H(\T)$$ 
is called the {\em Bergman projection}.  
   \end{definition}

We next analyze the kernel of $\db_s$.
Any function $f\in \mathcal H(\T)$
admits a Laurent expansion of the form
\begin{equation}\label{eq:f laurent}
f =\sum_{j=0}^\infty\sum_{k=-1}^\infty 
a_{jk}\left(\frac z w\right)^jw^{k} 
\end{equation}
that converges in $L^2(\T)$.
The functions 
\begin{equation}
\label{eq:ONB}
v_{jk}(z,w):= \left(\frac z w\right)^jw^{k}\,,
\quad j\ge 0, k\ge -1
\end{equation}
that appear in the expansion are pairwise orthogonal, since their
restrictions to any torus $\{|z|=r,|w|=s\}$ agree 
(up to re-labeling and multiplication by 
constants) with a subset of the standard Fourier basis 
$e^{i(\ell\alpha +m\beta)}$.
By Eq.~\eqref{eq:f laurent} they form
a complete orthogonal system for $\Ker(\db)$.
 
\begin{proposition}\label{prop:Ker} On $\T$, 
we have  $\Ker(\db_s)=\Ker(\db)$  on functions.  
\end{proposition}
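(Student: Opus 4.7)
The inclusion $\Ker(\db_s) \subset \Ker(\db)$ is immediate because $\db_s$ is a restriction of $\db$; the content is the reverse. I would reduce it to a Laurent-basis approximation as follows. Observe that $\Ker(\db_s)$ is closed in $L^2(\T)$, since it is the kernel of a closed operator in the graph norm and that norm agrees with the ambient $L^2$-norm on the kernel. By the Laurent expansion~\eqref{eq:f laurent}, finite linear combinations of the basis elements $v_{jk}(z,w) = (z/w)^j w^k$ with $j \geq 0$, $k \geq -1$ are $L^2$-dense in $\mathcal H(\T) = \Ker(\db)$. It therefore suffices to show that each $v_{jk}$ lies in $\Ker(\db_s)$, and since $v_{jk}$ is holomorphic the task reduces to producing a sequence $g_\nu \in C^\infty(\overline{\T})$ with $g_\nu \to v_{jk}$ in $L^2(\T)$ and $\db g_\nu \to 0$ in $L^2(\T)$.

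The natural ansatz is $g_\nu := \chi_\nu(w)\, v_{jk}(z,w)$, where $\chi_\nu$ depends only on $|w|$ and vanishes in a neighborhood of $w = 0$. Then $g_\nu \in C^\infty(\CC^2) \subset C^\infty(\overline{\T})$, because $v_{jk}$ is smooth on $\CC^2 \setminus \{w=0\}$ and $\chi_\nu$ kills any singularity at the origin. Provided $0 \leq \chi_\nu \leq 1$ and $\chi_\nu \to 1$ pointwise on $\{w \neq 0\}$, dominated convergence (with dominating function $|v_{jk}|^2 \in L^1(\T)$) gives $g_\nu \to v_{jk}$ in $L^2(\T)$ immediately.

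The main obstacle is to arrange $\|\db g_\nu\|_{L^2(\T)} = \|(\db \chi_\nu)\, v_{jk}\|_{L^2(\T)} \to 0$. A bump cutoff $\chi_\nu(w) = \psi(\nu|w|)$ of transition width $\sim 1/\nu$ fails on the singular modes $v_{j,-1}(z,w) = z^j/w^{j+1}$: the bound $|\db\chi_\nu| = O(\nu)$ is concentrated in the annulus $|w| \sim 1/\nu$, precisely where $|v_{j,-1}| \sim 1/|w| \sim \nu$ peaks, producing an $O(1)$ contribution that does not decay. My cure would be a \emph{logarithmic} cutoff
\[
\chi_\nu(w) := \sigma\!\left(\frac{\log|w| + \log \nu}{\log \nu}\right),\qquad \sigma \in C^\infty(\RR),\ \sigma \equiv 0 \text{ on }(-\infty,0],\ \sigma \equiv 1 \text{ on }[1,\infty),
\]
which is supported on $\{|w| \geq 1/\nu\}$, equals $1$ at $|w| = 1$, and satisfies $|\db \chi_\nu(w)| \leq C/(|w| \log \nu)$ on $\{1/\nu < |w| < 1\}$. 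Combined with the elementary bound $|v_{jk}(z,w)| \leq |w|^k$ on $\T$ (immediate from $|z| < |w|$), polar coordinates yield
\[
\int_\T |\db g_\nu|^2\, dV \;\leq\; \frac{C}{\log^2 \nu} \int_{\T \cap \{1/\nu < |w| < 1\}} |w|^{2k-2}\, dV \;\leq\; \frac{C'}{\log^2 \nu} \int_{1/\nu}^1 \rho^{2k+1}\, d\rho.
\]
For $k \geq 0$ the $\rho$-integral is $O(1)$; in the worst case $k = -1$ it equals $\log \nu$. Either way, $\|\db g_\nu\|_{L^2(\T)}^2 = O(1/\log \nu) \to 0$, which completes the approximation and hence the proof.
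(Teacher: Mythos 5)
Your proposal is correct, and it shares the paper's overall reduction -- $\Ker(\db_s)$ is $L^2$-closed and the monomials $v_{jk}$ of Eq.~\eqref{eq:ONB} span a dense subspace of $\Ker(\db)$, so it suffices to put each $v_{jk}$ into $\Ker(\db_s)$ -- but the key construction is genuinely different. The paper splits into two cases: for $k\ge 0$ it observes $v_{jk}\in W^1(\T)$ and invokes Lemma~\ref{le:W1} (hence the Sobolev extension theorem, Theorem~\ref{th:extension}) to get $W^1(\T)\subset\Dom(\db_s)$; for $k=-1$ it damps $u=v_{j,-1}$ by the multiplicative weight $(|w|/\delta)^{\delta}$ on the sub-triangle $\{|w|<\delta\}$, which again lands only in $W^1(\T)$ and so still routes through Lemma~\ref{le:W1}, with a clean scaling giving $\|\db u_\delta\|=O(\delta)$. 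You instead produce honest $C^\infty(\overline\T)$ approximants $\chi_\nu v_{jk}$ directly, handling all $j,k\ge -1$ uniformly; the logarithmic cutoff is exactly the right choice, since the gain of $1/\log^2\nu$ from $|\db\chi_\nu|\lesssim(|w|\log\nu)^{-1}$ beats the worst-case divergence $\int_{1/\nu}^1\rho^{-1}\,d\rho=\log\nu$ of the $k=-1$ modes (your volume computation $\int|w|^{2k-2}\,dV\sim\int\rho^{2k+1}\,d\rho$ on $\T$ is correct). What your route buys is self-containedness: it uses nothing beyond the Laurent expansion and an explicit cutoff, and in particular does not depend on $\T$ being a uniform/extension domain. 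What it gives up is the slower rate $O((\log\nu)^{-1/2})$ versus the paper's $O(\delta)$ (irrelevant here) and the reusable intermediate fact $W^1(\T)\subset\Dom(\db_s)$, which the paper exploits elsewhere.
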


\begin{proof} We will show that
$\Ker(\db_s)$ contains the functions
$v_{jk}$ from Eq.~\eqref{eq:ONB}.
Since $v_{jk}\in W^1(\T)$ for $j,k\ge 0$,
Lemma~\ref{le:W1} implies that
$$
v_{jk}\in \Ker(\db)\cap W^1(\T)\subset \Ker(\db_s)\,,\qquad j\ge 0, k\ge 0\,.
$$
For $k=-1$, fix $j\ge 0$ and set $u:=v_{j,-1}$.
Given $0<\delta\le 1$, consider the subdomain 
$$\T_\delta :=\left\{(z,w)\in \T\ \big\vert\  |z|<|w|<\delta\right\}\,,
$$
and define the function
\begin{equation}\label{eq:u delta} u_\delta = 
\begin{cases} 
\big(\frac {|w|}{\delta}\big)^\delta u, 
\qquad &\text{on } \T_\delta,\\
u, &\text{on } \T\setminus \T_\delta.
\end{cases}
\end{equation}
Clearly, $|u_\delta|\le |u|$, and $u_\delta\to u$ in
$L^2(\T)$ by dominated convergence.

By construction, $u_\delta$ is piecewise $C^1$.
Its first-order partial derivatives 
are pointwise bounded by $ C|w|^{-2+\delta}$, where $C$ depends on $j$.
For $\delta>0$ this is square integrable, and 
$u_\delta\in W^1(\T)$.  Therefore $u_\delta\in \Dom (\db_s)$
and $\db_s u_\delta =\db u_\delta$.
Since $\db u=0$, we see that
$ \db_s u_\delta =u \, \db\bigl(\tfrac{|w|}{\delta}\bigr)^\delta$
on $\T_\delta$, and vanishes on the complement.
By scaling, 
$$ 
\|\db_s u_\delta\|_{L^2(\T)} 
= \|\db u_\delta\|_{L^2(\T_\delta)}
= \delta\|\db u_1\|_{L^2(\T)}\to 0
$$
as $\delta\to 0$.  Hence $u\in\Dom(\db_s)$ and $\db_s u=0$.

Thus $\Ker(\db_s)$ contains 
an orthonormal basis of $\Ker(\db)$.
Since $\Ker(\db_s)$ is a closed subspace of
$\Ker(\db)$, the two spaces agree.
\end{proof}

\begin{proposition}\label{prop:W=S 2} On 
$\T$, we have $\db_{\tilde c}=\db_c$ on functions.
\end{proposition}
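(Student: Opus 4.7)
The plan is to show the non-trivial inclusion $\Dom(\db_{\tilde c}) \subset \Dom(\db_c)$; the reverse inclusion is immediate since any $\phi \in C_0^\infty(\T)$ lies in $\Dom(\db_{\tilde c})$. Given $f \in \Dom(\db_{\tilde c})$, the computation in the proof of Lemma~\ref{le:W1} shows that the trivial extension $f^0$ lies in $W^1(\CC^2)$ and is supported in $\overline\T$. My aim is to approximate $f^0$ by functions in $C_0^\infty(\T)$ in the $W^1(\CC^2)$-norm; this automatically gives $L^2$-convergence of the approximants to $f$ on $\T$ together with $L^2$-convergence of their $\db$-derivatives, establishing that $f \in \Dom(\db_c)$.

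To build the approximants I would combine two steps adapted to the geometry of $\T$. First, apply the non-isotropic dilation $\Phi_\eps(z,w) := ((1-\eps)^2 z, (1-\eps) w)$; for $(z,w) \in \overline\T$ with $|w| > 0$ one has $(1-\eps)^2 |z| \le (1-\eps)^2 |w| < (1-\eps)|w|$ and $(1-\eps)|w| \le 1-\eps < 1$, so $\Phi_\eps(\overline\T) \setminus \{(0,0)\} \subset \T$ with strict separation from the smooth part of $b\T$. The dilate $\tilde f_\eps := f^0 \circ \Phi_\eps^{-1}$ therefore touches $b\T$ only at the singular vertex, and continuity of linear changes of variables on Sobolev spaces yields $\tilde f_\eps \to f^0$ in $W^1(\CC^2)$. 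Second, introduce the smooth cutoff $\chi_\eps(z,w) := \tilde\chi(|w|/\eps)$ with $\tilde\chi$ vanishing on $[0,1/2]$ and equal to $1$ on $[1,\infty)$. Then $g_\eps := \chi_\eps \tilde f_\eps$ is compactly supported in $\T$, and mollifying by $\phi_{\delta(\eps)}$ with $\delta(\eps)$ small enough in terms of the positive distance from its support to $b\T$ produces a sequence in $C_0^\infty(\T)$. A diagonal argument then selects the desired approximants converging to $f^0$ in $W^1(\CC^2)$.

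The main obstacle is verifying $g_\eps \to f^0$ in $W^1(\CC^2)$. The $L^2$-piece is routine. For the gradient one has to bound the commutator term $\tilde f_\eps\, d\chi_\eps$, which is supported in the narrow shell $E_\eps := \{\eps/2 < |w| < \eps\}$ and carries a factor of order $1/\eps$. A naive use of the Sobolev embedding $W^1(\CC^2) \hookrightarrow L^4$ yields only a uniformly bounded estimate for $\eps^{-2}\|\tilde f_\eps\|_{L^2(E_\eps)}^2$, falling short by exactly one power of $\eps$. The key refinement is to exploit the support condition $|z| \le (1-\eps)|w|$: for almost every $w$ with $|w| < \eps$, the slice $\tilde f_\eps(\cdot, w)$ lies in $W^{1,2}_0$ of the disc of radius $(1-\eps)|w|$, and the one-variable Poincar\'e inequality gives
\[
\int_{|z| < (1-\eps)|w|} |\tilde f_\eps(z,w)|^2\, dA_z \ \le\ C |w|^2 \int_{|z| < (1-\eps)|w|} |\nabla_z \tilde f_\eps(z,w)|^2\, dA_z.
\]
Integrating in $w$ over $E_\eps$ yields $\|\tilde f_\eps\|_{L^2(E_\eps)}^2 \le C\eps^2 \|\nabla\tilde f_\eps\|_{L^2(E_\eps)}^2$, which tends to zero by absolute continuity of the integral and exactly absorbs the $\eps^{-2}$ factor. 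Identifying this slice-by-slice Poincar\'e argument, which uses in an essential way the product-like conic structure of $\T$ near the singular vertex, is the subtle step of the proof.
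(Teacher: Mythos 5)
Your proof is correct, but it takes a genuinely different route from the paper's. The paper never leaves $\T$: it observes that $\Dom(\db_{\tilde c})\subset W^1(\T)\subset L^4(\T)$, multiplies $f$ by an \emph{isotropic} cutoff $\chi_\delta$ vanishing on $B_\delta(0)$ with $|d\chi_\delta|\le C\delta^{-1}$, and disposes of the commutator by H\"older's inequality, $\|(\db\chi_\delta)f\|^2\le\|\db\chi_\delta\|_{L^4}^2\,\|f\|_{L^4(B_{2\delta}\cap\T)}^2$: the first factor is $O(1)$ by scaling in real dimension four, and the second is $o(1)$ by absolute continuity of $\int|f|^4$. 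What remains is supported in the Lipschitz domain $\T\setminus\overline{B_\delta(0)}$, where the classical Friedrichs approximation applies. You instead work with the trivial extension $f^0\in W^1(\CC^2)$, push its support into $\T$ by an anisotropic dilation, cut off in $|w|$ alone, and control the commutator by a slice Poincar\'e inequality in the $z$-disc of radius comparable to $|w|$; this handles the smooth part of $b\T$ by the dilation rather than by reduction to a Lipschitz subdomain, and it avoids the Sobolev embedding altogether. Both arguments are sound. One remark: your assessment that the naive $L^4$ route ``falls short by exactly one power of $\eps$'' is too pessimistic. The set $E_\eps\cap\{|z|\le|w|\}$ shrinks to the origin, so $\|\tilde f_\eps\|_{L^4(E_\eps)}$ is in fact $o(1)$ rather than merely $O(1)$ (combine $\tilde f_\eps\to f^0$ in $L^4(\CC^2)$ with absolute continuity of $\int|f^0|^4$), and the H\"older estimate then closes on its own --- this is precisely the mechanism the paper exploits. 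Your slice Poincar\'e step is therefore not strictly necessary, though it is a clean and self-contained alternative.
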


\begin{proof}
Since $\db_{\tilde c}$ is an extension of $\db$,
we have that $\Dom(\db_c)\subset \Dom(\db_{\tilde c})$. 
We now establish the reverse inclusion. 
By Lemma~\ref{le:W1} and the Sobolev Embedding Theorem,
\begin{equation}\label{eq:f W1}\Dom(\db_{\tilde c})
\subset W^1(\T)\subset L^4(\T)
\end{equation}

Given $f\in \Dom(\db_{\tilde c})$,
we approximate $f$ by a function
that vanishes near the singular point at the origin.
Let $\chi_\delta$ be a smooth
cut-off function such that $\chi_\delta=1$ outside the
 ball $B_{2\delta}(0)$, $\chi_\delta$ vanishes on $B_{\delta}(0)$,
and its differential satisfy the pointwise bound
$|d \chi_\delta | \le C\delta^{-1}$ 
where $C$ is a constant independent of $\delta$. 

By the chain rule,
\begin{equation} \db (\chi_\delta f)=(\db \chi_\delta) f+\chi_\delta \db f\,.
\end{equation}
It is clear that 
$\chi_\delta f\to f$ and $\chi_\delta\db f\to \db f$
in $L^2$ as $\delta\to 0$. 

It remains to show that $(\db \chi_\delta) f \to 0$.
By the Cauchy-Schwarz inequality, we have 
$$
\int_{\T} |\db (\chi_\delta) f|^2 dV 
 \le  \bigg(  \int_{B_{2\delta}(0)\cap \T}   |\db\chi_\delta|^4\, 
dV\bigg)^{\frac 12}
\bigg( \int_{B_{2\delta}(0)\cap \T}|f|^4dV\bigg)^{\frac 12}\,.
$$
The first factor is bounded independently of $\delta$.
Since $f\in L^4(\Omega)$, it follows that
$$\|\db(\chi_\delta) f\|^2\le  \tilde C  
\bigg( \int_{B_{2\delta}(0)\cap \T}|f|^4dV\bigg)^{\frac 12} \to 0\,.$$ 
Therefore $\db(\chi_\delta f)\to\db f$ as $\delta\to 0$.
  
We have approximated $f\in\Dom(\db_{\tilde c})$ 
in the graph norm of $\db$
by $ \chi_\delta f$. Since $\chi_\delta f$
is supported
in the bounded Lipschitz domain
$\T  \setminus \overline{B_\delta(0)}$,
it can be further approximated by compactly 
supported functions in $\T$. This proves that $f\in \Dom(\db_c)$. 
\end{proof}

\subsection{Weak equals strong}

We need two more tools, Serre duality  and Dolbeault cohomology.
$L^2$ Serre duality establishes
a relation between $\db$ and $\db_c$, and 
correspondingly between $\db_s$ and $\db_{\tilde c}$.
Denote by
$\star :L^2_{p,q}(\Omega)\to L^2_{n-p,n-q}(\Omega)$ the 
Hodge star operator. 

\begin{lemma}\label{le:dual} Let $\Omega$ be a bounded domain in  
$\CC^n$. Then $\db_c=-\star\db^*\star$.
\end{lemma}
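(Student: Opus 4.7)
The plan is to realize this identity as an instance of $L^2$ Serre duality, combining the formal-adjoint calculation on smooth forms with the abstract fact that the Hilbert-space adjoint of a strong minimal closure equals the weak maximal extension of the formal adjoint.

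First, I would verify the pointwise integration-by-parts identity: for any smooth compactly supported $(p,q-1)$-form $\phi$ and any smooth compactly supported $(n-p,n-q+1)$-form $\psi$ on $\Omega$, Stokes' theorem (with no boundary contribution, since the supports are compactly contained in $\Omega$) yields
$$(\db\phi,\star\psi)\ =\ -(\phi,\star\db\psi)\,.$$
This is a direct calculation in the standard Hermitian metric on $\CC^n$ and says that $-\star\db\star$ is the formal adjoint of $\db$ on smooth compactly supported forms.

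Second, I would identify the Hilbert-space adjoint of $\db_c$ with the weak maximal operator $-\star\db\star$. Since $\db_c$ is by definition the graph closure of $\db$ restricted to $C^\infty_0(\Omega)$-forms, a form $\psi$ lies in $\Dom((\db_c)^*)$ iff there exists $g \in L^2$ with $(\db\phi,\psi)=(\phi,g)$ for every smooth compactly supported $\phi$. By the first step, this forces $g = -\star\db\star\psi$ in the sense of distributions, and conversely any $\psi$ for which $-\star\db\star\psi \in L^2$ satisfies the defining property of $\Dom((\db_c)^*)$. Hence $(\db_c)^* = -\star\db\star$, where $\db$ on the right is understood in the weak maximal sense on forms of the complementary bidegree.

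Third, I would invoke the general Hilbert-space calculus. The operator $\db_c$ is closed and densely defined, so $(\db_c)^{**}=\db_c$. The Hodge star $\star$ is a unitary isomorphism of the relevant $L^2$ spaces (being its own inverse up to sign), so conjugation by $\star$ commutes with taking adjoints: $(\star\db\star)^* = \star\db^*\star$. Combining this with the previous step gives
$$\db_c \ =\ ((\db_c)^*)^* \ =\ (-\star\db\star)^* \ =\ -\star\db^*\star,$$
which is the claim.

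The main technical point is the second step: carefully verifying how the graph closure of the minimal realization of $\db$ dualizes to the maximal realization of its formal adjoint. This rests on a short density argument reducing the adjoint condition from all of $\Dom(\db_c)$ to $C^\infty_0(\Omega)$, together with careful tracking of bidegrees under $\star$. The sign conventions for $\star$ and the formal-adjoint identity are the other bookkeeping point, but the algebraic structure of the argument is the standard packaging of $L^2$ Serre duality, and no new analytic input is needed.
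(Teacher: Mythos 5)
Your argument is correct and is essentially the standard proof of this lemma; the paper itself does not prove it but simply cites \cite[Proposition 1]{ChakrabartiShaw12} and \cite[Lemma 2.2]{LaurentShaw15}, where exactly this reasoning (the adjoint of the minimal closure is the weak maximal extension of the formal adjoint, followed by double-adjoint and unitarity of $\star$) is carried out. The only point to watch is the bidegree-dependent signs in $\star\star=\pm1$ on the two different form degrees, which you have correctly flagged as bookkeeping.
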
 
 \begin{proof}
See  \cite[Proposition 1]{ChakrabartiShaw12} or \cite[Lemma 2.2] {LaurentShaw15}.     
\end{proof}     

\begin{lemma}\label{le:dual T} 
On $\T$, we have $\db_{\tilde c}=-\star \db_s^* \star $.
\end{lemma}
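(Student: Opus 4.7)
The plan is to deduce Lemma~\ref{le:dual T} from the $L^2$ Serre duality theorem of \cite{LaurentShaw13}, which asserts that on any bounded rectifiable domain the Hilbert-space adjoint of the strong maximal extension $\db_s$ is given by $-\star\db_{\tilde c}\star$. Granting this, I would take $\star$ on both sides, use that $\star$ is a unitary involution (up to a sign depending on the bidegree) and that $\db_s$ is closed and densely defined so that $\db_s^{**}=\db_s$, and then rearrange to obtain the desired identity $\db_{\tilde c}=-\star\db_s^*\star$. This step is the strong/weak analogue of Lemma~\ref{le:dual}, which handles the classical pair $(\db,\db_c)$, and uses no further information about the domain beyond the duality theorem.

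The step that requires work is to verify that $\T$ satisfies the rectifiability hypothesis of \cite{LaurentShaw13}. This is precisely what the previous subsection was set up to supply. The boundary $b\T$ is the union of a piece of the cone $\{|z|=|w|,\ |w|\le 1\}$ and a piece of the torus $\{|w|=1,\ |z|\le 1\}$, each parametrized by an explicit smooth map from a region of $\RR^3$; and Lemma~\ref{le:ADR} gives Ahlfors--David regularity with the correct exponent $3=2n-1$, which in particular yields $\sigma(b\T)<\infty$ together with the quantitative control of the surface measure near the singular point required by the integration-by-parts arguments underlying the duality theorem.

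The main obstacle is the non-Lipschitz vertex $(0,0)\in b\T$, at which the classical Friedrichs approximation argument used by H\"ormander fails. However, the vertex is a single point and hence has vanishing three-dimensional Hausdorff measure, and the ADR bound $\sigma(B_\rho(0)\cap b\T)\le c\rho^3$ forces the integration-by-parts boundary contribution concentrated near the vertex to vanish in the limit. This is the essential content of the theorem of \cite{LaurentShaw13} specialized to our situation, so the verification reduces to checking the rectifiability hypothesis (done above) and invoking their result, completing the proof.
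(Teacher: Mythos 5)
Your proposal is correct and follows essentially the same route as the paper: the paper's proof is a one-line appeal to the duality theorem of \cite{LaurentShaw13} for the pair $(\db_s,\db_{\tilde c})$ on domains with rectifiable boundary, together with the observation that $b\T$ is rectifiable. Your additional verification that $b\T$ is a finite union of smoothly parametrized pieces (hence rectifiable) and your use of Lemma~\ref{le:ADR} simply make explicit what the paper takes for granted.
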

\begin{proof} Since the boundary of $\T$ is rectifiable, the weak 
minimal closure  $\db_{\tilde c}$ is dual to the strong
maximal extension $\db_s$ (see \cite{LaurentShaw13}). 
\end{proof}

\begin{definition}
For $0\le p\le n$ and $0\le q\le n$, the 
{\em $L^2$ Dolbeault cohomology groups}
are  defined by
$$H^{p,q}_{L^2,\db}(\Omega) = \frac {\{f\in L^2_{p,q}(\Omega)\mid \db f=0\} } {\{f\in L^2_{p,q}(\Omega)\mid f=\db u \text{ for some }u\in L^2_{p.q-1}(\Omega)\}}.$$
Similarly, we define $H^{p,q}_{L^2,\db_s}(\Omega)$ by substituting $\db$ 
with $\db_s$. 
\end{definition}

When $\Omega$ is a bounded   
pseudoconvex domain in $\CC^n$, the $L^2$ theory for $\db$ is completely 
known from  H\"ormander's $L^2$ theorem for $\db$ 
(see \cite{Hormander65}). The key result is that
$$ H^{p,q}_{L^2, \db}(\Omega)=0\,,\qquad 1\le p\le n, 1\le q<n\,.
$$

For the strong maximal extension $\db_s$ on
a pseudoconvex domain with rectifiable boundary,
it was proved in~\cite{LaurentShaw13}
that either $H^{0,1}_{L^2, \db_s}(\Omega)=0$, or 
$H^{0,1}_{L^2, \db_s}(\Omega)$ is not Hausdorff. 

 \begin{theorem}\label{th:closed strong} 
On $\T$, the  strong maximal extension $\db_s$ 
of the Cauchy-Riemann operator
has closed range.
\end{theorem}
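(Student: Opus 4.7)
The plan is to deduce closed range of $\db_s$ at bidegree $(0,1)$, i.e.\ for $\db_s : L^2_{0,1}(\T) \to L^2_{0,2}(\T)$, by chasing closed-rangedness around the Hodge-star duality chain that links the four closures $\db$, $\db_s$, $\db_c$, $\db_{\tilde c}$ to each other and, ultimately, to H\"ormander's $L^2$-theorem on the pseudoconvex~$\T$. Proposition~\ref{prop:W=S 2} is the bridge between the two halves of the chain.

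I would start from the observation that, since $\T$ is bounded and pseudoconvex, H\"ormander's $L^2$ existence theorem gives closed range for $\db : L^2_{0,1}(\T) \to L^2_{0,2}(\T)$, and hence (by Lemma~\ref{le:closed equivalent}) for $\db^*$. Combining the identity $\db_c = -\star \db^* \star$ from Lemma~\ref{le:dual} with the natural Hilbert-space isometry that splits $L^2_{p,q}(\T)$ into orthogonal copies of $L^2_{0,q}(\T)$ indexed by the $dz_I$ with $|I|=p$, under which each of $\db$, $\db_s$, $\db_c$, $\db_{\tilde c}$ acts diagonally on the $d\bar z$-coefficients (so that the trivial bundle twist $(0,q)\leftrightarrow(2,q)$ preserves closed-rangedness), this transfers to closed range of $\db_c : L^2(\T) \to L^2_{0,1}(\T)$ on functions. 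Now Proposition~\ref{prop:W=S 2} identifies $\db_c$ with $\db_{\tilde c}$ on functions, so $\db_{\tilde c}: L^2(\T) \to L^2_{0,1}(\T)$ also has closed range. Reversing the duality, since $\T$ has rectifiable boundary Lemma~\ref{le:dual T} gives $\db_{\tilde c} = -\star \db_s^* \star$; applying the trivial bundle twist once more, together with Lemma~\ref{le:closed equivalent}, yields closed range for $\db_s : L^2_{0,1}(\T) \to L^2_{0,2}(\T)$, as required.

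Once Proposition~\ref{prop:W=S 2} is in hand, the rest of the argument is functional-analytic bookkeeping. The main conceptual obstacle is to arrange this bookkeeping so that Proposition~\ref{prop:W=S 2} is invoked at the function level, where it is proved, and \emph{not} at the $(0,1)$-form level, where no analogous identity between $\db_c$ and $\db_{\tilde c}$ is established (the proof of Proposition~\ref{prop:W=S 2} rests on $\Dom(\db_{\tilde c})\subset W^1$ via Lemma~\ref{le:W1}, which fails for $(0,1)$-forms because $\db$ only controls the antiholomorphic derivatives). The trivial bundle twist between $(0,q)$- and $(2,q)$-forms on $\CC^2$ is precisely the device that aligns the Hodge stars in Lemmas~\ref{le:dual} and~\ref{le:dual T} so that the duality chain lands on functions at exactly the right step.
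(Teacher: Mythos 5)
Your argument for $\db_s : L^2_{0,1}(\T)\dashrightarrow L^2_{0,2}(\T)$ is correct and is essentially the paper's own argument for that level: Proposition~\ref{prop:W=S 2} together with Lemmas~\ref{le:dual} and~\ref{le:dual T} converts $\db_c=\db_{\tilde c}$ on functions into the identity $\db_s=\db$ on $(2,1)$-forms (the paper's Eq.~\eqref{eq:2,2}), whence the range is all of $L^2_{2,2}(\T)$ by H\"ormander and in particular closed; your ``bundle twist'' is exactly the paper's remark that the index $p$ plays no role on $\CC^n$.

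The gap is that this is only half of the theorem. As the paper proves and uses it, the statement asserts closed range of $\db_s:L^2_{p,q-1}(\T)\dashrightarrow L^2_{p,q}(\T)$ for $q=1$ as well as $q=2$, and it is the $q=1$ case --- $\db_s:L^2(\T)\dashrightarrow L^2_{0,1}(\T)$ --- that is needed immediately afterwards in Proposition~\ref{prop:vanishing harmonic} to conclude that $H^{0,1}_{L^2,\db_s}(\T)$ is Hausdorff and hence zero. Your duality chain cannot be re-run for this case: it would require $\db_c=\db_{\tilde c}$ on $(2,1)$-forms, which, as you yourself observe at the end of your proposal, is not available because Lemma~\ref{le:W1} fails for forms. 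The paper instead uses the \emph{other} function-level input, Proposition~\ref{prop:Ker} ($\Ker(\db_s)=\Ker(\db)=\mathcal H(\T)$ on functions, proved via the Laurent-series estimates): by Lemma~\ref{le:closed equivalent}, $\Range(\db_s^*)\subset(\Ker(\db_s))^\perp=(\Ker(\db))^\perp=\Range(\db^*)$, and the last space is closed by H\"ormander's $L^2$-theory; the reverse inclusion $\Range(\db_s^*)\supset\Range(\db^*)$ holds because $\db$ extends $\db_s$, so $\Range(\db_s^*)=\Range(\db^*)$ is closed, and Lemma~\ref{le:closed equivalent} applied once more gives closed range of $\db_s$ itself. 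You need to supply this argument (or a substitute for it) to complete the proof.
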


\begin{proof} 
We will show that $\db_s:L^2_{p,q-1}({\T})\dashrightarrow L^2_{p,q}({\T})$
has closed range for $p=0,1,2$ and $q=1,2$.
As noted above, the value of $p$ plays no role here.

\smallskip 
{\em q=2:} \ Take $p=2$. By Proposition~\ref{prop:W=S 2},
$\db_{\tilde c}=\db_c$ on functions.
By Lemmas~\ref{le:dual} and~\ref{le:dual T}, 
this is equivalent to 
\begin{equation} \label{eq:2,2}
\db_s=\db: L^2_{2,1}(\T)\dashrightarrow L^2_{2,2}(\T)\ .
\end{equation}
 In particular, $\Range(\db_s)=\Range(\db)= L^2_{2,2}(\T)$,
which is closed. 

\smallskip 
{\em q=1:}\  Take $p=0$, and consider 
$\db_s:L^2(\T)\dashrightarrow L^2_{0,1}(\T)$.
By combining Proposition~\ref{prop:Ker} with
Lemma~\ref{le:closed equivalent}, we see that
$$ \Range(\db_s^*)
\subset (\Ker(\db_s))^\perp 
= (\Ker(\db))^\perp 
= \Range(\db^*)\,,$$
where we have used that
${\Range(\db^*)}\subset L^2(\T)$ is closed by H\"ormander's $L^2$-theory.
Since $\db$ is an extension of $\db_s$, we also have the
reverse inclusion
$${\Range(\db_s^*)}\supset \Range(\db^*)\,.  $$ 
Therefore $\Range(\db_s^*)=\Range(\db^*)\subset L^2(\T)$.
By Lemma~\ref{le:closed equivalent}, 
$\db_s :L^2(\T)\dashrightarrow L^2_{0,1}(\T)$ 
has closed range as well.  
\end{proof}
 
\begin{proposition}
\label{prop:vanishing harmonic}  $H^{p,1}_{L^2,\db_s}(\T)=0$ 
for $0\le p\le 2$.
\end{proposition}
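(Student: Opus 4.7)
The plan is to reduce the claim to the vanishing of a dual cohomology group for $\db_{\tilde c}$, and then to solve the resulting $\db$-Cauchy problem via the classical Hartogs extension theorem in $\CC^2$.

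Since $\db_s:L^2_{p,0}(\T)\to L^2_{p,1}(\T)$ has closed range by Theorem \ref{th:closed strong}, Lemma \ref{le:closed equivalent} yields the orthogonal decomposition
$$
L^2_{p,1}(\T)\ =\ \Range(\db_s|_{L^2_{p,0}})\ \oplus\ \Ker(\db_s^*|_{L^2_{p,1}\to L^2_{p,0}})\,.
$$
Consequently $H^{p,1}_{L^2,\db_s}(\T)$ is represented by the harmonic space
$\mathcal H^{p,1}:=\Ker(\db_s|_{L^2_{p,1}\to L^2_{p,2}})\cap \Ker(\db_s^*|_{L^2_{p,1}\to L^2_{p,0}})$,
so it suffices to show $\mathcal H^{p,1}=0$. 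I would then invoke Serre duality (Lemma \ref{le:dual T}) together with its adjoint form $\db_s=-\star\db_{\tilde c}^*\star$ to check that the Hodge star $\star:L^2_{p,1}(\T)\to L^2_{2-p,1}(\T)$ restricts to a bijection of $\mathcal H^{p,1}$ onto the dual harmonic space
$\mathcal H^{2-p,1}_{\tilde c}:=\Ker(\db_{\tilde c}|_{L^2_{2-p,1}\to L^2_{2-p,2}})\cap \Ker(\db_{\tilde c}^*|_{L^2_{2-p,1}\to L^2_{2-p,0}})$.
Since $\db_{\tilde c}$ inherits closed range from $\db_s^*$ through Lemma \ref{le:closed equivalent}, this reduces the problem to showing $H^{2-p,1}_{L^2,\db_{\tilde c}}(\T)=0$.

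For the latter, let $\tilde h\in L^2_{2-p,1}(\T)$ be $\db_{\tilde c}$-closed. By Definition \ref{de:dbar c}, its trivial extension $\tilde h^0$ to $\CC^2$ is a $\db$-closed $(2-p,1)$-form with $L^2$ coefficients. Fix an open ball $B$ with $\overline{\T}\subset B$; by H\"ormander's $L^2$ existence theorem on the pseudoconvex domain $B$, one can solve $\db v=\tilde h^0$ with $v\in L^2_{2-p,0}(B)$. Because $\tilde h^0$ vanishes on $B\setminus\overline{\T}$, each coefficient of $v$ is holomorphic on this open set. Since $\overline{\T}$ is a compact subset of $B$ whose complement $B\setminus\overline{\T}$ is connected, the Hartogs extension theorem in $\CC^2$, applied coefficient by coefficient, produces a holomorphic $(2-p,0)$-form $\tilde v$ on $B$ agreeing with $v$ on $B\setminus\overline{\T}$. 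Then $w:=v-\tilde v\in L^2_{2-p,0}(B)$ satisfies $\db w=\tilde h^0$ on $B$ and vanishes identically on $B\setminus\overline{\T}$, so $w|_{\T}\in \Dom(\db_{\tilde c})$ and $\db_{\tilde c}(w|_{\T})=\tilde h$, as desired.

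I expect the principal obstacle to be the Serre-duality bookkeeping in the reduction step: one must verify that $\star$ intertwines the closed operators $\db_s$ and $\db_s^*$ with their dual counterparts $\db_{\tilde c}^*$ and $\db_{\tilde c}$ on matching $L^2$-domains, so that $\star$ induces a genuine bijection of harmonic spaces. This is essentially encoded in Lemma \ref{le:dual T} and its Hilbert-space adjoint, but requires care with sign conventions and with the rectifiability hypothesis on $b\T$ that underlies the duality. The Hartogs extension step is geometrically clean; it relies only on $n=2\ge 2$ and on the connectedness of $B\setminus\overline{\T}$, which is automatic for any ball $B$ strictly containing $\overline{\T}$.
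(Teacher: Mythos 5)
Your argument is correct, but it takes a more self-contained route than the paper. The paper's proof is two lines: closed range of $\db_s$ (Theorem \ref{th:closed strong}) makes $H^{0,1}_{L^2,\db_s}(\T)$ Hausdorff by a result of Treves, and then the dichotomy of Laurent-Thi\'ebaut--Shaw (either $H^{0,1}_{L^2,\db_s}=0$ or it is non-Hausdorff, valid for pseudoconvex domains with rectifiable boundary) forces it to vanish. You instead unpack what is essentially the proof of that cited dichotomy: closed range gives the harmonic-space representation, Serre duality (Lemma \ref{le:dual T} and its adjoint) transports the harmonic space to the $\db_{\tilde c}$-complex, and the resulting $\db$-Cauchy problem in degree $(2-p,1)$ is solved by H\"ormander's theorem on a ball followed by the Hartogs Kugelsatz --- note that your last step is precisely the $q=1$ case of Lemma \ref{le:compat} (Chen--Shaw, Theorem 9.1.3), where the moment conditions are vacuous because $q=1<n$ and $B\setminus\overline{\T}$ is connected. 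Your version buys transparency: it makes explicit where $n\ge 2$, pseudoconvexity, and the rectifiability of $b\T$ (needed for Lemma \ref{le:dual T}) enter, at the cost of the sign and domain bookkeeping for $\star$, $\db_s^*$, $\db_{\tilde c}^*$ that you correctly flag; the paper's version is shorter because it delegates exactly this work to \cite{LaurentShaw13}. One small point to tighten: when you pass from $\mathcal H^{p,1}\cong\mathcal H^{2-p,1}_{\tilde c}$ to ``it suffices to show $H^{2-p,1}_{L^2,\db_{\tilde c}}=0$,'' you do not actually need closed range of $\db_{\tilde c}$ in low degree --- a form in $\Ker(\db_{\tilde c}^*)$ that is also $\db_{\tilde c}$-exact is orthogonal to itself, so $\mathcal H^{2-p,1}_{\tilde c}$ injects into the cohomology regardless --- which slightly simplifies your reduction.
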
  
\begin{proof} Take $p=0$.
Since $\db_s:L^2(\T)\dashrightarrow L^2_{0,1}(\T)$
has closed range by Theorem~\ref{th:closed strong},
the corresponding cohomology group
$H^{0,1}_{L^2,\db_s}(\T)$ is Hausdorff 
(see \cite[Proposition 4.5]{Treves67}).  
It follows from~\cite[Theorem 3.2 (iv)]{LaurentShaw13}
that $H^{0,1}_{L^2, \db_s}(\T)=0$. 
 \end{proof}

\begin{theorem} 
\label{th:W=S} 
On $\T$, the  strong maximal extension $\db_s$ 
of the Cauchy-Riemann operator
equals the weak maximal extension $\db$. 
\end{theorem}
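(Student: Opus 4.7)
The plan is to establish $\db = \db_s$ separately at each bidegree, exploiting the $p$-independence of $\db$ on $\CC^n$: wedge multiplication with $dz_1\wedge dz_2$ gives an $L^2$ isomorphism $L^2_{0,q}(\T)\to L^2_{2,q}(\T)$ that intertwines $\db$ with $\db$ and $\db_s$ with $\db_s$ (since it preserves smoothness up to $b\T$, smooth approximability, and distributional identities). So it suffices to prove equality on $(0,0)\to(0,1)$ forms and on $(0,1)\to(0,2)$ forms; the top bidegree is trivial.

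For the $(0,1)\to(0,2)$ case, I would simply invoke what is already embedded in the proof of Theorem \ref{th:closed strong}. There, the identity $\db_{\tilde c}=\db_c$ on functions (Proposition \ref{prop:W=S 2}) was combined with the Serre duality Lemmas \ref{le:dual} and \ref{le:dual T} to produce \eqref{eq:2,2}, namely $\db_s=\db:L^2_{2,1}(\T)\dashrightarrow L^2_{2,2}(\T)$. Transferring by the $p$-independence isomorphism above yields $\db_s=\db$ on $(0,1)\to(0,2)$ forms.

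For the $(0,0)\to(0,1)$ case, I would use a Hodge-type decomposition driven by the vanishing of $H^{0,1}_{L^2,\db_s}(\T)$ (Proposition \ref{prop:vanishing harmonic}). Given $f\in\Dom(\db)\subset L^2(\T)$, the form $\db f$ is $\db$-closed, so by the previous paragraph it lies in $\Dom(\db_s)$ with $\db_s(\db f)=\db(\db f)=0$. Cohomology vanishing then provides $g\in\Dom(\db_s:L^2(\T)\dashrightarrow L^2_{0,1}(\T))$ with $\db_s g=\db f$. Since $\db_s g=\db g$ on $\Dom(\db_s)$, we get $\db(f-g)=0$, so $f-g\in\Ker(\db)$. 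By Proposition \ref{prop:Ker}, $\Ker(\db)=\Ker(\db_s)$ on functions, whence $f-g\in\Dom(\db_s)$ and therefore $f=g+(f-g)\in\Dom(\db_s)$ with $\db_s f=\db f$. This establishes the reverse of the trivial inclusion $\Dom(\db_s)\subset\Dom(\db)$.

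The hard work has already been done upstream: the closed range theorem for $\db_s$ (Theorem \ref{th:closed strong}), the explicit Laurent-expansion analysis behind $\Ker(\db_s)=\Ker(\db)$ (Proposition \ref{prop:Ker}), and the $L^2$ Serre duality for rectifiable boundaries used to identify $\db_{\tilde c}$ with $\star\db_s^*\star$. Given these, the remaining argument is a short functional-analytic chase. The only pitfall to watch for is the logical ordering of the two bidegrees: one must handle $(0,1)\to(0,2)$ first, because the cohomology-vanishing argument on functions relies on knowing that a $\db$-closed $(0,1)$-form is automatically in $\Dom(\db_s)$. Once that order is respected, all the pieces assemble cleanly.
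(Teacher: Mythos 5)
Your proposal is correct and follows essentially the same route as the paper: equality at the level of $(p,1)\to(p,2)$ via Eq.~\eqref{eq:2,2}, then equality on functions from $\Ker(\db_s)=\Ker(\db)$ (Proposition~\ref{prop:Ker}) together with the vanishing of $H^{0,1}_{L^2,\db_s}(\T)$ (Proposition~\ref{prop:vanishing harmonic}). The only cosmetic difference is that you unpack the paper's closing appeal to ``a closed extension with the same kernel and range must coincide'' into an explicit element chase, which is a perfectly valid rendering of the same argument.
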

\begin{proof}  {\em q=2:}\ By
Eq.~\eqref{eq:2,2}, we have that
$\db_s=\db$ on $(p,1)$-forms for $p=0,1,2$.

\smallskip
{\em q=1:}\ Take $p=0$ and consider 
$\db_s : L^2(\T)\dashrightarrow L^2_{0,1}(\T)$. 
By Proposition~\ref{prop:Ker}
$$
\Ker(\db_s)=\Ker(\db)= \mathcal{H}(\T)
$$ 
on functions. Since $\db=\db_s:L^2_{0,1}\dashrightarrow L^2_{0,2}$
by the first part of the proof, 
we have that $\Ker(\db_s)=\Ker(\db)\subset L^2_{0,1}(\T)$.
By Proposition~\ref{prop:vanishing harmonic}
and H\"ormander's $L^2$ results, 
$H^{0,1}_{L^2,\db_s}= H^{0,1}_{L^2,\db}=0$,
which means by the definition of the cohomology groups that
$$ 
\Range(\db_s)=\Range(\db)\subset L^2_{0,1}(\T).
$$ 
Since $\db$ is a closed extension of the densely defined
operator $\db_s$,
with the same kernel and range, $\db=\db_s$ on functions.
\end{proof}

\begin{corollary}[\textbf{Bergman projection}]\label{co:Bergman} 
Let $B_s: L^2(\T)\to \mathcal H(\T)$ 
be  the Bergman projection with respect to $\db_s$  on $\T$. Then $B=B_s$. 
Moreover, for any $f\in L^2(\T)$, the complementary projection
satisfies
$$ 
f-Bf\ = \ \db_s^* \db_s N_0  f \ = \ \db_s^* N_1  \db_s f\,,
$$ 
where $N_0$ is the $\db$-Neumann operator on functions,
and $N_1$  is the $\db$-Neumann operator on  $(0,1)$-forms.
 
\end{corollary}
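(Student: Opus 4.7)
My plan is to derive the corollary from Theorem~\ref{th:W=S} together with the standard $L^2$-Hodge theory of the $\db$-Neumann problem on the bounded pseudoconvex domain $\T$.

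First I would invoke Theorem~\ref{th:W=S} to observe that $\db=\db_s$ on functions and on $(0,1)$-forms. Consequently, the Hilbert space adjoints agree, $\db^*=\db_s^*$, the kernels coincide, $\Ker(\db)=\Ker(\db_s)=\mathcal H(\T)$, and the two Laplacians agree: $\Box_0=\db^*\db=\db_s^*\db_s$ and $\Box_1=\db\db^*+\db^*\db=\db_s\db_s^*+\db_s^*\db_s$. The identity $B=B_s$ is now immediate, since both operators are the orthogonal projection of $L^2(\T)$ onto the common closed subspace $\mathcal H(\T)$.

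Next I would invoke H\"ormander's $L^2$-theory on the bounded pseudoconvex domain $\T$: it supplies bounded Neumann operators $N_0:L^2(\T)\to L^2(\T)$ and $N_1:L^2_{0,1}(\T)\to L^2_{0,1}(\T)$ satisfying $\Box_0 N_0=I-B$ (with harmonic space $\mathcal H(\T)$) and $\Box_1 N_1=I$ (since the harmonic $(0,1)$-space vanishes on a pseudoconvex domain). The first formula then follows by specialising $\Box_0 N_0=I-B$ to $f\in L^2(\T)$:
$$f-Bf=\Box_0 N_0 f=\db_s^*\db_s N_0 f.$$

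For the second formula I would apply the classical commutation relation $N_1\db=\db N_0$ on $\Dom(\db)$, a standard fact on bounded pseudoconvex domains: for $f\in\Dom(\db_s)$,
$$\db_s^* N_1\db_s f=\db_s^*\db_s N_0 f=f-Bf.$$
An alternative, more self-contained route (in case one prefers not to quote the commutation identity) is to fix $f\in\Dom(\db_s)$, set $g:=\db_s f$ and $u:=\db_s^*N_1 g$, observe that $g\in\Ker(\db_s)$ by $\db_s^2=0$, deduce $\db_sN_1 g=0$ and hence $\db_s\db_s^*N_1 g=g$ from $\Box_1 N_1 g=g$, and then conclude that $f-u\in\Ker(\db_s)=\mathcal H(\T)$ while $u\in\Range(\db_s^*)\subset\mathcal H(\T)^\perp$, so that $Bf=f-u$ by uniqueness of orthogonal decomposition. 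The main obstacle I anticipate is the bookkeeping of unbounded operator domains---verifying that $N_0 f$ and $N_1\db_s f$ lie in the appropriate domains of $\db_s^*\db_s$ and $\db_s^*$, and that $N_1\db_s=\db_s N_0$ holds in the operator-theoretic (not merely formal) sense. Once Theorem~\ref{th:W=S} is in hand, no input specific to the geometry of $\T$ is required beyond H\"ormander's $L^2$ theory.
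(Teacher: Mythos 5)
Your proposal is correct and follows essentially the same route as the paper: reduce everything to the weak operator $\db$ via Theorem~\ref{th:W=S} (and the kernel identity of Proposition~\ref{prop:Ker}), then quote H\"ormander's $L^2$ theory and Kohn's formula for the Bergman projection. The paper simply cites Theorem 4.4.3 and Corollary 4.4.4 of \cite{ChenShaw01} for the identities $f-Bf=\db^*\db N_0 f=\db^* N_1\db f$, where you spell out the commutation relation and domain bookkeeping explicitly.
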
 
\begin{proof}  
Since $\Ker(\db_s)= \mathcal{H}(\T)=\Ker(\db)$,
either $\db_s$ or $\db$ can be used to define the Bergman projection.
The formulas for the orthogonal projection hold for $\db$ 
by H\"ormander's theory and by Kohn's formula for the Bergman 
projection  (see  Theorem 4.4.3 and Corollary 4.4.4 in 
\cite{ChenShaw01}). Since $\db=\db_s$,   the corollary follows.  
 \end{proof}

\section{\bf Dolbeault cohomology on the complement of $\T$}

In this section, we study the Dolbeault cohomology groups
on an annulus between a pseudoconvex domain and the 
Hartogs triangle $\T$.  

 \begin{definition}
 Let $\Omega$ be a bounded domain in $\CC^n$. 
  Let $W^k(\Omega)$ be the Sobolev  space of order 
$k\in \NN\cup \{0\}$. 
We  denote by $H^{p,q}_{W^k}(\Omega)$ the associated 
cohomology group defined by  
$$ H^{p,q}_{W^k}(\Omega)= 
  \frac {\{f\in W^k_{p,q}(\Omega)\mid \db  f=0\} } {\{f\in W^k_{p,q}(\Omega)\mid f=\db  u \text{ for some }u\in W^k_{p.q-1}(\Omega)\}}.$$
When $k=0$, we also use the notation $ H^{p,q}_{L^2}(\Omega)$ 
to denote the $L^2$ Dolbeault cohomology groups with respect to $\db$. 
\end{definition}
 
We will need the following result
from the book of Chen-Shaw (\cite[Theorem 9.1.3]{ChenShaw01}).  

\begin{lemma} \label{le:compat} 
Let $\Omega$ be a bounded pseudoconvex domain
in $\CC^n$, $n\ge 2$. For any $f\in L^2_{p,q}(\CC^n)$, 
$0\le p\le n$, $1\le q\le n$, such that $f$ is supported
in $\overline\Omega$ and
$$ 
\int_{\Omega}  f\wedge \phi     =0,  
\quad\phi\in L^2_{2-p,0}  (\Omega) \cap\Ker(\db)\,
$$ 
there exists $u\in L^2_{p,q-1}(\CC^n)$
such that $\db_{\tilde c} u=f$.

\end{lemma}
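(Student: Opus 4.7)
The plan is to reformulate $\db_{\tilde c} u = f$ as an adjoint equation on $\Omega$ via $L^2$ Serre duality and then solve it by H\"ormander's $L^2$-theorem on the pseudoconvex domain $\Omega$. The Hodge-star conjugation underlying the duality relations of Lemmas~\ref{le:dual} and~\ref{le:dual T} identifies the existence of $u\in L^2_{p,q-1}(\CC^n)$ supported in $\overline\Omega$ with $\db_{\tilde c} u = f$ as equivalent to the existence of $v\in L^2_{n-p,n-q+1}(\Omega)$ solving the adjoint equation $\db^* v = g$, where $g=\pm\star f\in L^2_{n-p,n-q}(\Omega)$ and $u=\pm\star^{-1} v$. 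I would first verify this correspondence carefully: the distributional identity $\db u^0 = f$ on $\CC^n$ (for $u^0$ the zero-extension of $u$) is to be matched with the Hilbert-space defining identity $\langle v,\db\eta\rangle = \langle g,\eta\rangle$ for all $\eta\in\Dom(\db)$ on $\Omega$.

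Next, H\"ormander's $L^2$-theorem on $\Omega$ yields closed range for $\db:L^2_{n-p,n-q}(\Omega)\to L^2_{n-p,n-q+1}(\Omega)$ together with vanishing of the $L^2$ Dolbeault cohomology in positive bidegree. By Lemma~\ref{le:closed equivalent}, $\db^*$ is also of closed range, with $\Range(\db^*) = \Ker(\db)^\perp$ inside $L^2_{n-p,n-q}(\Omega)$. The problem thus reduces to verifying that $g\in \Ker(\db)^\perp$.

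The verification splits by bidegree. In the top-degree case $q=n$, the space $\Ker(\db)\cap L^2_{n-p,0}(\Omega)$ is the Bergman space of holomorphic $(n-p,0)$-forms, and the wedge pairing $\int_\Omega f\wedge\phi$ coincides up to sign with the $L^2$-pairing $\langle g,\phi\rangle$, so the hypothesis of the lemma translates exactly to $g\perp\Ker(\db)$. For $1\le q<n$, H\"ormander's cohomology vanishing identifies $\Ker(\db)$ with $\Range(\db)$ at bidegree $(n-p,n-q)$, and orthogonality to this range amounts to the pointwise identity $\db^* g = \pm\star\db\star g = \pm\star\db f = 0$, which follows from the (implicit and necessary) hypothesis $\db f = 0$ as a current on $\CC^n$. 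Once $g\in\Range(\db^*)$ is in hand, we solve $\db^* v = g$ and set $u := \pm\star^{-1} v$; extended by zero, this $u$ lies in $\Dom(\db_{\tilde c})$ with $\db_{\tilde c} u = f$.

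The main obstacle is the careful matching between the distributional equation on $\CC^n$ and the Hilbert-space adjoint equation on $\Omega$, for which the compatibility hypothesis plays the role of the ``price of admission.'' Unlike the cleaner pairing of $\db_c$ with $\db$ in Lemma~\ref{le:dual}, the pairing of $\db_{\tilde c}$ with $\db_s$ in Lemma~\ref{le:dual T} requires testing against forms smooth up to the boundary, and one must confirm that the solution $v$ furnished by H\"ormander carries no hidden boundary obligation beyond what is admissible for the domain of $\db_{\tilde c}$. Tracking signs and bidegrees through the Hodge-star chain is a further tedious but routine check.
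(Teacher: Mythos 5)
The paper does not prove this lemma at all: it is quoted verbatim from \cite[Theorem 9.1.3]{ChenShaw01}, so there is no in-paper argument to match yours against. Your duality scheme is in fact the standard proof of the top-degree case $q=n$, which is the only case the paper uses (in Theorem \ref{th:W1 L2} the lemma is applied to $\db(\eta f)\in L^2_{2,2}$) and the only case in which the stated integral condition is even meaningful (for $q<n$ the form $f\wedge\phi$ has bidegree $(n,q)$ and the integral is identically zero). For $q=n$ your argument is correct and can be streamlined: you should invoke Lemma \ref{le:dual} ($\db_c=-\star\db^*\star$, valid on any bounded domain), not Lemma \ref{le:dual T}. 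Solving $\db^*v=g$ with $v\in\Dom(\db^*)$ automatically produces $u=\pm\star v\in\Dom(\db_c)\subset\Dom(\db_{\tilde c})$, so the ``hidden boundary obligation'' you worry about never arises; the moment condition is exactly $g\perp\Ker(\db)\cap L^2_{n-p,0}=\mathcal H$, and H\"ormander plus Lemma \ref{le:closed equivalent} gives $g\in\Range(\db^*)$.

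The genuine gap is in your treatment of $1\le q\le n-1$. There you must assume $\db f=0$ as a current on $\CC^n$ (you note this), and you claim that $g\perp\Ker(\db)$ then follows from ``the pointwise identity $\db^*g=0$.'' This conflates the formal adjoint with the Hilbert-space adjoint. Since $H^{n-p,n-q}_{L^2,\db}(\Omega)=0$, one has $\Ker(\db)=\Range(\db)$ in bidegree $(n-p,n-q)$, so $g\perp\Ker(\db)$ means $g\in\Range(\db)^{\perp}=\Ker(\db^*)$, and membership in $\Dom(\db^*)$ carries a boundary condition: by Lemma \ref{le:dual} it is equivalent to $\pm\star g=f$ lying in $\Ker(\db_c)$. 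The hypothesis $\db f^0=0$ on $\CC^n$ only places $f$ in $\Ker(\db_{\tilde c})$. On a general bounded pseudoconvex domain $\db_c$ and $\db_{\tilde c}$ are not known to coincide --- indeed this paper expends real effort (Proposition \ref{prop:W=S 2}) to prove $\db_c=\db_{\tilde c}$ on $\T$ and only for functions --- so the orthogonality does not follow as stated. The standard repair for $q<n$ abandons duality: solve $\db v=f$ on a large ball $B\supset\supset\overline\Omega$ by H\"ormander, note that $v$ is $\db$-closed on the connected set $B\setminus\overline\Omega$, and correct $v$ by a Hartogs-type extension (for $q=1$) or by descending induction on $q$, which is how the cited reference proceeds.
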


Consider an annular domain
\begin{equation}\label{eq:annulus}\Omega 
=\Omega_1\setminus \overline {\T}
\end{equation}
where  $\Omega_1$ is a pseudoconvex domain in $\CC^2$
containing $\overline\T$.

\begin{theorem}\label{th:W1 L2}
Let $\Omega$ be given by Eq.~\eqref{eq:annulus},  
where $\Omega_1\subset\CC^2$ is a bounded  pseudoconvex domain 
with $C^2$-boundary, and $\overline{\T}\subset \Omega_1$. 
Then 
$$
H^{p,1}_{W^1} (\Omega) \cong \mathcal H(\T)\,,\qquad 0\le p\le 2\,,
$$
where $\mathcal{H}(\T)$ is the Bergman space of $\T$.
In particular, $H^{p,1}_{W^1} (\Omega) $ 
is Hausdorff and infinite-dimensional.
\end{theorem}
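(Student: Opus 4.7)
The plan is to construct an isomorphism
$$\Phi \colon H^{p,1}_{W^1}(\Omega) \longrightarrow \mathcal{H}(\T)$$
by combining Sobolev extension (Lemma~\ref{le:extension annuli}) with the Serre-duality compatibility lemma (Lemma~\ref{le:compat}). For a $\db$-closed form $f \in W^1_{p,1}(\Omega)$, I would extend it to $\tilde f \in W^1_{p,1}(\Omega_1)$ and form $g := \db \tilde f \in L^2_{p,2}(\Omega_1)$; since $\db f = 0$ on $\Omega$, the form $g$ is supported in $\overline\T$. The class $\Phi([f])$ is then the continuous linear functional
$$
\phi \mapsto \int_\T g \wedge \phi
$$
on the space $L^2_{2-p,0}(\T) \cap \Ker(\db)$ appearing in Lemma~\ref{le:compat}, identified via the $L^2$ inner product with an element of $\mathcal{H}(\T)$ (for $p = 0, 1, 2$ this space is an infinite-dimensional complex Hilbert space abstractly isomorphic to $\mathcal{H}(\T)$).

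Well-definedness uses the trace theory developed in Section~2. Two extensions of $f$ differ by a $W^1$-form on $\Omega_1$ vanishing on $\Omega$; by Corollary~\ref{co:trace} applied from both sides of the Ahlfors--David regular boundary $b\T$ (Lemma~\ref{le:ADR}), the difference has zero trace on $b\T$, and integration by parts using $\db\phi = 0$ leaves the pairing unchanged. If $f = \db v$ with $v \in W^1_{p,0}(\Omega)$, extending $v$ to $\tilde v \in W^1(\Omega_1)$ and taking $\tilde f := \db\tilde v$ gives $g = 0$, so $\Phi$ descends to cohomology.

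For injectivity, Lemma~\ref{le:compat} translates vanishing of $\Phi([f])$ directly into the existence of $h \in L^2_{p,1}(\CC^2)$ supported in $\overline\T$ with $\db_{\tilde c} h = g$. Then $\tilde f - h$ is $\db$-closed on the pseudoconvex $\Omega_1$, and H\"ormander's $L^2$-theory yields $U \in L^2_{p,0}(\Omega_1)$ with $\db U = \tilde f - h$; since $h$ vanishes on $\Omega$, the function $u := U|_\Omega$ satisfies $\db u = f$. For surjectivity, run the construction in reverse: given $\phi$, pick a $(p,2)$-form $g_\phi$ on $\T$ whose $L^2$ pairing represents $\phi$ (for $p = 0$, take $g_\phi = \bar\phi \, d\bar z \wedge d\bar w$ up to a constant), extend by zero to $\Omega_1$, solve $\db \tilde f = g_\phi$ on $\Omega_1$ by H\"ormander, and set $f := \tilde f|_\Omega$.

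The chief technical obstacle, in both directions, is upgrading the $L^2$ H\"ormander solution to a $W^1$ one. On the $\Omega$-side, interior ellipticity of the overdetermined $\db$-system gives regularity near the portion of $b\Omega$ at $b\T$, which is interior to $\Omega_1$. Near the outer boundary $b\Omega_1$, however, one needs $W^1$-regularity of the canonical $\db$-solution on a merely $C^2$ pseudoconvex domain, which is not automatic; I expect to handle this either by invoking Kohn-type Sobolev estimates adapted to the $C^2$ setting, or by first replacing $\Omega_1$ with an internally approximating $C^\infty$ strongly pseudoconvex domain still containing $\overline\T$ (where Kohn's theory applies directly) and then verifying that the annular cohomology is independent of this choice of pseudoconvex neighborhood.
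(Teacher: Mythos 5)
Your construction coincides with the paper's: both pair $\db$ of a Sobolev extension of $f$ against the Bergman space, both use Lemma~\ref{le:compat} together with solvability of $\db$ on $\Omega_1$ for injectivity, and both reverse the construction for surjectivity. There are, however, two concrete gaps. First, every integration-by-parts step you perform on $\T$ is taken against a holomorphic $\phi$ that is merely $L^2$; since $\T$ is not Lipschitz and an $L^2$ holomorphic function has no a priori trace on $b\T$, neither Stokes' theorem nor your claim that the pairing depends only on the boundary values of $f$ is justified at this level of regularity. The paper's proof runs precisely through Proposition~\ref{prop:Ker}: because $\mathcal{H}(\T)=\Ker(\db_s)$, each $h\in\mathcal H(\T)$ admits approximants $h_\nu\in C^\infty(\overline\T)$ with $\db h_\nu\to 0$ in $L^2$, and all boundary integrals are evaluated against $h_\nu$ before passing to the limit. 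Without this ingredient --- one of the main results of Section~3 --- your well-definedness and exactness arguments do not close. Relatedly, to show that $\Phi$ kills exact classes you evaluate the pairing on the extension $\db\tilde v$, which is only an $L^2$ extension of $\db v$, whereas your extension-independence argument covers only $W^1$ extensions; the paper instead approximates the extended potential by smooth $u_j$ and applies Stokes' theorem twice.

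Second, concerning your ``chief technical obstacle'': the paper settles $W^1$ solvability of $\db u=F$ on the $C^2$ pseudoconvex domain $\Omega_1$ by citing Kohn and Harrington, so your first suggestion is the correct one. Your fallback --- replacing $\Omega_1$ by an interior smooth strongly pseudoconvex approximant $\Omega_1'$ --- does not obviously work for injectivity: a primitive of $\tilde f - h$ on $\Omega_1'\setminus\overline\T$ with $\Omega_1'\subsetneq\Omega_1$ does not exhibit $f$ as $\db$-exact on all of $\Omega$, and proving that the annular cohomology is independent of the outer domain is essentially equivalent to the solvability statement you are trying to avoid.
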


\begin{proof} We will prove that
$ H^{p,1}_{W^1} (\Omega) \cong (\mathcal H(\T))'$,
the space of bounded linear forms on the Bergman space.
It suffices to consider $p=2$.

By Lemma~\ref{le:extension annuli},
there is a bounded linear extension operator
$\eta: W^1(\Omega)\to W^1(\Omega_1)$.
For $f\in W^1_{2,1}(\Omega)$, define 
$\ell_f\in (\mathcal H(\T))'$ by 
\begin{equation}
\label{eq:pairing} 
\ell_f(h):= \int_{\T}\db (\eta f)\wedge h, \quad  
h\in \mathcal H(\T)\,.
\end{equation} 
Clearly, $\ell_h(f)$ is bilinear and jointly
continuous in $f\in W^1_{2,1}(\T)$ and 
$h\in \mathcal{H}$.

\smallskip{\em Smooth approximation.}\ 
Let $h\in \mathcal{H(\T)}$. Since $\mathcal{H}(\T)=\Ker(\db_s)$ 
by Proposition \ref{prop:Ker}, there is a sequence
$\{h_\nu\}$ in $C^\infty (\overline{\T})$ such that  
\begin{equation}\label{eq:h graph}\begin{cases} 
&h_\nu\to h,\qquad \text{in }L^2 (\T),
\\&\db h_\nu\to 0\qquad \text{in }L^2_{0,1}(\T).\end{cases}\end{equation}
Since $\db (\eta f)\wedge h = \db(\eta f \wedge h)$,
Stokes' theorem implies that
\begin{equation} \label{eq:Stokes}
\ell_f(h)
=\lim_{\nu\to\infty} \int_{\T} \db(\eta f \wedge h_\nu)
=\lim_{\nu\to\infty} \int_ {b\T} \tau f\wedge h_\nu\,,
\end{equation}
where $\tau f$ is the trace of $f$.
We have used that $\tau f\in L^2(b\T)$ by Corollary~\ref{co:trace}.
It is apparent from Eq.~\eqref{eq:Stokes}
that $\ell_f$ does not depend on the choice of
the extension~$\eta$.

\smallskip {\em $\ell_f$ is determined by
the cohomology class $[f]$.}\ 
Suppose that $f=\db u$ for some $u\in W^1_{2,0}(\Omega)$.
Let $\eta u$ be the extension of $u$ to $W^1(\Omega_1)$.
Since $\Omega_1$ is a Lipschitz domain,
there exists a sequence $u_j\in C^\infty(\CC^2)$ with
$u_j\to \eta u$ in $W^1(\Omega_1)$.
By Eq.~\eqref{eq:Stokes} and two more applications
of Stokes' theorem,
$$
\ell_{\db u_j}(h_\nu)
=\int_{b\T}(\db u_j)\wedge h_\nu
=\int_{b\T}   u_j\wedge\db  h_\nu
= \int_{\T} \db u_j\wedge \db h_\nu\,.
$$
Taking first $j\to\infty$ and then $\nu\to\infty$, we arrive at
$\ell_{\db u}(h)=0$.
Thus the map $[f]\mapsto \ell_f$ is well-defined
from $H^{0,1}_{W^1}(\Omega)$ to $(\mathcal H(\T))'$. 

\smallskip {\em $[f]\mapsto \ell_f$ is injective.}\ 
Suppose that $\ell_f$ vanishes on $\mathcal{H}(\T)$. 
By Lemma~\ref{le:compat}, there exists $g\in L^2_{2,1}(\T)$
such that $\db_{\tilde c}g=\db(\eta f)$ on $\T$.
In fact, the trivial extension $g^0$ of $g$ lies in 
$W^{1}_{2,1}(\CC^2)$. Set $F=\eta f- g^0\in W^1(\Omega_1)$.

By construction, $\db F=0$ on $\Omega$.
Since $\Omega_1$ has $C^2$ boundary, we can solve 
$\db u=F$ for some function $u\in W^1_{2,0}(\Omega_1)$ 
(see \cite{Kohn73} and \cite{Harrington09}).
In particular, $\db u=f$ on $\Omega$.   

\smallskip {\em $[f]\mapsto \ell_f$ is surjective.}\ 
Let $\ell\in (\mathcal H(\T))'$. 
Since    $\mathcal  H(\T)$ is a Hilbert space, $\ell$  
can be represented by some  holomorphic function  
$g\in \mathcal H(\T)$. Let $g^0\in L^2(\Omega_1)$ be the trivial
extension of $g$, and let $\star g^0$ be the
dual $(2,2)$-form on $\Omega_1$. Since  a top degree form  is always 
$\db$-exact, there exists $v\in W^1_{2,1}(\Omega_1)$
that solves $\db v=\star g^0$ on $\Omega_1$.
By construction,
$$
\ell(h)=(g,h)=\int_{b\T} \db v \wedge h\,,\qquad h\in \mathcal{H}(\T)\,.
$$
Let $f$ be the restriction of $v$ to $\Omega$.
Then $f\in W^1_{2,1}(\Omega)$, and $v$ is an extension of $f$ 
to~$\Omega_1$. Since the extension does not matter,
$\ell=\ell_f$.

\smallskip We conclude that $[f]\mapsto \ell_f$ is 
a linear isomorphism from $H^{2,1}_{W^1}(\Omega)$ to 
$\mathcal H(\T)'$.  Since $\mathcal{H}$ is a Hilbert space,
the theorem is proved.
\end{proof}

 \section{\bf Some open questions}   
    
 Let  $\Omega_1$ and $\Omega_2$ be two bounded pseudoconvex domains in $\CC^n$ and let  $\overline {\Omega}_2\subset \Omega_1$. 
  Let $\Omega$ be the annulus    between the two pseudoconvex domains with
  $$\Omega=\Omega_1\setminus \overline {\Omega}_2.$$ 
It is known for $\Omega= \Omega_1\setminus \overline{\T}$  
that the classical Dolbeault cohomology with smooth coefficients on $\Omega$,
   $$H^{0,1}(\Omega):=
  \frac {\{f\in C^\infty_{0,1}(\Omega)\mid \db f=0\} } {\{f\in C^\infty_{0,1}(\Omega)\mid f=\db  u \text{ for some }u\in C^\infty(\Omega)\}}$$  is non-Hausdorff (see \cite[Corollary 4.6]{LaurentShaw13}). 
This is in sharp contrast to Theorem \ref{th:W1 L2}.

  Theorem \ref{th:W1 L2} is a generalization of a result 
by H\"ormander for the case when $\Omega$ is the annulus  between
 two concentric balls in $\CC^n$ (see \cite{Hormander04}.
In that case, $H^{0,n-1}_{L^2}(\Omega)$ is Hausdorff,
and  one can realize  the space   $H_{L^2}^{0,n-1}(\Omega)$
explicitly as the Bergman space of the inner domain.

  When $\Omega_2$ is a pseudoconvex domain with $C^3$ boundary and $0<q<n-1$, the $L^2$ and Sobolev cohomology groups for $\db$ on the annulus were
studied much earlier  in \cite{Shaw85}. 
For general pseudoconvex domains with $C^2$ boundary, the 
Hausdorff property for the critical degree $q=n-1$  is proved in 
 \cite{Shaw10}. The necessary conditions for the Hausdorff properties for the Dolbeault
cohomology group for  $\db$ on  annuli is proved in~\cite{FLS17}.

In view of  Theorem \ref{th:W1 L2} and the remarks 
above, it is natural to ask the following question.   

\begin{problem}\label{prob:1} Let $\Omega=\Omega_1\setminus \overline{\T}$. 
Determine if $H^{0,1}_{L^2}(\Omega)$ is Hausdorff. 
\end{problem}

Without loss of generality, we can take the 
outer domain $\Omega_1$  in  Problem \ref{prob:1}   to be the ball of radius $r\ge 2$ centered at 0 
(see \cite{CLS} for a discussion on this).   
  Problem \ref{prob:1}  can be   called the {\it Dollar Bill} problem since the shape is   featured on the reverse of the American one-dollar bill. 

When the inner domain is the bidisk $D^2$,   
the corresponding problem for  $\Omega=B\setminus D^2$,  
is called  the {\it Chinese Coin} problem since it has the shape of an ancient 
Chinese coin. The Chinese coin  problem is solved in \cite{CLS}.  
Problem~1 has an equivalent formulation
in terms of the $W^1$ Dolbeault cohomology of $\T$:

\begin{proposition}\label{prop:W1 L2}
Let $\Omega= \Omega_1\setminus \overline{\T}$, 
where  $\Omega_1$ is a bounded pseudoconvex domain 
in $\CC^2$  with  $\overline{\T}\subset \Omega_1$. 
Then the following are equivalent:
\begin{enumerate}
\item  $H^{0,1}_{L^2} (\Omega) $ is  Hausdorff;
\item $H^{0,1}_{W^1}(\T)=0$.
\end{enumerate}
\end{proposition}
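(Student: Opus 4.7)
The approach relies on $L^2$ Serre duality (Lemma~\ref{le:dual}), the extension theorems from Section~2, and H\"ormander's $L^2$ theory on the pseudoconvex domains $\Omega_1$ and $\T$. First I would reformulate~(1) in terms of a $\db$-Cauchy problem on $\Omega$: by Lemmas~\ref{le:closed equivalent} and~\ref{le:dual}, $H^{0,1}_{L^2}(\Omega)$ is Hausdorff if and only if the operator $\db_c:L^2_{2,1}(\Omega)\dashrightarrow L^2_{2,2}(\Omega)$ has closed range, equivalently, for every $\db$-closed $f\in L^2_{2,2}(\CC^2)$ supported in $\overline\Omega$ and orthogonal to the corresponding cokernel, one can find $u\in L^2_{2,1}(\CC^2)$ supported in $\overline\Omega$ with $\db u=f$.

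For direction \emph{(2) $\Rightarrow$ (1)}, given such $f$, I would solve $\db U = f$ on $\Omega_1$ by H\"ormander's theorem, obtaining $U\in L^2_{2,1}(\Omega_1)$. The restriction $U|_\T$ is $\db$-closed in $L^2_{2,1}(\T)$, and H\"ormander on the pseudoconvex $\T$ (also guaranteed by Theorem~\ref{th:W=S} and Proposition~\ref{prop:vanishing harmonic}) yields $v\in L^2_{2,0}(\T)$ with $\db v=U|_\T$. The hypothesis $H^{0,1}_{W^1}(\T)=0$, identified via the Hodge-star isomorphism in $\CC^2$ with $H^{2,1}_{W^1}(\T)=0$, is then used to upgrade $v$ to $W^1_{2,0}(\T)$; this requires first smoothly approximating $f$ by compactly supported forms (so that the corresponding $U$ is $W^1$ near $\T$ by interior regularity), applying the hypothesis to obtain $W^1$-solutions, and invoking the open mapping theorem to secure uniform $W^1$-bounds needed for passage to the $L^2$-limit. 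Extending $v$ to $V\in W^1_{2,0}(\Omega_1)$ via Lemma~\ref{le:extension annuli}, the corrected form $U-\db V\in L^2_{2,1}(\Omega_1)$ vanishes on $\T$, hence is supported in $\overline\Omega$, and $\db(U-\db V)=f$ provides the desired Cauchy-problem solution.

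For direction \emph{(1) $\Rightarrow$ (2)}, the construction is reversed. Given $f\in W^1_{0,1}(\T)$ with $\db f=0$, extend to $\tilde f\in W^1_{0,1}(\Omega_1)$ of compact support in $\Omega_1$ via Theorem~\ref{th:extension}. Then $g:=\db\tilde f\in L^2_{0,2}(\Omega_1)$ is $\db$-closed and supported in $\overline\Omega$, with the orthogonality required by Lemma~\ref{le:compat} following from the compact support of $\tilde f$ via Stokes' theorem. Assumption~(1), recast as closed range for the analogous Cauchy problem on $\Omega$ for $(0,2)$-data, produces $h\in L^2_{0,1}(\Omega_1)$ supported in $\overline\Omega$ with $\db h=g$. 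The form $\tilde f-h$ is then $\db$-closed on $\Omega_1$ and agrees with $f$ on $\T$ (since $h|_\T=0$), and a final application of H\"ormander on $\Omega_1$ produces $U\in L^2(\Omega_1)$ with $\db U=\tilde f-h$. Since $\tilde f-h$ is in $W^1$ in a neighborhood of $\overline\T$ (because $h$ vanishes there and $\tilde f\in W^1$), interior $\db$-regularity combined with the trace theorem (Corollary~\ref{co:trace}) yields $U|_\T\in W^1(\T)$ solving $\db(U|_\T)=f$.

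The \textbf{main obstacle} is the regularity transfer at the non-Lipschitz singular point of $b\T$ at the origin. In direction (2)~$\Rightarrow$~(1), the hypothesis $H^{0,1}_{W^1}(\T)=0$ directly furnishes $W^1$-solvability only for $W^1$-data, while the Cauchy-problem construction produces $L^2$-data on $\T$; bridging this via smooth approximation and the open mapping theorem is delicate because the standard elliptic boundary regularity needed near a non-Lipschitz corner is unavailable. In direction (1)~$\Rightarrow$~(2), extracting $W^1$-regularity of the restriction $U|_\T$ up to the singular point similarly requires the Sobolev embedding and trace machinery of Section~2, which hinges on the uniform-domain and Ahlfors-David regularity results established there.
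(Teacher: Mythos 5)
Your overall strategy---dualize the Hausdorff condition to a $\db$-Cauchy problem on $\Omega$, solve on $\Omega_1$, correct on the hole $\T$, and shuttle $W^1$ regularity back and forth via the extension theorems---is the strategy the paper intends: its ``proof'' is essentially a citation of \cite[Corollary 4.8]{LaurentShaw13}, naming the $\db_s$--$\db_{\tilde c}$ duality and Lemma~\ref{le:extension annuli} as the key inputs. However, both of your regularity-transfer steps, which you yourself flag as the main obstacle, contain genuine gaps, and they are the heart of the matter.

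In (2)~$\Rightarrow$~(1) you must feed $U|_\T$ to the hypothesis $H^{0,1}_{W^1}(\T)=0$, but your construction delivers it only in $L^2$. Your repair (approximate $f$ by forms supported away from $\overline\T$, invoke interior regularity, recover the limit via the open mapping theorem) does not close: the open mapping theorem bounds $\|v_j\|_{W^1(\T)}$ by $\|U_j|_\T\|_{W^1(\T)}$, and the latter is not controlled by $\|f_j\|_{L^2}$ as the supports of the $f_j$ approach $b\T$, so the uniform bounds needed to pass to the limit are unavailable. The missing idea is to take the canonical, $\vartheta$-closed, compactly supported solution of the Cauchy problem on $\Omega_1$ furnished by Lemma~\ref{le:compat}: being compactly supported with both $\db$ and $\vartheta$ in $L^2$, it lies in $W^1(\CC^2)$ by the Morrey--Kohn--H\"ormander identity---this is exactly the step ``the trivial extension $g^0$ lies in $W^1_{2,1}(\CC^2)$'' in the proof of Theorem~\ref{th:W1 L2}---and its restriction to $\T$ is then genuine $W^1$ data, with no approximation argument needed. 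Symmetrically, in (1)~$\Rightarrow$~(2) the claim that ``interior $\db$-regularity combined with the trace theorem'' yields $U|_\T\in W^1(\T)$ fails: interior regularity gives only $W^1_{\mathrm{loc}}$, and an $L^2$ solution of $\db u=f$ on $\T$ with $W^1$ data need not be $W^1$ up to $b\T$ (the function $1/w$ is holomorphic and in $L^2(\T)$ but not in $W^1(\T)$); the trace theorem contributes nothing here. The correct route is again through the outer domain: arrange $\tilde f-h\in W^1_{2,1}(\Omega_1)$ (which requires the $W^1$ regularity of the Cauchy-problem solution $h$ just described), reduce to an outer domain with $C^2$ boundary, and invoke the $W^1$ estimates for $\db$ on $\Omega_1$ from \cite{Kohn73} and \cite{Harrington09}, exactly as in the proof of Theorem~\ref{th:W1 L2}, so that $U\in W^1(\Omega_1)$ and $U|_\T\in W^1(\T)$ by restriction. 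Finally, note that solving with prescribed support in $\overline\Omega$ produces an element of $\Dom(\db_{\tilde c})$, whereas Lemma~\ref{le:dual} dualizes the Hausdorff property of $H^{0,1}_{L^2}(\Omega)$ to the closed range of $\db_c$; reconciling $\db_c$ with $\db_{\tilde c}$ on the non-Lipschitz annulus is precisely where the $\db_s$--$\db_{\tilde c}$ duality emphasized by the paper enters, and your sketch passes over it silently.
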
 
 
The proof of Proposition \ref{prop:W1 L2} is the same 
as for Lipschitz domains, as given 
in~\cite[Corollary 4.8]{LaurentShaw13}. The key points
are the $L^2$-duality between $\db_s$ and $\db_{\tilde c}$ 
and the extension property
(Lemma \ref{le:extension annuli}).

\smallskip This leads to a more general question.

\begin{problem} Determine if $ H^{0,1}_{W^s}(\T)=0$, where $s>0$. 
\end{problem}

We remark that if $\Omega$ is a bounded pseudoconvex domain with smooth boundary in $\mathbf C^n$, it follows from \cite{Kohn73} that $ H^{0,1}_{W^s}(\Omega)=0$ for all $s>0$.
Not much is known about
Sobolev estimates for solutions of $\db$ for the
Hartogs triangle.   But there has been  a lot of work for 
$\db$  in other function spaces. 

It is proved in \cite{Sibony} that there is a form 
$f\in C^\infty_{(0,1)}(\overline {\T})$ with 
$\db f=0$ such that  the equation $\db u=f$
has no solution $u\in C^\infty(\overline {\T})$.
Furthermore,  it is proved in \cite{LaurentShaw15} 
that the Dolbeault cohomology with smooth coefficients
on $\T$ is non-Hausdorff. 

On the other hand,
since $\T$ is pseudoconvex, we have from the  Dolbeault  theorem that 
\begin{equation}\label{eq:Dolbeault} 
H^{0,1} (\T) = 0\end{equation}
where  $H^{0,1}(\T)$ denotes the Dolbeault cohomology 
with smooth  $C^\infty(\T)$ coefficients.
Furthermore, there do exist
{\it almost} smooth solutions to the $\db$  
problem on the Hartogs triangle:
  For every $k\in \NN$ and $0<\alpha<1$, let 
$C^{k,\alpha}(\T)$ denote the H\"older continuous 
function spaces of order $k$, $\alpha$. 
    Let $H^{p,q}_{C^{k,\alpha}}(\T)$ denote the Dolbeault cohomology of 
$(p,q)$-forms  with 
$C^{k,\alpha}(\T)$ coefficients. Using the
integral kernel method, it  is proved in \cite{CC} that   
 $$H^{0,1 }_{C^{k,\alpha}}(\T) =0.$$ 
 Notice that the intersection  
$\cap_{k}C^{k,\alpha}(\T)=C^\infty(\overline{\T})$. 
These results show  the subtlety  of such problems on the Hartogs triangle.   
 

\smallskip
 We can also consider the de Rham complex  $d$ on $\T$ instead of $\db$. 
 Let $d$ and  $d_s$ denote the weak and strong maximal extensions from $L^2_{q}(\T)$ to $ L^2_{q+1}(\T).$   
 Consider the $d$-Laplacian 
  $$\Delta=d d^*+ d^*d: L^2_q(\T)\dashrightarrow L^2_q(\T),$$
  where $0\le q\le 4$. 
Similarly, we can consider $\Delta_s=d_sd_s^*+d_s^*d_s$.
We  refer to the paper by H\"ormander  (see \cite{Hormander03}) 
for a historical overview  of the Hodge theorem for domains 
with smooth boundary. The Hodge theorem on Lipschitz domains 
in $\mathbf R^n$ was studied in~\cite{MMS}

 \begin{problem} On the Hartogs triangle $\T$, determine
 \begin{itemize} 
 \item     if the  Hodge theorem holds for $\triangle$ (or $\triangle_s$);
 \item   if the spectrum of $\triangle$ (or $\triangle_s$) 
on forms is discrete;
\item   if   $d=d_s$.  
 \end{itemize}
 \end{problem}
 
Notice that on functions we have   
$d=d_s:L^2(\T)\dashrightarrow L^2_1(\T)$,
since smooth functions are dense in
$\text{Dom}(d) = W^1(\T)$ by Corollary~\ref{co:Sobolev}.
 We can also show that   $d=d_s:L^2_3(\T)\dashrightarrow L^2_4(\T)$ by using   arguments similar to the proof of Proposition \ref{prop:W=S 2}.  
It is not known 
if $d=d_s$  for other degrees.  We  refer the reader to 
\cite{Hormander65} for the identity of weak and strong extensions 
of  general systems of first-order differential operators on
 Lipschitz domains. 

\smallskip
The Neumann problem is the
natural boundary value problem for  
$\Delta : L^2(\T) \dashrightarrow L^2(\T)$
on functions, where $\Delta=d^*d$.
By definition, $u\in Dom(\Delta)$ if and only if
$du\in\Dom(d^*)$, i.e., if there exists
some $f\in L^2(\T)$ such that 
 \begin{equation}\label{eq:Neumann} 
(du, dv)= (f,v) \qquad \text{for all } 
v\in W^1(\T).  
\end{equation}
By taking $v$ to be a smooth test function on $\T$,
we see that $\Delta u=f$ in the sense of distributions.
Moreover, any $f\in\Range(\Delta)$ satisfies $(f, 1)=0$.

Corollary \ref{co:Poincare} directly yields the solution 
of the Neumann problem, by providing
for each $f\in L^2( \T)$ with $(f,1)=0$ a
unique $u\in W^1(\T)$ such that
Eq.~\eqref{eq:Neumann} holds.

To see this, consider the closed subspace
$$ V:=\left\{v\in W^1(\T) \ \big\vert \ (v,1)=0\right\}\,,$$
equipped with the inner product $Q(u,v):=(d u, d v)$.
By the Poincar\'e inequality (Corollary~\ref{co:Poincare}), 
$Q$ is positive definite, hence an inner product on $V$,
and the resulting norm $Q(v,v)^{\frac12}$ is equivalent to
the $W^1$-norm.  The map $v\mapsto (f,v)$ defines a continuous 
linear form on $V$. Since $V$ is a Hilbert space, there exists a unique
$u\in  V$ such that 
$$(f,v)=Q(u,v)=(d u,d v),\quad v\in V\,.
$$
Since $(f,1)=0$ by assumption,  this holds also
for $v=1$, proving
Eq.~\eqref{eq:Neumann}.

For $f\in L^2(\Omega)$, let $f_a$ be the average
of $f$ over $\T$.  The operator  $G_N:L^2(\T)\to L^2(\T)$ that
maps $f$ to the  unique solution of the Neumann
problem $\Delta u=(f\!-\!f_a)$
is called the {\em Neumann operator} on $L^2(\T)$.
Since $\Range(G_N)\subset W^1(\T)$, the
Rellich lemma implies that $G_N$ is compact.
Its spectrum consists of a sequence
of eigenvalues $\mu_j$ of finite multiplicity,
with $\mu_j\to 0$.
Its the eigenvalues are positive (except for the
simple eigenvalue at zero), and $L^2(\T)$ has an orthonormal
basis of eigenvectors.
This implies that $\Delta$ has discrete spectrum 
$\lambda_j=\frac 1\mu_j\to\infty$ on $L^2(\T)$.

We also know that $\Delta=d d^*$ on the top degree ($q=4$) 
has discrete spectrum since it corresponds 
to the Dirichlet problem.  However, it is not known if 
$\Delta=dd^*+d^* d$ on $L^2_q(\T)$  
has closed range when $1\le q\le 3$,
and if the de Rham cohomology is represented
by the harmonic forms.

 \begin{problem} Determine the spectrum of the $\db$-Neumann operator 
$$N_1:L^2_{0,1}(\T) \to L^2_{0,1}(\T)\,.
$$
 \end{problem} 
The operator $N_1$ is not compact  on $L^2_{0,1}(\T)$,
since $\T$ is biholomorphic to a product domain 
(see \cite{CS}). 
  It is not known whether
the spectrum consists of a sequence of eigenvalues
(of possibly infinite multiplicity),
or if continuous spectrum may be present. 
 Since we can express $N_0$ by the formula (see \cite[Theorem 4.4.3]{ChenShaw01})
\begin{equation} N_0= \db^* N_1^2 \db,\end{equation} 
we have  that $N_0:L^2(\T)\to L^2(\T)$ is also non-compact on the orthogonal complement of the Bergman space.

We note that for $q=2$,
the operator $N_2: L^2_{0,2}(\T)\to L^2_{0,2}(\T)$ is compact since 
it corresponds to the Dirichlet problem.  Thus the spectrum
of $N_2$ is discrete.  

\bibliography{survey}

\begin{thebibliography}{XXX}

\bibitem{ChordArc2017} 
J. Azzam, S. Hofmann, J. M. Martell, K. Nystr\"om and T. Toro,
 \emph{A new characterization of chord-arc domains},
J. Eur. Math. Soc. \textbf{19} (2017), 967-981.

\bibitem{CLS}
D.~Chakrabarti, C.~Laurent-Thi{\'e}baut and M.-C. Shaw, \emph{On the
  ${L}^2$-{D}olbeault cohomology of annuli},  
Indiana Univ. Math. J.  \textbf{67} (2018), 831--857.

\bibitem{CS} D. Chakrabarti and M.-C. Shaw, \emph{The Cauchy-Riemann 
equations on product domains}, 
Math. Ann.   \textbf{349} (2011), 977--998

\bibitem{ChakrabartiShaw12} \bysame 
\emph{$L^2$ {S}erre duality on domains in complex manifolds and applications},
Trans. Amer. Math. Society \textbf{364} (2012), 3529--3554.	

\bibitem{ChakrabartiShaw13}\bysame, 
\emph{Sobolev regularity of the $\db$-equation on the Hartogs triangle}, 
Math. Ann.  \textbf{356} (2013), 241-258.	

\bibitem{ChakrabartiShaw15}\bysame, 
\emph{The $L^2$-cohomology of a bounded smooth Stein domain 
is not necessarily Hausdorff}, 
Math. Ann. \textbf{363} (2015), 1001-1021.	

\bibitem{ChakrabartiZeytuncu16} D. Chakrabarti and Y. Zeytuncu \emph{$L^p$ mapping properties of the Bergman projection on the Hartogs triangle},
Proc. Amer. Math. Soc. \textbf{144} (2016), 1643-1653.


\bibitem{ChenMcNeal20} L. Chen and J. D. McNeal \emph{A solution operator for $\db$ on the Hartogs triangle and $L^p$ estimates}, 
Math. Ann.  {\textbf 376} (2020), 407-430.

\bibitem{ChenShaw01} 
S.-C. Chen and M.-C. Shaw, 
Partial Differential Equations in Several Complex Variables.
AMS/IP Studies in Advanced Mathematics, vol. 19, International Press, 2001.
	
\bibitem{CC}  J. Chaumat  and A.-M. Chollet,  
\emph{ R\'egularit\'e h\"old\'erienne de l'op\'erateur $\db$ 
sur le triangle de Hartogs}, 
Ann. Inst. Fourier (Grenoble),  \textbf{41}  (1991),  867--882. 

\bibitem{Evans-Gariepy} 
L. C. Evans and R. F. Gariepy, 
Measure Theory and Fine Properties of Functions.
Studies in Advanced Mathematics, CRC Press, 1992.
	
\bibitem{FLS17} 
S. Fu,  C. Laurent-Thi\'ebaut and M.-C. Shaw,  
\emph{Hearing pseudoconvexity in Lipschitz domains with holes with $\db$},  
Math. Zeit. \textbf{287}  (2017),   1157-1181.

\bibitem{GGLV20} A.-K. Gallagher, P. Gupta, L. Lanzani and L. Vivas, \emph{Hardy spaces for 
a class of singular domains}, Math. Zeit. \textbf{299}  (2021),   2171-2197.
  
\bibitem{GO1979} 
F. W. Gehring and B. G. Osgood 
\emph{Uniform domains and quasihyperbolic metric},
J. Anal. Math. \textbf{36} (1979), 50--74.

\bibitem{Harrington09}
P. S. Harrington, \emph{Sobolev estimates for the 
{C}auchy-{R}iemann complex on $C^1$ pseudoconvex domains}, 
Math. Zeit.  \textbf{262} (2009), 199--217.

\bibitem{Hormander61} 
L.  H{\"{o}}rmander,  \emph{Weak and strong extension of 
differential  operators}, 
Comm.  Pure and Applied Math. \textbf{14} (1961), 371--379.  

\bibitem{Hormander65} \bysame,  
\emph{{$L^{2}$} estimates and existence
theorems for the  $\overline\partial$ operator}, 
Acta Math.  \textbf{113} (1965), 89--152.
	
\bibitem{Hormander03}\bysame 
\emph{A history of existence theorems 
for the  Cauchy-Riemann complex in $L^2$ spaces},
J. Geometric Analysis, \textbf{13} (2003),  229--357.
	
\bibitem{Hormander04} \bysame, 
\emph{The null space of the $\db$-Neumann operator}, 
Ann. Inst. Fourier (Grenoble) \textbf{54} (2004), 1305--1369.

\bibitem{JK1982} D. Jerison and C. Kenig 
\emph{Boundary behavior of 
harmonic functions in non-tangentially accessible domains} 
Adv. in Math.  \textbf{46} (1982), 80--147.  
	 
\bibitem{Jones81} P. W. Jones, 
\emph{Quasiconformal mappings and extendability of functions in 
Sobolev spaces}, 
Acta Math.  \textbf{147} (1981), 71--88.

\bibitem{Jonsson79} A. Jonsson, 
{\em The trace of potentials on general sets}, 
Ark. Mat.  \textbf{17} (1979) No.~1, 1--18.
      
\bibitem{Kohn63} J. J. Kohn, \emph{Harmonic integrals on 
strongly pseudo-convex manifolds,~{I}},  
Ann. of Math. (2) \textbf{78} (1963), 112--148.
	
\bibitem{Kohn64} \bysame, 
\emph{Harmonic integrals on 
strongly pseudo-convex manifolds,~{II}}, 
Ann. of Math. (2) \textbf{79} (1964), 450--472.
	
\bibitem{Kohn73} \bysame,  
\emph{Global regularity for $\db$ 
on weakly pseudoconvex manifolds}, 
Trans. Amer. Math. Soc., \textbf{181} (1973), 273--292.
 
\bibitem{LaurentShaw13} C.~Laurent-Thi{\'e}baut and M.-C. Shaw, 
\emph{On the {H}ausdorff property of
some {D}olbeault cohomology groups}, 
Math. Zeit. \textbf{274} (2013), 1165--1176.
  
\bibitem{LaurentShaw15} \bysame 
\emph{Non-closed range property for the Cauchy-Riemann operator}.
Analysis and Geometry,  
Springer Proc. Math. Stat. \textbf{127} (2015),  207--218.
 
\bibitem{LaurentShaw19} \bysame, 
\emph{Solving  $\db$ with prescribed support on 
Hartogs triangles  in $\mathbf C^2$ and $\mathbf{CP}^2$}, 
Trans. Amer. Math. Soc. \textbf{271} (2019), 6531--6546. 
 
\bibitem{MS79} O. Martio and J. Sarvas, 
\emph{Injectivity theorems in plane and space}, 
Ann. Acad. Sci. Fenn. Ser. A I Math. \textbf{4} (1979) No.~2, 383--401.
 
 \bibitem{MMS} D. Mitrea, M. Mitrea and M.-C. Shaw, 
\emph{Traces of differential forms on Lipschitz domains, the 
boundary de Rham complex, and Hodge decompositions},
Indiana Univ. Math. J. \textbf{57}  (2008), 2061--2095.
 
 \bibitem{Morrey66} C. B. Morrey, 
Multiple Integrals in the Calculus of Variations.
Springer Verlag, 1966. 
  
\bibitem{RieszNagy} F. Riesz and B. Sz.-Nagy, 
Functional Analysis. 
Dover Publication Inc., 1990. 
  
\bibitem{Shaw85} M.-C. Shaw, 
\emph{Global solvability and regularity for $\db$ on an annulus 
between two weakly pseudoconvex domains}, 
Trans. Amer. Math. Soc.~\textbf{291}(1985), 255--267.

\bibitem{Shaw10} \bysame, 
\emph{The closed range property for $\bar\partial$ on domains
with pseudoconcave boundary}.
Complex Analysis:  Several 
complex variables and connections with PDEs and geometry 
(Fribourg 2008), P. Ebenfelt, N. Hungerbuhler, J. Kohn, 
N. Mok, E. Straube (Eds), in the series: Trends in Mathematics, 
Springer, 2010, 307--320.

\bibitem{Shaw15}\bysame 
\emph{The Hartogs triangle in complex analysis}.
Geometry and topology of submanifolds and currents, 105--115.
Contemp. Math., 646, Amer. Math. Soc., 2015. 
  
\bibitem{Sibony} N.~Sibony, 
\emph{Prolongement des fonctions holomorphes born{\'e}es et
  m{\'e}trique de {C}arath{\'e}odory}, 
Invent. Math. \textbf{29} (1975), 205--230.
  
\bibitem{Stein70} E. M.  Stein,  
Singular Integrals and Differentiability
Properties of Functions. 
Princeton Math. Series, No.  30, Princeton University Press, 1970.  

 \bibitem{Trapani86} S. Trapani, \emph{Inviluppi di olomorfia et gruppi di
     coomologia di Hausdorff}, Rend. Sem. Mat. Univ. Padova
   \textbf{75} (1986), 25--37.


 \bibitem{Treves67} F. Treves,   
Topological vector spaces, distributions and kernels.
Academic Press, New York-London, 1967.

   
    
\bibitem{Vai1988} J. V\"ais\"al\"a \emph{Uniform domains}, Tohoku Math. J. 
\textbf{40} 1988, 101-118. 
	
\end{thebibliography}
\providecommand{\bysame}{\leavevmode\mathbf Tox
	to3em{\hrulefill}\thinspace}

\end{document}